\setlist[enumerate]{label=(\roman*)}
\newcommand\dd{\mathop{}\!\mathrm{d}}
\newcommand\N{\mathbb{N}}
\newcommand\R{\mathbb{R}}
\newcommand\DD{\mathcal{D}}
\newcommand\GG{\mathcal{G}}
\newcommand\HH{\mathcal{H}}
\newcommand\KK{\mathcal{K}}
\newcommand\LL{\mathscr{L}}
\newcommand\QQ{\mathcal{Q}}
\newcommand\RR{\mathcal{R}}
\newcommand\TT{\mathcal{T}}
\newcommand\XX{\mathcal{X}}
\newcommand\Id{\mathrm{Id}}
\newcommand\cl{\mathrm{cl}}
\renewcommand\ker{\mathrm{ker}}
\newcommand\range{\mathrm{range}}
\newcommand\supp{\mathrm{supp}}
\newcommand\SSS{\mathcal{S}}
\newcommand\dualspace{^\star}
\newcommand{\weakly}{\rightharpoonup}
\newcommand{\D}{D}
\newcommand{\oset}[3][0ex]{%
  \mathrel{\mathop{#3}\limits^{
    \vbox to#1{\kern-2\ex@
    \hbox{$\scriptstyle#2$}\vss}}}}
\newcommand{\compactly}{\oset{\textup{c}}{\hookrightarrow}}
\newcommand{\WOT}{\smash{\overset{\normalfont\textsc{wot}}{\to}}}
\DeclareMathOperator{\dist}{dist}
\DeclarePairedDelimiter\parens()
\DeclarePairedDelimiter\bracks[]
\DeclarePairedDelimiter\abs{\lvert}{\rvert}
\DeclarePairedDelimiter\norm{\lVert}{\rVert}
\DeclarePairedDelimiterX\innerp[2](){#1,#2}
\DeclarePairedDelimiterX\dual[2]{\langle}{\rangle}{#1,#2}
\providecommand\given{\nonscript\;\delimsize|\nonscript\;\mathopen{}}
\DeclarePairedDelimiterX\set[1]\{\}{#1}
\crefname{assumption}{Assumption}{Assumptions}
\crefname{ALC@unique}{Step}{Steps}
\definecolor{darkgreen}{rgb}{0,0.5,0}
\definecolor{darkred}{rgb}{0.8,0,0}
\newcommand{\stkout}[1]{\ifmmode\text{\sout{\ensuremath{#1}}}\else\sout{#1}\fi}
\patchcmd{\@addmarginpar}{\ifodd\c@page}{\ifodd\c@page\@tempcnta\m@ne}{}{}
\DeclareRobustCommand{\[}{\begin{equation*}}
\DeclareRobustCommand{\]}{\end{equation*}}
\title{Energy Space Newton Differentiability for Solution Maps of Unilateral and Bilateral Obstacle Problems%
% \thanks{Submitted to the editors DATE.}
}
\author{Constantin Christof\thanks{%
Technische Universit\"at M\"unchen,
Faculty of Mathematics, M17,
85748 Garching bei M\"unchen, Germany,
\url{https://www-m17.ma.tum.de/Lehrstuhl/ConstantinChristof},
\email{christof@ma.tum.de}%
}%
\and
Gerd Wachsmuth\thanks{%
Brandenburgische Technische Universit\"at Cottbus--Senftenberg, 
Institute of Mathematics, 
03046 Cott\-bus, Germany, 
\url{https://www.b-tu.de/fg-optimale-steuerung},
\email{gerd.wachsmuth@b-tu.de},
ORCID: \href{https://orcid.org/0000-0002-3098-1503}{0000-0002-3098-1503}%
}%
}
\begin{document}

\maketitle

\begin{abstract}
We prove that the solution operator of the classical unilateral obstacle problem 
on a nonempty open bounded set $\Omega \subset \R^d$, $d \in \N$,
is
Newton differentiable as a function from $L^p(\Omega)$ to $H_0^1(\Omega)$ whenever
$\max(1, 2d/(d+2)) < p \leq \infty$.
By exploiting this Newton differentiability property, 
results on angled subspaces in $H^{-1}(\Omega)$,
and a formula for orthogonal projections onto direct sums,
we further show that the solution map of the classical bilateral obstacle problem 
is Newton differentiable as a function from $L^p(\Omega)$ to $H_0^1(\Omega)\cap L^q(\Omega)$
whenever 
$\max(1, d/2) < p \leq \infty$ and $1 \leq q <\infty$. For both the unilateral and the bilateral case, we provide explicit formulas for the 
Newton derivative. As a concrete application example for our results, 
we consider the numerical solution of an optimal control problem with 
 $H_0^1(\Omega)$-controls and box-constraints by means of a semismooth Newton method. 
\end{abstract}

\begin{keywords}
obstacle problem, 
bilateral constraints, 
variational inequality, 
semismoothness, 
Newton differentiability, 
optimal control, 
control constraints,
nonsmoothness, 
Newton method
\end{keywords}

\begin{AMS}
35J86, 49J52, 49K20, 46G05, 49M15
\end{AMS}

%%%%%%%%%%%%%%%%%%%%%%%%%%%%%%%%%%%%%%%%%%%%%%
%%%%%%%%%%%%%%%%%%%%%%%%%%%%%%%%%%%%%%%%%%%%%%

\section{Introduction and summary of results}
\label{sec:1}
This paper is concerned with Newton differentiability properties of 
solution maps
$S \colon H^{-1}(\Omega) \to H_0^1(\Omega)$, \mbox{$u \mapsto y$,}
of obstacle-type variational inequalities (VIs) of the form 
\begin{equation*}
\label{eq:VI}
\tag{\textup{VI}}
y \in K,\qquad 
\dual*{-\Delta y   - u}{ v - y }_{H_0^1(\Omega)} \geq 0\qquad \forall v \in K.
\end{equation*}
Here, $\Omega \subset \R^d$, $d \in \mathbb{N}$, is a
nonempty open bounded set; $H_0^1(\Omega)$, $H^{-1}(\Omega)$, 
$\langle \cdot,\cdot\rangle_{H_0^1(\Omega)}$, and
$-\Delta$ are defined as usual
(see \cref{sec:2}); and $K$ is a nonempty set of the 
type
\begin{align}
\label{def:K}
K := \{v \in H_0^1(\Omega) \mid \psi \leq v \leq \phi \text{ a.e.\ in }\Omega\}
\end{align}
involving measurable functions $\psi, \phi\colon \Omega \to [-\infty, \infty]$.
The main contributions of this work, that can be seen as a continuation of 
our previous study \cite{ChristofWachsmuth2023}, are as follows:
First, we prove that the 
Newton differentiability result in \cite[Corollary~3.9]{ChristofWachsmuth2023},
which yields that the solution map $S\colon u \mapsto y$
of \eqref{eq:VI} 
is Newton differentiable as a function from 
$L^p(\Omega)$ to $L^q(\Omega)$
for certain $p$ and $q$ when $\phi \equiv \infty$ holds, 
can be improved
to obtain that 
$S$ is even Newton differentiable 
as a function from $L^p(\Omega)$ to $H_0^1(\Omega)$
whenever $\max(1, 2d/(d+2)) < p \leq \infty$ and $\phi \equiv \infty$.
Second, we show that this energy space Newton differentiability result for the unilateral 
case 
makes it possible to prove that the solution map of the bilateral obstacle problem, i.e.,
the problem \eqref{eq:VI} with $\psi \not \equiv -\infty$ and $\phi \not \equiv \infty$,
is Newton differentiable as a function from $L^p(\Omega)$ to $H_0^1(\Omega) \cap L^q(\Omega)$
whenever $\max(1,d/2) < p \leq \infty$ and $1 \leq q < \infty$. 
For the precise statements of these two main results (involving 
all assumptions and also the definitions of the respective Newton derivatives),
we refer the reader to \cref{sec:4,sec:6}.

Before we begin with our analysis, 
let us briefly put our work into perspective. 
For the sake of brevity, we focus on
infinite-dimensional problems. For additional information on the finite-dimensional setting,
we refer the reader to
 \cite{Facchinei2003,Izmailov2014,Mifflin1977,Mifflin1977-2,OutrataBook1998} and the references therein.

In optimization and optimal control, 
the concept of Newton differentiability (a.k.a.\ semismoothness)
plays a crucial role because it characterizes 
nonsmooth functions that 
are well behaved to such a degree that 
classical algorithms for the numerical solution of 
smooth minimization problems and operator equations become applicable.
See, e.g., \cite{Bartels2015,ChenNashedQi2000,Hintermueller2002,Mifflin1977,SchrammZowe1992,Ulbrich2011} 
for various contributions on this topic. Most prominently, 
in the presence of Newton differentiability, 
it is possible to generalize the classical Newton algorithm 
to the so-called \emph{semismooth Newton method}
which, under suitable invertibility assumptions 
on the involved generalized derivatives,
makes it possible
to solve nonsmooth operator equations 
with superlinear convergence speed.
In practice, the main difficulty in applying such a semismooth Newton method 
is proving that the nonsmooth operators under consideration 
indeed possess a Newton derivative.
In fact, establishing Newton differentiability results for infinite-dimensional functions 
is so hard that there are only a handful of results available on this topic. 
The one that is most commonly used in applications is 
the fact that nonsmooth superposition operators 
\mbox{$F\colon L^p(\Omega) \to L^q(\Omega)$,} $v \mapsto f(v)$, 
that are
induced by a globally Lipschitz continuous and Newton differentiable 
function $f\colon \R \to \R$, are Newton differentiable 
when $p$ and $q$ satisfy $1\leq q < p\leq \infty$ (a so-called norm gap)
and the Newton derivative of $F$ is constructed from that of $f$ by means 
of a pointwise-a.e.\ selection. 
We refer to, e.g.,
\cite{Reyes2005,Hintermueller2008,Roesch2011,Schiela2008,Ulbrich2011},
where this Newton differentiability result 
is used for the numerical solution of optimal control problems with $L^2$-box-constraints, 
and also the recent \cite{BrokateUlbrich2022}. 
For functions that are not of superposition type (or, more precisely, 
for which superposition operators are not the sole source of nonsmoothness), 
Newton differentiability results are very scarce. To the best of our knowledge,
there are only two papers containing genuine contributions on this topic. The first one, 
\cite{Brokate2020}, establishes the Newton differentiability of the solution mapping 
of a prototypical rate-independent evolution variational inequality---the so-called scalar stop operator---%
by means of a semi-explicit solution formula;
see \cite[Theorem 7.15]{Brokate2020}. The second one is our previous 
contribution \cite{ChristofWachsmuth2023},
which
demonstrates by means of pointwise convexity properties that 
solution maps of unilateral obstacle problems are Newton differentiable as 
functions between suitable Lebesgue spaces when the 
strong-weak Bouligand differential is used as a generalized derivative; 
see \cite[Theorem~2.12, Corollary~3.9]{ChristofWachsmuth2023}. 
Compare also again with \cite{BrokateUlbrich2022} in this context. 
Note that all of these existing contributions have in common that they rely
on pointwise properties of the considered nonsmooth operators---either arising
from a superposition structure or a certain curvature behavior---and 
thus only yield Newton differentiability results in Lebesgue spaces. 

In the present paper, we demonstrate how this limitation can be overcome. 
More precisely, we show how the fact that the set $K$ in \eqref{def:K} is polyhedric
(in the sense of \cite{Haraux1977})
can be exploited to prove that the solution operator of the unilateral obstacle problem 
is Newton differentiable as a function with values in the energy space $H_0^1(\Omega)$;
see \cref{th:semismoothH01unilateral}. We moreover demonstrate that this 
refined Newton differentiability result for the unilateral case 
and properties of projections onto angled subspaces in $H^{-1}(\Omega)$
make it possible to establish
the Newton differentiability of the solution map of the bilateral obstacle problem 
as a function from $L^p(\Omega)$ to $H_0^1(\Omega) \cap L^q(\Omega)$ 
for all $\max(1,d/2) < p \leq \infty$ and $1 \leq q < \infty$;
see \cref{th:semismoothH01bilateral}. We would like to point out that the latter of these two main results
is completely beyond the scope of the aforementioned pointwise approaches for proving 
Newton differentiability properties as the solution map of the bilateral obstacle problem 
neither has a superposition structure nor possesses pointwise-a.e.\ curvature properties 
that could be used to resort to the scalar situation. 
At least to the best of our knowledge, 
\cref{th:semismoothH01unilateral}, \cref{th:semismoothH01bilateral},
and the arguments that we employ for the proofs of these main results
are new and 
have no analogue in the literature. 
We remark that we restrict our attention to the classical obstacle problem 
\eqref{eq:VI} in our analysis mainly for simplicity and to avoid obscuring the basic ideas of our approach with technicalities. 
We expect that the techniques that we develop in the following 
can be extended rather straightforwardly to, e.g., Signorini-type VIs and thin-obstacle problems. 
Compare also with the general functional-analytic setting considered in \cite{ChristofWachsmuth2023} in this regard. 

We conclude this introduction with an overview of the content and the structure of 
the remainder of the paper. 
\Cref{sec:2,sec:3} are concerned with preliminaries.
Here, we fix the notation, introduce basic concepts, 
and collect known results on the obstacle problem \eqref{eq:VI}
that are needed for our analysis. 
In \cref{sec:4}, we prove the first main result of this work---the $H_0^1(\Omega)$-Newton differentiability
of the solution map $S\colon u \mapsto y$ of \eqref{eq:VI} in the unilateral case $\phi \equiv \infty$;
see \cref{th:semismoothH01unilateral}.
\Cref{sec:5} contains several auxiliary results on 
orthogonal projections and angled subspaces 
that are needed for the analysis of the 
bilateral case $\psi \not \equiv -\infty$ and $\phi \not \equiv \infty$.
The results in this section 
may also be of independent interest; see, e.g., 
 the formula for orthogonal projections onto direct sums
 in \cref{thm:some_invertibility}. 
 \Cref{sec:6} is concerned 
with the proof of our second main result---the Newton differentiability of the solution map
of the bilateral obstacle problem as a function from
 $L^p(\Omega)$ to $H_0^1(\Omega) \cap L^q(\Omega)$ 
for all $\max(1,d/2) < p \leq \infty$ and $1 \leq q < \infty$;
see \cref{th:semismoothH01bilateral}. 
In \cref{sec:7}, we conclude the paper with an explicit application example. 
Here, we show that \cref{th:semismoothH01bilateral} makes it possible to prove
that optimal control problems with $H_0^1(\Omega)$-controls and box-constraints 
can be solved by a semismooth Newton method in function space with superlinear convergence speed.
The results on this topic are also accompanied by 
numerical experiments.

%%%%%%%%%%%%%%%%%%%%%%%%%%%%%%%%%%%%%%%%%%%%%%
%%%%%%%%%%%%%%%%%%%%%%%%%%%%%%%%%%%%%%%%%%%%%%

\section{Notation and basic concepts}%
\label{sec:2}%
Throughout this paper,
we use the standard symbols 
$\|\cdot\|_X$, 
$\langle \cdot, \cdot \rangle_X$,
and 
$(\cdot, \cdot)_X$
to denote 
norms,
dual pairings, and 
inner products defined on a (real) vector space $X$,
 respectively. 
The space of linear and continuous functions
between two normed spaces $(X, \|\cdot\|_X)$
and $(Y, \|\cdot\|_Y)$ is denoted by $\LL(X,Y)$.
For the topological dual space, we write $X\dualspace := \LL(X,\R)$,
and for the orthogonal complement of a closed
subspace $W$ in a Hilbert space $H$, we write $W^\perp$.
The kernel and the range of a function $G \in \LL(X,Y)$
are denoted by $\ker(G)$ and $\range(G)$, respectively.
The identity map is denoted by $\Id$.
If a space
$(X, \|\cdot\|_X)$ is continuously embedded into a space $(Y, \|\cdot\|_Y)$, 
then we write $X \hookrightarrow Y$.
If the embedding is compact, 
then this is denoted by $X \compactly Y$.
Weak and strong convergences are denoted by 
$\weakly$ and $\to$, respectively. Given a sequence 
$\{G_n\} \subset \LL(X,Y)$, we say that 
$G_n$ converges in the weak operator topology (WOT) to $G \in \LL(X,Y)$,
in symbols $G_n \WOT G$, 
if $G_n h \weakly Gh$ holds in $Y$ for all $h\in X$. 

A mapping $F$ from $X$ to the power set of $Y$
is denoted by $F\colon X \rightrightarrows Y$.
 Recall that a function 
$F\colon X \to Y$ between two normed spaces $(X, \|\cdot\|_X)$
and $(Y, \|\cdot\|_Y)$
is called directionally differentiable if, 
for all $x, h \in X$, there exists $F'(x;h) \in Y$ (the directional derivative)
such that
\begin{align*}
\lim_{0 < t \to 0}\frac{F(x + t h) - F(x)}{t} = F'(x;h).
\end{align*}
If $F'(x;\cdot)$ is an element of $\LL(X,Y)$, then 
$F$ is called Gâteaux differentiable at $x$, $F'(x) := F'(x;\cdot) \in \LL(X,Y)$
is called the Gâteaux derivative of $F$ at $x$,
and $x$ is called a Gâteaux point of $F$.
The set of all Gâteaux points of a function $F\colon X \to Y$ 
is denoted by 
$\DD_F \subset X$. 
A directionally differentiable function $F\colon X \to Y$ satisfying 
\[
\{h_n\} \subset X,
\{t_n\} \subset (0, \infty),
h_n \to h,
t_n \to 0
\quad
\Rightarrow
\quad 
\frac{F(x + t_n h_n) - F(x)}{t_n} \to F'(x;h)
\]
for all $x,h \in X$ is called Hadamard directionally differentiable. 
Following \cite{Christof2018,Rauls2020}, 
we 
define the strong-weak Bouligand differential $\partial_B^{sw}F\colon X \rightrightarrows \LL(X,Y)$
of a function $F\colon X \to Y$ by
\begin{align*}
\partial_B^{sw}F(x)
:=
\set[\big]{
G \in \LL(X,Y)
\given \exists \{x_n\} \subset \DD_F \colon x_n \to x \text{ in }X,~ 
F'(x_n) \WOT G \text{ in } \LL(X,Y)}.
\end{align*}
A function $F\colon X \to Y$ 
equipped with a mapping
$\D F\colon X \rightrightarrows \LL(X,Y)$
satisfying $DF(x) \neq \emptyset$ for all $x \in X$
is called Newton differentiable if
\begin{equation}
\label{eq:NewtonDifDef}
\sup_{G \in \D F(x + h)} 
\frac{\|F(x + h) - F(x) - Gh\|_{Y}}{\|h\|_{X}} \to 0
\quad\text{ for } 0 <  \|h\|_{X} \to 0
\end{equation}
for all $x \in X$;
cf.\ \cite{ChenNashedQi2000,Hintermueller2002,Ulbrich2011}. In this case, $\D F$ is called a
Newton derivative of $F$. 

Given a closed convex nonempty subset $L$ of a space $(X,\|\cdot\|_X)$
and $x \in L$,
we denote by $\RR(x;L) := \R_+ (L - x)$ the radial cone of $L$ at $x$
and by $\TT(x;L) := \cl(\RR(x;L))$ the tangent cone of $L$
at $x$; cf.\ \cite[section 2.2.4]{BonnansShapiro2000}. 
Here, $\cl(\cdot)$ denotes a closure in $X$.

Given a nonempty open bounded set $\Omega \subset \R^d$, $d \in \N$,
we denote 
by  $L^p(\Omega)$, $H^k(\Omega)$, and $W^{k,p}(\Omega)$,
$k \in \N$, $p \in [1,\infty]$,
the standard real Lebesgue and Sobolev spaces on $\Omega$, respectively. 
The norms $\|\cdot\|_{L^p(\Omega)}$,
$\|\cdot\|_{H^k(\Omega)}$, and $\|\cdot\|_{W^{k,p}(\Omega)}$
are defined as usual; see \cite[chapter 5]{Attouch2006}. 
Analogously to \cite[Definition 5.1.4]{Attouch2006},
we define $H_0^1(\Omega)$
to be the closure of the set $C_c^\infty(\Omega)$ of smooth functions 
with compact support on $\Omega$ in $H^1(\Omega)$.
We equip the space $H_0^1(\Omega)$
with the energy norm of \eqref{eq:VI}, i.e.,
\begin{equation}
	\label{eq:norm_in_H01}
	\norm{v}_{H_0^1(\Omega)} 
	:=
	\left (
	\int_\Omega \abs{\nabla v}^2 \dd x \right )^{1/2}
	\qquad
	\forall v \in H_0^1(\Omega).
\end{equation}
Here, $|\cdot|$ denotes the Euclidean norm and $\nabla$ is the weak gradient. 
The associated inner product is denoted by \smash{$\innerp{\cdot}{\cdot}_{H_0^1(\Omega)}$}.
Due to Poincaré's inequality,
the norm $\norm{\cdot}_{H_0^1(\Omega)}$  is 
equivalent to $\norm{\cdot}_{H^1(\Omega)}$
on $H_0^1(\Omega)$. For the topological dual space of 
$(H_0^1(\Omega), \|\cdot\|_{H_0^1(\Omega)})$, we use the symbol \smash{$H^{-1}(\Omega)$}.
The Laplacian
$-\Delta \colon H_0^1(\Omega) \to H^{-1}(\Omega)$
is defined by
\begin{equation}
\label{eq:LaplaceRiesz}
	\dual{-\Delta v}{w}_{H_0^1(\Omega)}
	:=
	\innerp{v}{w}_{H_0^1(\Omega)}
	=
	\int_\Omega \nabla v \cdot \nabla w \dd x
	\qquad
	\forall v,w \in H_0^1(\Omega).
\end{equation}
Due to \eqref{eq:norm_in_H01} and \eqref{eq:LaplaceRiesz},
$(-\Delta)^{-1}$ is precisely the Riesz map
of $H_0^1(\Omega)$.
In particular, $-\Delta$ is an isometric isomorphism
between $H_0^1(\Omega)$ and $H^{-1}(\Omega)$.
This also implies that the inner product on $H^{-1}(\Omega)$
associated with the norm $\|\cdot\|_{H^{-1}(\Omega)}$ is given by 
\begin{equation}
\label{eq:H-1scalarproduct}
	\innerp{z_1}{z_2}_{H^{-1}(\Omega)}
	=
	\innerp{(-\Delta)^{-1} z_1}{(-\Delta)^{-1} z_2}_{H_0^1(\Omega)}
	\qquad
	\forall z_1, z_2 \in H^{-1}(\Omega).
\end{equation}
For the spaces of continuous functions 
on $\Omega$ and $\bar\Omega := \mathrm{cl}(\Omega)$, we use
the symbols $C(\Omega)$ and $C(\bar\Omega)$, respectively. We further define
$C_0(\Omega):=\{v \in C(\bar\Omega) \mid v=0 \text{ on }\partial \Omega\}$, where $\partial \Omega$ denotes the 
boundary of $\Omega$.
Recall that $C(\bar\Omega)$ and $C_0(\Omega)$ are Banach when endowed 
with the maximum norm $\|\cdot\|_\infty$. 
Given an element $v$ of $C(\Omega)$, 
we denote by $\{v = c\}$, $c \in \R$, the level set $\{x \in \Omega \mid v(x) = c\}$.
Analogous abbreviations are also used for sublevel, superlevel, and coincidence sets.
For the indicator function of a set $D \subset \Omega$,
we use the notation $\mathds{1}_{D}\colon \Omega \to \{0,1\}$,
and for the Euclidean distance to $D$,
we write $\dist(\cdot, D)$.
Note that
further symbols etc.\ are introduced in the following sections wherever necessary. 
This additional notation is clarified on its first appearance. 

\section{Preliminaries on the obstacle problem}
\label{sec:3}
In this section, we collect basic results on 
the obstacle problem \eqref{eq:VI}.
Throughout \cref{sec:3},
we utilize the following assumptions. 

\begin{assumption}[standing assumptions for \cref{sec:3}]~
\begin{itemize}
\item $\Omega \subset \R^d$, $d \in \N$, is a nonempty, open, and bounded set;
\item $\psi, \phi\colon \Omega \to [-\infty, \infty]$ are measurable functions such that $K \neq \emptyset$.
\end{itemize}
\end{assumption}

We begin with
a basic solution result for \eqref{eq:VI}.

\begin{proposition}[solvability and Lipschitz continuity]%
\label{prop:solvability}%
For all $u \in H^{-1}(\Omega)$,
the variational inequality \eqref{eq:VI} possesses a unique solution $S(u) := y \in H_0^1(\Omega)$.
The solution map $S\colon H^{-1}(\Omega) \to H_0^1(\Omega)$, $u \mapsto y$, 
of \eqref{eq:VI} satisfies
\begin{equation}
\label{eq:SLipschitzH01}
\norm*{
S(u_1) - S(u_2)
}_{H_0^1(\Omega)}
\leq
\|u_1 - u_2\|_{H^{-1}(\Omega)}\qquad \forall u_1, u_2 \in H^{-1}(\Omega).
\end{equation}
\end{proposition}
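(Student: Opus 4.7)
The plan is to recognize that, under the identification provided by the Riesz-map formula \eqref{eq:LaplaceRiesz}, the variational inequality \eqref{eq:VI} is precisely the metric projection problem for $(-\Delta)^{-1} u$ onto $K$ in the Hilbert space $(H_0^1(\Omega), \innerp{\cdot}{\cdot}_{H_0^1(\Omega)})$. Indeed, rewriting \eqref{eq:VI} using \eqref{eq:LaplaceRiesz} yields the equivalent formulation
\[
y \in K, \qquad \innerp{y - (-\Delta)^{-1} u}{v - y}_{H_0^1(\Omega)} \geq 0 \qquad \forall v \in K,
\]
which is the standard characterization of the projection of $(-\Delta)^{-1} u$ onto $K$. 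To make this rigorous, I first have to verify that $K$ is a nonempty, closed, and convex subset of $H_0^1(\Omega)$. Nonemptiness is assumed, convexity is immediate from the pointwise definition in \eqref{def:K}, and closedness follows by extracting a pointwise a.e.\ convergent subsequence from any $H_0^1(\Omega)$-convergent sequence in $K$ (via Rellich--Kondrachov) and passing to the limit in the inequalities $\psi \leq v \leq \phi$.

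Having set this up, I would then write $S(u) = P_K\bigl((-\Delta)^{-1} u\bigr)$, where $P_K$ denotes the $H_0^1(\Omega)$-projection onto $K$. Existence and uniqueness of $S(u)$ are immediate consequences of the Hilbert projection theorem, and the Lipschitz estimate \eqref{eq:SLipschitzH01} follows at once from the nonexpansiveness of $P_K$ combined with the fact---already noted in \cref{sec:2}---that $-\Delta$ is an isometric isomorphism between $H_0^1(\Omega)$ and $H^{-1}(\Omega)$, so that $\norm{(-\Delta)^{-1}(u_1 - u_2)}_{H_0^1(\Omega)} = \norm{u_1 - u_2}_{H^{-1}(\Omega)}$.

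As a more self-contained alternative, the Lipschitz bound can be derived directly by testing the variational inequality satisfied by $y_i := S(u_i)$ with $v := y_{3-i} \in K$ for $i = 1, 2$, adding the two resulting inequalities, and invoking \eqref{eq:LaplaceRiesz} to identify the quadratic term with $\norm{y_1 - y_2}_{H_0^1(\Omega)}^2$; the duality bound $\dual{u_1 - u_2}{y_1 - y_2}_{H_0^1(\Omega)} \leq \norm{u_1 - u_2}_{H^{-1}(\Omega)} \norm{y_1 - y_2}_{H_0^1(\Omega)}$ on the right-hand side then yields \eqref{eq:SLipschitzH01} after dividing by $\norm{y_1 - y_2}_{H_0^1(\Omega)}$ (which is trivial when this quantity vanishes). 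I do not anticipate any genuine obstacle: this proposition is a classical cornerstone of variational inequality theory, and the point of stating it here is really to pin down notation and, in particular, to record that the Lipschitz constant is exactly $1$---a fact that will be used repeatedly throughout the subsequent Newton differentiability analysis.
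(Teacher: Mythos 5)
Your proposal is correct and follows precisely the classical argument that the paper outsources to its cited references (Rodrigues, Theorem~4-3.1, and Kinderlehrer--Stampacchia, Theorem~II-2.1); indeed, the paper records the key observation---that $S$ is, modulo the Riesz isomorphism, the metric projection in $(H_0^1(\Omega), \norm{\cdot}_{H_0^1(\Omega)})$ onto $K$---in the sentence immediately following the proposition. Both of your routes (Hilbert projection theorem, and the direct test with $v = y_{3-i}$) are standard and sound; the only cosmetic remark is that closedness of $K$ needs just the continuous embedding $H_0^1(\Omega) \hookrightarrow L^2(\Omega)$, so invoking Rellich--Kondrachov is slightly more than necessary.
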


\begin{proof}
See \cite[Theorem~4-3.1]{Rodrigues1987} or \cite[Theorem II-2.1]{KinderlehrerStampacchia1980}. 
\end{proof}

Note that $S$ is---modulo the Riesz isomorphism---precisely the metric projection 
in $(H_0^1(\Omega), \|\cdot\|_{H_0^1(\Omega)})$ onto the set $K$. 
For $L^p$-inputs, we also
have the following Lipschitz continuity result for the mapping $S$.

\begin{proposition}[$L^p$-Lipschitz estimate]%
\label{prop:LpLipschitz}%
Let $p > \max(1, 2d/(d+2))$ and define
\begin{align}
\label{eq:Qpdef}
\QQ_p:=
\begin{cases}
[1,\infty] & \text{ if }  p > \frac{d}{2},
\\
\left [1, \parens*{\frac1p - \frac2d}^{-1}\right )& \text{ if }  p \leq \frac{d}{2},
\end{cases}
\end{align}
with the convention $0^{-1} := \infty$ in the case  $p = d/2$.
Then, for every $q \in \QQ_p$, there exists a constant $C > 0$ such that 
\begin{align}
\label{eq:LPLipschitzEstimate}
\norm*{
S(u_1) - S(u_2)
}_{L^q(\Omega)}
\leq
C
\|u_1 - u_2\|_{L^{p}(\Omega)}\qquad \forall u_1, u_2 \in L^p(\Omega).
\end{align}
\end{proposition}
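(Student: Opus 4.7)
The plan is to combine the comparison principle for the two variational inequalities satisfied by $y_i := S(u_i)$, $i=1,2$, with a classical Stampacchia-type level-set iteration to upgrade the $H_0^1$-Lipschitz bound of \cref{prop:solvability} to an $L^p \to L^q$ estimate covering the entire range $\QQ_p$ (in particular, reaching $q = \infty$ when $p > d/2$, which is beyond what a pure Sobolev embedding would give).

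\textbf{Step 1 (comparison inequality).} First, set $w := y_1 - y_2$ and $g := u_1 - u_2$. For each $k \geq 0$, I would test the VI for $y_1$ with $v_1 := y_1 - (w-k)^+ = \min\{y_1, y_2 + k\}$ and the VI for $y_2$ with $v_2 := y_2 + (w-k)^+ = \max\{y_2, y_1 - k\}$. Both functions belong to $H_0^1(\Omega)$, and a short case-distinction shows $v_1, v_2 \in K$: on the set $\{w > k\}$ one has $y_2 + k < y_1 \leq \phi$, which forces $v_1 \leq \phi$, and symmetrically $\psi \leq y_2 < y_1 - k$ forces $v_2 \geq \psi$ there. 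Adding the two resulting inequalities and using \eqref{eq:LaplaceRiesz} produces the Caccioppoli-type estimate
\begin{equation*}
\int_{\{w > k\}} |\nabla w|^2 \dd x
\;\leq\;
\int_\Omega g\,(w-k)^+ \dd x
\qquad \forall k \geq 0,
\end{equation*}
together with the symmetric estimate for $w^-$ obtained by interchanging the roles of $u_1$ and $u_2$.

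\textbf{Step 2 (Stampacchia iteration).} Next, I would apply the standard Stampacchia truncation argument (see, e.g., \cite[Ch.~II.B, Appendix~B]{KinderlehrerStampacchia1980} or \cite[Ch.~4]{Rodrigues1987}) to this family of inequalities. Combining the Sobolev embedding $H_0^1(\Omega) \hookrightarrow L^{2^*}(\Omega)$ (with $2^* := 2d/(d-2)$ for $d \geq 3$, and $L^r$ for all finite $r$, resp.\ $L^\infty$, for $d=2$, resp.\ $d=1$) with Hölder's inequality on the right-hand side yields a decay of the form $|A_k|^{1/q} \leq C \|g\|_{L^p}\,(h-k)^{-1} |A_h|^{\beta}$ for $h > k \geq 0$ and $A_k := \{w > k\}$, with an exponent $\beta > 1$ precisely when $q \in \QQ_p$; the standard Stampacchia lemma then gives $\|w^+\|_{L^q(\Omega)} \leq C \|g\|_{L^p(\Omega)}$. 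The same bound holds for $w^-$, and summing yields \eqref{eq:LPLipschitzEstimate}. The assumption $p > \max(1, 2d/(d+2))$ is exactly what is needed for the Hölder/Sobolev matching in the first step of the iteration (equivalently, for $L^p \hookrightarrow H^{-1}$), and the dichotomy in the definition \eqref{eq:Qpdef} of $\QQ_p$ reflects whether the iteration terminates in finitely many steps (giving $q = \infty$ when $p > d/2$) or only in a limiting exponent $(1/p - 2/d)^{-1}$.

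\textbf{Main obstacle.} The technical heart of the argument is the verification that $v_1, v_2 \in K$ in the \emph{bilateral} case, because the upper bound $v_1 \leq \phi$ is not obvious when $k > 0$: it is rescued only by the pointwise observation made above, which is specific to the min/max-truncations and not to arbitrary cutoffs. Once the comparison inequality of Step~1 is established, the Stampacchia iteration of Step~2 is entirely classical, and the only bookkeeping is to track how the parameters $p$ and $d$ interact in the embedding used at the initialization.
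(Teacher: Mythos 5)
Your proposal takes essentially the same route as the paper: the paper simply appeals to Sobolev embeddings and classical truncation arguments, citing the unilateral case in \cite[Lemma 3.7]{ChristofWachsmuth2023} and asserting that the bilateral case is analogous. Your Step~1 correctly identifies the bilateral test functions $v_1 = \min\{y_1, y_2 + k\}$ and $v_2 = \max\{y_2, y_1 - k\}$, verifies $v_1, v_2 \in K$, and derives the Caccioppoli inequality, after which Step~2 is the classical Stampacchia iteration (modulo a cosmetic index slip in the stated decay recurrence, where $|A_h|$ should appear on the left and $|A_k|$ on the right for $h > k$, with exponents governed by $2^*$ and $p'$ rather than the target $q$).
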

\begin{proof}
This result follows from the Sobolev embeddings and 
classical truncation arguments.
See, e.g., \cite[Lemma~3.7]{ChristofWachsmuth2023}
where the assertion of the proposition is proved in the unilateral case.
The proof for the bilateral case is completely analogous. 
\end{proof}

Recall that $p > \max(1, 2d/(d+2))$  implies $L^p(\Omega) \compactly H^{-1}(\Omega)$
by the Sobolev embeddings.
Thus, $S(u_i) \in H_0^1(\Omega)$ in \cref{prop:LpLipschitz} is well defined for $u_i \in L^p(\Omega)$.
Note further that the functions $S(u_i)$ in \eqref{eq:LPLipschitzEstimate}
might not belong to $L^q(\Omega)$ due to the low regularity of
$\phi$ and $\psi$. Only the difference $S(u_1) - S(u_2)$ is known to be in $L^q(\Omega)$;
cf.\ \cite[Lemma 3.7]{ChristofWachsmuth2023} and the comments after \cite[Corollary 4.2]{ChristofWachsmuth2023}.
Next, we state a classical directional differentiability result for \eqref{eq:VI}
that goes back to \cite{Haraux1977,Mignot1976}.

\begin{proposition}[Hadamard directional differentiability]%
\label{prop:hadamarddifferentiability}%
The solution operator 
 $S\colon H^{-1}(\Omega) \to H_0^1(\Omega)$, $u \mapsto y$, 
 of \eqref{eq:VI}  is Hadamard directionally differentiable.
 The directional derivative $\delta := S'(u;h)$ of $S$ at
 a point  $u$ with state $y := S(u)$
 in direction $h$ is uniquely characterized by the variational inequality 
 \begin{equation}
 \label{eq:ddauxprob}
 \delta \in \KK(u),
 	 \qquad 
	 \dual*{-\Delta \delta   - h}{ z - \delta }_{H_0^1(\Omega)} \geq 0
	 \qquad\forall z \in \KK(u).
 \end{equation}
 Here,
 $\KK(u) := \TT(y;K) \cap \ker(\Delta y + u)$
is the so-called \emph{critical cone} associated with $u$.
\end{proposition}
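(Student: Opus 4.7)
The plan is to follow the classical Mignot--Haraux route, in which the polyhedricity of $K$ in $H_0^1(\Omega)$ plays the decisive role. As a starting point, one observes that $\KK(u)$ is a nonempty closed convex cone and that $-\Delta$ is the Riesz isomorphism on $H_0^1(\Omega)$; Stampacchia's theorem therefore already produces a unique $\delta \in \KK(u)$ satisfying \eqref{eq:ddauxprob}, and the remaining work is to identify this $\delta$ with $S'(u;h)$ and to upgrade directional differentiability to the Hadamard sense.

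Fix $u, h \in H^{-1}(\Omega)$, set $y := S(u)$, $y_t := S(u + t h)$, and $\delta_t := (y_t - y)/t$ for $t > 0$. The Lipschitz bound \eqref{eq:SLipschitzH01} gives $\norm{\delta_t}_{H_0^1(\Omega)} \leq \norm{h}_{H^{-1}(\Omega)}$, so along a subsequence $t_n \downarrow 0$ one has $\delta_{t_n} \weakly \delta_*$ in $H_0^1(\Omega)$. Since $y_{t_n}, y \in K$ and $K$ is convex, $\delta_{t_n} \in \RR(y;K)$, and the weak closedness of the closed convex cone $\TT(y;K)$ yields $\delta_* \in \TT(y;K)$. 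Testing the VI for $y$ with $v = y_{t_n}$ gives $\dual{-\Delta y - u}{\delta_{t_n}}_{H_0^1(\Omega)} \geq 0$, whereas testing the VI for $y_{t_n}$ with $v = y$, expanding $y_{t_n} = y + t_n \delta_{t_n}$, and invoking the uniform bound on $\delta_{t_n}$ yields the matching inequality $\dual{-\Delta y - u}{\delta_{t_n}}_{H_0^1(\Omega)} \leq t_n \norm{h}_{H^{-1}(\Omega)}^2$. Passing $n \to \infty$ then forces $\dual{-\Delta y - u}{\delta_*}_{H_0^1(\Omega)} = 0$, so $\delta_* \in \KK(u)$.

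The core step is to verify that $\delta_*$ actually solves \eqref{eq:ddauxprob}, and this is where polyhedricity enters. For obstacle sets of the form \eqref{def:K}, Mignot's classical result yields that, for every $z \in \KK(u)$, there is a sequence $z_k = \lambda_k(v_k - y)$ with $v_k \in K$, $\lambda_k > 0$, $\dual{-\Delta y - u}{v_k - y}_{H_0^1(\Omega)} = 0$, and $z_k \to z$ strongly in $H_0^1(\Omega)$. Testing the VI for $y_{t_n}$ against $v = v_k$, expanding, using the orthogonality identity to cancel the $O(1)$-term, and dividing by $t_n$ leaves
\[
\dual{-\Delta \delta_{t_n} - h}{v_k - y}_{H_0^1(\Omega)} \geq \dual{-\Delta y - u}{\delta_{t_n}}_{H_0^1(\Omega)} + t_n \dual{-\Delta \delta_{t_n} - h}{\delta_{t_n}}_{H_0^1(\Omega)}.
\]
The right-hand side vanishes as $n \to \infty$ by the estimates of the previous paragraph and the $H_0^1$-boundedness of $\delta_{t_n}$, while the left-hand side converges to $\dual{-\Delta \delta_* - h}{v_k - y}_{H_0^1(\Omega)}$. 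Multiplying by $\lambda_k > 0$ and letting $k \to \infty$ yields $\dual{-\Delta \delta_* - h}{z}_{H_0^1(\Omega)} \geq 0$ for every $z \in \KK(u)$. Combining this with the weak-lower-semicontinuity bound $\norm{\delta_*}_{H_0^1(\Omega)}^2 \leq \liminf \norm{\delta_{t_n}}_{H_0^1(\Omega)}^2 \leq \dual{h}{\delta_*}_{H_0^1(\Omega)}$, which rewrites as $\dual{-\Delta \delta_* - h}{-\delta_*}_{H_0^1(\Omega)} \geq 0$, produces the VI \eqref{eq:ddauxprob}. By uniqueness of its solution, $\delta_* = \delta$, the whole net $\delta_t$ converges, and the matching upper and lower energy bounds sharpen this convergence to strong convergence in $H_0^1(\Omega)$.

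Hadamard directional differentiability is then a short consequence of \eqref{eq:SLipschitzH01}: for $h_n \to h$ in $H^{-1}(\Omega)$ and $t_n \downarrow 0$,
\[
\norm*{\frac{S(u + t_n h_n) - S(u)}{t_n} - \delta}_{H_0^1(\Omega)} \leq \norm{h_n - h}_{H^{-1}(\Omega)} + \norm*{\frac{S(u + t_n h) - S(u)}{t_n} - \delta}_{H_0^1(\Omega)} \to 0.
\]
The main obstacle I anticipate is the careful polyhedric construction of the approximating sequence $(z_k)$ together with a tight accounting of the $O(t_n)$-remainders that must be discarded after dividing by $t_n$; once that machinery is in place, the remaining ingredients (Stampacchia, weak extraction, the two-sided complementarity estimate, and the Lipschitz-to-Hadamard upgrade) are essentially routine.
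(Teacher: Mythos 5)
Your proposal is correct and is essentially a reconstruction of the classical Mignot--Haraux polyhedricity argument, which is precisely what the paper's one-line proof delegates to via its citations to Mignot (1976), Haraux (1977), and \cite[Corollary~3.4.3]{ChristofPhd2018}. All the key steps---the Lipschitz bound, weak extraction, the two-sided complementarity estimate forcing $\delta_* \in \ker(\Delta y + u)$, the polyhedric approximation of critical-cone elements, the energy identity $\norm{\delta}_{H_0^1(\Omega)}^2 = \dual{h}{\delta}_{H_0^1(\Omega)}$ to upgrade weak to strong convergence, and the Lipschitz-to-Hadamard bootstrap---are carried out correctly.
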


\begin{proof}
See \cite[Corollary 3.4.3]{ChristofPhd2018}
and also the classical works \cite{Haraux1977,Mignot1976}.
\end{proof}

\begin{remark}
The proof of \cref{prop:hadamarddifferentiability} 
relies on the so-called \emph{polyhedricity} of the set $K$,
i.e., the fact that, for all $u \in H^{-1}(\Omega)$ with
state $y  \in H_0^1(\Omega)$, one has
\begin{equation}
\label{eq:polyhedricity}
\cl_{H_0^1(\Omega)}\left ( \RR(y;K) \cap \ker(\Delta y + u) \right )
= \TT(y;K) \cap \ker(\Delta y + u).
\end{equation}

A tangible characterization of the critical cone $\KK(u) = \TT(y;K) \cap \ker(\Delta y + u)$
appearing on the right-hand side of \eqref{eq:polyhedricity}
can be obtained by using the notion of quasi-everywhere
and the fine-support of the multiplier $\Delta y + u \in H^{-1}(\Omega)$.
For details on this topic, we refer to \cite[section~2.2]{Rauls2020},
\cite{Haraux1977,Mignot1976,Wachsmuth2016:2}, 
 and the references therein.
\end{remark}

Using \eqref{eq:ddauxprob},
one can check that $u$ is an element of the set $\DD_S$ of Gâteaux points of
$S\colon H^{-1}(\Omega) \to H_0^1(\Omega)$
if and only if the critical cone $\KK(u)$ is a subspace.
From the Lipschitz estimate \eqref{eq:SLipschitzH01}, 
we obtain that there are indeed many such $u \in H^{-1}(\Omega)$.

\begin{theorem}[Rademacher's theorem]%
\label{th:infRademacher}%
The set $\DD_S$ 
is dense in $H^{-1}(\Omega)$.
\end{theorem}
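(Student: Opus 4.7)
The plan is to reduce this density statement to a standard Rademacher-type theorem for Lipschitz mappings between separable Hilbert spaces. By \cref{prop:solvability}, the map $S$ is globally Lipschitz with constant one from $H^{-1}(\Omega)$ to $H_0^1(\Omega)$, and both of these spaces are separable Hilbert spaces. In particular, the target possesses the Radon--Nikodym property, so the Aronszajn--Christensen theorem applies and produces an Aronszajn-null (equivalently, Haar-null) set $N \subset H^{-1}(\Omega)$ outside of which $S$ is classically Gâteaux differentiable. Since Aronszajn-null sets have empty interior in a separable Banach space, $H^{-1}(\Omega) \setminus N$ is dense in $H^{-1}(\Omega)$.

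The second step is to reconcile the abstract Gâteaux differentiability produced by this theorem with the notion from \cref{sec:2}. By \cref{prop:hadamarddifferentiability}, $S$ is Hadamard directionally differentiable at every $u$, so $S'(u;\cdot)$ exists for all $u \in H^{-1}(\Omega)$, is positively homogeneous, and is Lipschitz (with the Lipschitz constant inherited from $S$). At any point $u \in H^{-1}(\Omega) \setminus N$, the classical two-sided difference quotient converges in every direction, which in particular forces $S'(u;-h) = -S'(u;h)$ for all $h$; combined with positive homogeneity and Lipschitz continuity, this gives $S'(u;\cdot) \in \LL(H^{-1}(\Omega), H_0^1(\Omega))$, i.e., $u \in \DD_S$. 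Chaining both observations shows that $\DD_S \supset H^{-1}(\Omega) \setminus N$ is dense.

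The main difficulty is essentially bookkeeping: one needs an appropriate infinite-dimensional Rademacher-type result to start from. A self-contained alternative, avoiding the Aronszajn--Christensen machinery, is to invoke Mignot's original argument from \cite{Mignot1976}, which shows density of Gâteaux points directly for the metric projection onto a closed convex subset of a Hilbert space. Since $S$ coincides---modulo the isometric Riesz isomorphism $(-\Delta)^{-1}$---with the metric projection in $(H_0^1(\Omega), \|\cdot\|_{H_0^1(\Omega)})$ onto $K$, and such an isomorphism preserves density of Gâteaux points, Mignot's theorem transfers immediately to yield the claim. Either route makes the proof essentially a one-liner once the correct classical black box is identified.
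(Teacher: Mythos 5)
Your proposal is correct and matches the paper's approach in spirit: the paper, too, simply cites an infinite-dimensional Rademacher-type theorem (from Thibault's 1982 work rather than Aronszajn--Christensen or Mignot~\cite{Mignot1976}), so the content is the same modulo which classical black box one invokes. One small caution about your reconciliation paragraph: oddness, positive homogeneity, and Lipschitz continuity of $S'(u;\cdot)$ do \emph{not} by themselves imply linearity of a map between Banach spaces; the argument happens to work here because $S'(u;\cdot)$ is the metric projection onto the convex cone $\KK(u)$ (cf.\ \cref{prop:hadamarddifferentiability}) and an odd projection onto a convex cone forces that cone to be a subspace --- but this entire step is in fact superfluous, since the Aronszajn--Christensen theorem already produces a bounded linear Gâteaux derivative, which is exactly what membership in $\DD_S$ requires in the paper's terminology.
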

\begin{proof}
See \cite[Proposition~1.3, Remark~1.3]{Thibault1982} and the references therein. 
\end{proof}

In tandem with \eqref{eq:SLipschitzH01},
the density of $\DD_S$  in $H^{-1}(\Omega)$ 
implies the following result for the 
strong-weak Bouligand differential 
$\partial_B^{sw}S\colon H^{-1}(\Omega) \rightrightarrows\LL(H^{-1}(\Omega), H_0^1(\Omega))$
of the solution operator $S\colon H^{-1}(\Omega) \to H_0^1(\Omega)$
of \eqref{eq:VI}. 

\begin{lemma}[{boundedness and nonemptyness of $\partial_B^{sw}S(u)$}]%
The set $\partial_B^{sw}S(u)$ is nonempty for all $u \in H^{-1}(\Omega)$
and it holds
\begin{equation}
	\label{eq:estimate_pbsw}
	\norm{G}_{\LL(H^{-1}(\Omega),H_0^1(\Omega))} \le 1
	\quad
	\forall G \in \partial_B^{sw}S(u)\quad \forall u \in H^{-1}(\Omega).
\end{equation}
\end{lemma}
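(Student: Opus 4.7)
The plan is to combine the $1$-Lipschitz property from \eqref{eq:SLipschitzH01} with the density of $\DD_S$ given by \cref{th:infRademacher} and a standard diagonal extraction argument on bounded nets of operators between separable Hilbert spaces. I would proceed in three steps.

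First, I would observe that for every Gâteaux point $u \in \DD_S$, the operator norm of $S'(u) \in \LL(H^{-1}(\Omega), H_0^1(\Omega))$ is bounded by $1$. This is immediate from \eqref{eq:SLipschitzH01}: for arbitrary $h \in H^{-1}(\Omega)$ and $t > 0$ one has $\|S(u + t h) - S(u)\|_{H_0^1(\Omega)} \leq t \|h\|_{H^{-1}(\Omega)}$, and dividing by $t$ and passing to the limit $t \to 0^+$ yields $\|S'(u) h\|_{H_0^1(\Omega)} \leq \|h\|_{H^{-1}(\Omega)}$. Hence $\|S'(u)\|_{\LL(H^{-1}(\Omega), H_0^1(\Omega))} \leq 1$ for all $u \in \DD_S$.

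Next, I would establish nonemptyness of $\partial_B^{sw}S(u)$ for arbitrary $u \in H^{-1}(\Omega)$. By \cref{th:infRademacher}, there exists a sequence $\{u_n\} \subset \DD_S$ with $u_n \to u$. The associated Gâteaux derivatives satisfy $\|S'(u_n)\|_{\LL} \leq 1$ by the first step. Since $H^{-1}(\Omega)$ and $H_0^1(\Omega)$ are separable Hilbert spaces, I would fix a countable dense subset $\{h_k\}_{k \in \N} \subset H^{-1}(\Omega)$ and, using the weak sequential compactness of bounded sets in $H_0^1(\Omega)$ together with a Cantor diagonal argument, extract a subsequence (not relabeled) along which $S'(u_n) h_k$ converges weakly in $H_0^1(\Omega)$ for every $k \in \N$. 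A standard $\varepsilon$/$3$-argument based on the uniform bound $\|S'(u_n)\|_{\LL} \leq 1$ then shows that $\{S'(u_n) h\}$ is weakly Cauchy in $H_0^1(\Omega)$ for every $h \in H^{-1}(\Omega)$. Defining $G h$ as the weak limit, linearity of $G$ and continuity with $\|G\|_{\LL} \leq 1$ follow by passage to the weak limit. This shows $S'(u_n) \WOT G$, so $G \in \partial_B^{sw}S(u)$, which is therefore nonempty.

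Finally, the bound \eqref{eq:estimate_pbsw} is obtained by the same weak lower semicontinuity argument applied to an arbitrary element $G \in \partial_B^{sw}S(u)$. By definition, there is a sequence $\{u_n\} \subset \DD_S$ with $u_n \to u$ and $S'(u_n) \WOT G$, so that $S'(u_n) h \weakly G h$ in $H_0^1(\Omega)$ for every $h \in H^{-1}(\Omega)$. Weak lower semicontinuity of the norm and the first step then yield
\begin{equation*}
\|G h\|_{H_0^1(\Omega)} \leq \liminf_{n \to \infty} \|S'(u_n) h\|_{H_0^1(\Omega)} \leq \|h\|_{H^{-1}(\Omega)},
\end{equation*}
which gives \eqref{eq:estimate_pbsw}. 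The only mildly technical point is the diagonal extraction of a WOT-convergent subsequence in the second step, but this is routine given the separability of $H^{-1}(\Omega)$ and the uniform bound coming from the $1$-Lipschitz property of $S$.
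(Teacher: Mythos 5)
Your proof is correct and follows essentially the same route as the paper: the uniform bound $\|S'(u_n)\|_{\LL(H^{-1}(\Omega),H_0^1(\Omega))} \le 1$ from \eqref{eq:SLipschitzH01}, non-emptiness via density of $\DD_S$ combined with sequential WOT-compactness of the unit ball, and \eqref{eq:estimate_pbsw} from weak sequential lower semicontinuity of the norm. The only difference is that you unpack the WOT-compactness step (Cantor diagonal plus $\varepsilon/3$-argument, using separability of the spaces) rather than invoking the Banach--Alaoglu-type result as the paper does by citation.
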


\begin{proof}
From \eqref{eq:SLipschitzH01}, it follows that 
$\norm{S'(u)}_{\LL(H^{-1}(\Omega),H_0^1(\Omega))} \le 1$
holds for all $u \in \DD_S$. In combination with the Banach--Alaoglu theorem 
for the WOT, \cref{th:infRademacher}, and the definition of  $\partial_B^{sw}S$,
this yields $\partial_B^{sw}S(u) \neq \emptyset$ for all $u \in H^{-1}(\Omega)$;
see \cite[Theorem~2.5, Corollary~2.6]{ChristofWachsmuth2023}. 
To establish \eqref{eq:estimate_pbsw},
it suffices to note that, 
for all $u,h \in H^{-1}(\Omega)$ and all 
$G \in \partial_B^{sw}S(u)$
with associated Gâteaux points $\{u_n\}$ as in the definition of $\partial_B^{sw}S(u)$, we have 
\begin{align*}
	\norm{G h}_{H_0^1(\Omega)}
	\le
	\liminf_{n \to \infty} \norm{S'(u_n) h}_{H_0^1(\Omega)}
	\le
	\norm{h}_{H^{-1}(\Omega)}
\end{align*}
by the weak sequential lower semicontinuity of the norm. 
This completes the proof.~
\end{proof}

The nonemptyness of $\partial_B^{sw}S(u)$ makes 
the strong-weak Bouligand differential a suitable candidate 
for a Newton derivative of $S$; cf.\ 
\cref{th:LPsemismooth,th:semismoothH01unilateral,th:semismoothH01bilateral} below.
As a final preparation for our analysis, 
we note the following.

\begin{lemma}[norm of  derivatives]%
\label{lem:energy}%
Let $u, h \in H^{-1}(\Omega)$ be given. 
\begin{enumerate}
\item\label{lemma:energy:i} The derivative $\delta := S'(u;h)$ of $S$ at $u$ in 
direction $h$ satisfies
\begin{align}
\label{eq:energy1}
	\norm{\delta}_{H_0^1(\Omega)}^2 = \dual{h}{\delta }_{H_0^1(\Omega)}.
\end{align}
\item\label{lemma:energy:ii} For all  $G \in \partial_B^{sw}S(u)$, the function $\delta := Gh$ satisfies 
\begin{align*}
	\norm{\delta}_{H_0^1(\Omega)}^2 \leq  \dual*{h}{\delta}_{H_0^1(\Omega)}.
\end{align*}
\end{enumerate}
\end{lemma}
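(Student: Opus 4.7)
For part \ref{lemma:energy:i}, I would invoke the variational characterization~\eqref{eq:ddauxprob} of the directional derivative $\delta := S'(u;h)$ on the critical cone $\KK(u) = \TT(y;K) \cap \ker(\Delta y + u)$. As the intersection of a tangent cone with a linear subspace, $\KK(u)$ is itself a closed convex cone, so $\delta \in \KK(u)$ implies that $z = 0$ and $z = 2\delta$ are both admissible in~\eqref{eq:ddauxprob}. Inserting these two choices yields the opposite inequalities $\dual{-\Delta\delta - h}{\delta}_{H_0^1(\Omega)} \leq 0$ and $\dual{-\Delta\delta - h}{\delta}_{H_0^1(\Omega)} \geq 0$, respectively; combining them and invoking~\eqref{eq:LaplaceRiesz}, which gives $\dual{-\Delta\delta}{\delta}_{H_0^1(\Omega)} = \|\delta\|_{H_0^1(\Omega)}^2$, directly produces~\eqref{eq:energy1}.

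For part \ref{lemma:energy:ii}, my strategy is to pass to the limit in this identity along a sequence that generates $G$. By the very definition of $\partial_B^{sw}S(u)$, there exists $\{u_n\} \subset \DD_S$ with $u_n \to u$ in $H^{-1}(\Omega)$ and $S'(u_n) \WOT G$ in $\LL(H^{-1}(\Omega), H_0^1(\Omega))$. Setting $\delta_n := S'(u_n)h$ and $\delta := Gh$, the WOT convergence yields $\delta_n \weakly \delta$ in $H_0^1(\Omega)$. Since each $u_n$ is a Gâteaux point of $S$, I can apply part~\ref{lemma:energy:i} at $u_n$ in direction $h$ to obtain $\|\delta_n\|_{H_0^1(\Omega)}^2 = \dual{h}{\delta_n}_{H_0^1(\Omega)}$ for every $n$. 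Sending $n \to \infty$ and combining the weak lower semicontinuity of the norm, $\|\delta\|_{H_0^1(\Omega)}^2 \leq \liminf_{n \to \infty} \|\delta_n\|_{H_0^1(\Omega)}^2$, with the duality-pairing convergence $\dual{h}{\delta_n}_{H_0^1(\Omega)} \to \dual{h}{\delta}_{H_0^1(\Omega)}$ then gives the claimed inequality.

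The argument is essentially mechanical. The only item that deserves a brief verification is that the critical cone in part~\ref{lemma:energy:i} is truly a cone containing both $0$ and $2\delta$, which is immediate from its definition as a tangent cone intersected with a linear subspace. Accordingly, I do not anticipate any serious obstacle.
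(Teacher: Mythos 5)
Your proposal is correct and takes essentially the same approach as the paper: part~\ref{lemma:energy:i} is proved by testing~\eqref{eq:ddauxprob} with $z = 0$ and $z = 2\delta$ (using that $\KK(u)$ is a cone), and part~\ref{lemma:energy:ii} by applying~\ref{lemma:energy:i} along the defining sequence $\{u_n\}$ and passing to the limit via weak sequential lower semicontinuity of the norm together with the convergence of the dual pairing.
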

\begin{proof}
Since $\KK(u)$ is a cone, 
we may choose $z = 0$ and $z = 2\delta$ in \eqref{eq:ddauxprob}. This yields 
$\dual*{-\Delta \delta   - h}{\delta }_{H_0^1(\Omega)} = 0$ and, thus, \eqref{eq:energy1}.
Suppose now that an operator
$G \in \partial_B^{sw}S(u)$ is given and that $\{u_n\} \subset \DD_S$ is a sequence of 
Gâteaux points for $G$ as in the definition of $\partial_B^{sw}S(u)$. In this situation, 
we have $S'(u_n)h \weakly Gh$ in $H_0^1(\Omega)$ and, 
by the weak sequential lower semicontinuity of the norm
and  \ref{lemma:energy:i}, 
\begin{align*}
\norm{G h}_{H_0^1(\Omega)}^2
\leq
\liminf_{n \to \infty}
\norm{S'(u_n) h}_{H_0^1(\Omega)}^2
=
\liminf_{n \to \infty}
 \dual*{h}{S'(u_n)h }_{H_0^1(\Omega)}
 =
 \dual*{h}{G h}_{H_0^1(\Omega)}.
\end{align*}
This establishes the assertion of \ref{lemma:energy:ii} and completes the proof. 
\end{proof}

\begin{remark}
\Cref{lem:energy} relies crucially 
on the polyhedricity of the set $K$.
Indeed, for nonpolyhedric sets, the variational inequality \eqref{eq:ddauxprob}
typically contains an additional curvature term that causes 
\eqref{eq:energy1} to be invalid and whose behavior for varying $u$
is hard to control; see \cite[Theorem 1.4.1(ii)]{ChristofPhd2018}
and \cite[Theorem 4.1]{ChristofWachsmuth2020}.
\end{remark}

\section{Energy space Newton differentiability for the unilateral case}
\label{sec:4}
With the preliminaries in place, 
we are in the position to turn our attention 
to the proof of the first main result of this paper---the 
Newton differentiability of the solution operator of the unilateral 
obstacle problem with values in the energy space $H_0^1(\Omega)$.

\begin{assumption}[standing assumptions for \cref{sec:4}]~
\begin{itemize}
\item $\Omega \subset \R^d$, $d \in \N$, is a nonempty, open, and bounded set;
\item $\psi\colon \Omega \to [-\infty, \infty]$ is measurable, $\phi \equiv \infty$,
and $K \neq \emptyset$.
\end{itemize}
\end{assumption}

The point of departure for our analysis is the following result 
that has recently been obtained in \cite{ChristofWachsmuth2023}.

\begin{theorem}[$L^2$-Newton differentiability for the unilateral obstacle problem]%
\label{th:LPsemismooth}%
The solution map $S$
of \eqref{eq:VI}
is Newton differentiable 
as a function
$S\colon L^p(\Omega) \to L^2(\Omega)$ 
for all $\max(1, 2d/(d+2)) < p \leq \infty$
with Newton derivative $DS := \partial_B^{sw}S$.
\end{theorem}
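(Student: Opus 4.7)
My plan is to argue by contradiction and exploit the norm gap between $L^p(\Omega)$ in the domain and $L^2(\Omega)$ in the image. Assume the conclusion fails at some $u \in L^p(\Omega)$: then there exist $c > 0$, a sequence $\{h_n\} \subset L^p(\Omega) \setminus \{0\}$ with $h_n \to 0$ in $L^p(\Omega)$, and operators $G_n \in \partial_B^{sw} S(u+h_n)$ satisfying $\norm{w_n - \delta_n}_{L^2(\Omega)} \ge c\, t_n$, where $t_n := \norm{h_n}_{L^p(\Omega)}$, $w_n := S(u+h_n) - S(u)$, and $\delta_n := G_n h_n$. Rescaling produces normalized quantities $\hat h_n := h_n/t_n$, $\hat w_n := w_n/t_n$, $\hat \delta_n := \delta_n/t_n$ with $\norm{\hat h_n}_{L^p(\Omega)} = 1$ and $\norm{\hat w_n - \hat \delta_n}_{L^2(\Omega)} \ge c$, and the goal becomes extracting limits in a topology where $\hat w_n - \hat \delta_n \to 0$.

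\textbf{Compactness setup.} Since $p > \max(1, 2d/(d+2))$, the embedding $L^p(\Omega) \compactly H^{-1}(\Omega)$ is compact, so a subsequence of $\{\hat h_n\}$ converges to some $\hat h$ weakly (weakly-$\star$ if $p=\infty$) in $L^p(\Omega)$ and strongly in $H^{-1}(\Omega)$. The Lipschitz bound \eqref{eq:SLipschitzH01} and the operator estimate \eqref{eq:estimate_pbsw} give
\[
\norm{\hat w_n}_{H_0^1(\Omega)} \le \norm{\hat h_n}_{H^{-1}(\Omega)}, \qquad \norm{\hat \delta_n}_{H_0^1(\Omega)} \le \norm{\hat h_n}_{H^{-1}(\Omega)},
\]
so after a further extraction $\hat w_n \weakly \hat w$ and $\hat \delta_n \weakly \hat \delta$ weakly in $H_0^1(\Omega)$. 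The compact embedding $H_0^1(\Omega) \compactly L^2(\Omega)$ upgrades these to strong $L^2$-convergence, reducing the contradiction to showing $\hat w = \hat \delta$.

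\textbf{Identifying the limits.} For $\hat w$, I would apply the Hadamard directional differentiability of $S$ from \cref{prop:hadamarddifferentiability} with step sizes $t_n \to 0$ and directions $\hat h_n \to \hat h$ in $H^{-1}(\Omega)$ to obtain $\hat w_n \to S'(u;\hat h)$ in $H_0^1(\Omega)$, so $\hat w = S'(u;\hat h)$. For $\hat \delta$ the argument is more delicate: each $G_n \in \partial_B^{sw} S(u+h_n)$ is itself a WOT-limit of Gâteaux derivatives $S'(u_{n,k})$ at Gâteaux points $u_{n,k} \to u + h_n$ in $H^{-1}(\Omega)$. A diagonal extraction, combined with the density of $\DD_S$ from \cref{th:infRademacher}, should produce Gâteaux points $\tilde u_n \to u$ in $H^{-1}(\Omega)$ for which $S'(\tilde u_n) \hat h_n$ approximates $\hat \delta_n$ in $L^2(\Omega)$. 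The energy identity in \cref{lem:energy} then applies to $S'(\tilde u_n) \hat h_n$ and, passing to the limit using the strong $H^{-1}$-convergence of $\hat h_n$, yields $\norm{\hat \delta}_{H_0^1(\Omega)}^2 = \dual{\hat h}{\hat \delta}_{H_0^1(\Omega)}$. Together with $\hat \delta \in \KK(u)$, this characterizes $\hat \delta$ as the unique solution of \eqref{eq:ddauxprob} at $u$ in direction $\hat h$, so $\hat \delta = S'(u;\hat h) = \hat w$ and the contradiction closes.

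\textbf{Main obstacle.} The hardest step will be establishing $\hat \delta \in \KK(u)$ in the limit. Each $S'(\tilde u_n)\hat h_n$ lives in the critical cone $\KK(\tilde u_n) = \TT(S(\tilde u_n); K) \cap \ker(\Delta S(\tilde u_n) + \tilde u_n)$ at a different state $S(\tilde u_n)$, and these cones are defined via fine-capacity notions (cf.\ the remark after \cref{prop:hadamarddifferentiability}) that do not obey any naive upper or lower semicontinuity under $H^{-1}$-perturbation of $\tilde u_n$. Overcoming this requires combining the polyhedricity relation \eqref{eq:polyhedricity} with the pointwise monotone structure special to the unilateral case, which forces the coincidence sets associated with $S(\tilde u_n)$ to behave well relative to that of $S(u)$ when the convergence $\tilde u_n \to u$ is strong enough. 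This is precisely the ``pointwise convexity'' ingredient of \cite{ChristofWachsmuth2023}, and without it the entire limit identification---and hence the contradiction---would collapse.
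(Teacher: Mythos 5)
The paper's own ``proof'' of this theorem is merely a citation to \cite[Corollaries 3.9, 4.2]{ChristofWachsmuth2023}, where the result is established via a genuinely pointwise argument exploiting convexity properties of the one-sided constraint, so there is no direct in-text proof to compare your argument against. Your proposed contradiction/compactness strategy is a reasonable attempt at a self-contained proof, and the compactness extractions and the identification $\hat w = S'(u;\hat h)$ via Hadamard directional differentiability are fine. However, the final identification step contains a concrete logical error: you claim that $\hat\delta \in \KK(u)$ together with the energy identity $\norm{\hat\delta}_{H_0^1(\Omega)}^2 = \dual{\hat h}{\hat\delta}_{H_0^1(\Omega)}$ ``characterizes $\hat\delta$ as the unique solution of \eqref{eq:ddauxprob}.'' This is not true. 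The VI \eqref{eq:ddauxprob} requires $\dual{-\Delta\delta - h}{z - \delta}_{H_0^1(\Omega)} \ge 0$ for \emph{all} $z \in \KK(u)$; the energy identity is precisely the consequence of testing with $z = 0$ and $z = 2\delta$ only and is strictly weaker. A two-dimensional example already shows this: with $\KK = [0,\infty)^2 \subset \R^2$ and $h = (1,1)$, both $\delta = (1,1) = S'(u;h)$ and $\delta = (1,0)$ lie in $\KK$ and satisfy $\norm{\delta}^2 = \langle h,\delta\rangle$, yet only the former solves the VI. Thus, even granting everything else, the argument does not pin down $\hat\delta = S'(u;\hat h)$ and the contradiction does not close.

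In addition, the step you flag yourself as ``the hardest''---showing $\hat\delta \in \KK(u)$ in the limit---is not a technical annoyance to be waved at but is exactly the substantive content that \cite{ChristofWachsmuth2023} supplies through its pointwise convexity machinery, and it is not clear that it can be recovered from the abstract $H^{-1}$-convergence alone: the critical cones $\KK(\tilde u_n)$ are defined via fine-capacitary notions and have no useful upper semicontinuity under mere $H^{-1}$-perturbation. Since your proposal defers this step to the cited work and the remaining identification step is incorrect as stated, the proof as given has two genuine gaps; the second of these would require a different argument (e.g.\ establishing the full limiting VI, not just the energy relation) even if the first could be closed.
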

\begin{proof}
See \cite[Corollaries 3.9, 4.2]{ChristofWachsmuth2023}. 
\end{proof}

\begin{remark}
The function 
$S$ is even Newton differentiable with values in $L^q(\Omega)$
for all $q \in \QQ_p \setminus \{\infty\}$ where $\QQ_p$ is the set in \eqref{eq:Qpdef}.
Stating this generalized Newton differentiability result requires some care though
since $S$ might not map into $L^q(\Omega)$
due to the (possibly) low regularity of $\psi$.
The interested reader is referred to
\cite{ChristofWachsmuth2023}. 
\end{remark}

To establish the Newton differentiability of $S$ as a function with 
values in $H_0^1(\Omega)$, we combine 
\cref{th:LPsemismooth}
with \cref{lem:energy}. 

\begin{theorem}[$H_0^1$-Newton differentiability for the unilateral obstacle problem]%
\label{th:semismoothH01unilateral}%
The solution map $S$
of \eqref{eq:VI}
is Newton differentiable 
as a function
$S\colon L^p(\Omega) \to H_0^1(\Omega)$ 
for all $\max(1, 2d/(d+2)) < p \leq \infty$
with Newton derivative $DS := \partial_B^{sw}S$.
\end{theorem}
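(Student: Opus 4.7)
The plan is to argue by contradiction. Suppose that $S$ is \emph{not} Newton differentiable as a map $L^p(\Omega) \to H_0^1(\Omega)$ with $DS := \partial_B^{sw}S$ at some $u \in L^p(\Omega)$. Then there exist $\varepsilon > 0$, a sequence $\{h_n\} \subset L^p(\Omega) \setminus \{0\}$ with $h_n \to 0$ in $L^p(\Omega)$, and operators $G_n \in \partial_B^{sw}S(u + h_n)$ such that, setting $t_n := \norm{h_n}_{L^p(\Omega)}$, $e_n := h_n/t_n$, $\tilde a_n := t_n^{-1}(S(u+h_n) - S(u))$, $\tilde b_n := G_n e_n$, and $\tilde r_n := \tilde a_n - \tilde b_n$, one has $\norm{\tilde r_n}_{H_0^1(\Omega)} \geq \varepsilon$ for every $n$. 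On the other hand, \cref{th:LPsemismooth} implies $\norm{\tilde r_n}_{L^2(\Omega)} \to 0$, so the goal is to extract a contradiction from the gap between the $H_0^1$- and $L^2$-norms.

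Since $p > 2d/(d+2)$ yields the compact embedding $L^p(\Omega) \compactly H^{-1}(\Omega)$, the bounded sequence $\{e_n\}$ admits a subsequence (not relabeled) with $e_n \to e$ strongly in $H^{-1}(\Omega)$. The Hadamard directional differentiability from \cref{prop:hadamarddifferentiability} then provides the strong convergence $\tilde a_n = t_n^{-1}(S(u + t_n e_n) - S(u)) \to S'(u;e)$ in $H_0^1(\Omega)$. Moreover, \eqref{eq:estimate_pbsw} gives $\norm{\tilde b_n}_{H_0^1(\Omega)} \leq \norm{e_n}_{H^{-1}(\Omega)} \leq C$, so, after passing to a further subsequence, $\tilde b_n \weakly \beta$ in $H_0^1(\Omega)$ for some $\beta \in H_0^1(\Omega)$. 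Combining the strong convergence of $\tilde a_n$, the compact embedding $H_0^1(\Omega) \compactly L^2(\Omega)$, and $\norm{\tilde r_n}_{L^2(\Omega)} \to 0$, we identify $\beta = S'(u;e)$.

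The heart of the proof is now to upgrade the weak convergence $\tilde b_n \weakly \beta$ to a strong one in $H_0^1(\Omega)$. For each $n$, \cref{lem:energy}\ref{lemma:energy:ii} applied to $G_n \in \partial_B^{sw}S(u+h_n)$ and direction $e_n$ yields $\norm{\tilde b_n}_{H_0^1(\Omega)}^2 \leq \dual{e_n}{\tilde b_n}_{H_0^1(\Omega)}$. Since $e_n \to e$ strongly in $H^{-1}(\Omega)$ and $\tilde b_n \weakly \beta$ in $H_0^1(\Omega)$, the right-hand side converges to $\dual{e}{\beta}_{H_0^1(\Omega)}$, while the \emph{equality} in \cref{lem:energy}\ref{lemma:energy:i} applied to the Hadamard derivative $\beta = S'(u;e)$ gives $\dual{e}{\beta}_{H_0^1(\Omega)} = \norm{\beta}_{H_0^1(\Omega)}^2$. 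Together with weak lower semicontinuity of $\norm{\cdot}_{H_0^1(\Omega)}$, this shows $\norm{\tilde b_n}_{H_0^1(\Omega)} \to \norm{\beta}_{H_0^1(\Omega)}$ and hence $\tilde b_n \to \beta$ strongly in $H_0^1(\Omega)$. Consequently, $\tilde r_n = \tilde a_n - \tilde b_n \to 0$ in $H_0^1(\Omega)$, contradicting $\norm{\tilde r_n}_{H_0^1(\Omega)} \geq \varepsilon$. The main obstacle is precisely this weak-to-strong upgrade: it exploits the delicate interplay between the inequality in \cref{lem:energy}\ref{lemma:energy:ii} (which is all that is available for elements of $\partial_B^{sw}S$) and the stronger equality in \cref{lem:energy}\ref{lemma:energy:i} for Hadamard derivatives, both of which ultimately rest on the polyhedricity of $K$.
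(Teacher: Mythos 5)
Your proof is correct and follows essentially the same path as the paper's: extract a subsequence with $e_n \to e$ in $H^{-1}(\Omega)$ via the compact embedding, use the $L^2$-Newton differentiability plus Hadamard differentiability to identify the weak $H_0^1$-limit of $G_n e_n$ as $S'(u;e)$, and then upgrade weak to strong convergence in $H_0^1(\Omega)$ by combining the inequality of \cref{lem:energy}\ref{lemma:energy:ii} for $G_n e_n$ with the equality of \cref{lem:energy}\ref{lemma:energy:i} for $S'(u;e)$. The only cosmetic differences from the paper's proof are the contradiction framing (versus a direct subsequence--subsequence argument) and the use of $L^2$-convergence of the residual (rather than its weak $H_0^1$-convergence) to pin down the weak limit; both are immaterial.
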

\begin{proof}
We assume without loss of generality that $p < \infty$. 
(If the claim is proved for such $p$, then the assertion 
for $p=\infty$ follows immediately by H\"older's inequality.)
In order to prove the theorem, 
it suffices to show
that, for all $u \in L^p(\Omega)$ and all
sequences $\{t_n\} \subset (0, \infty)$, $\{h_n\} \subset L^p(\Omega)$, and
$\{G_n\} \subset \LL(H^{-1}(\Omega) , H_0^1(\Omega))$
satisfying $t_n \to 0$, 
$\norm{h_n}_{L^p(\Omega)} = 1$, and $G_n \in \partial_B^{sw}S(u + t_n h_n)$,
we have
\begin{equation}
\label{eq:contra}
\frac{\norm{S(u + t_n h_n) - S(u) - G_n t_n h_n}_{H_0^1(\Omega)}}{\norm{t_n h_n}_{L^p(\Omega)}}
\to
0;
\end{equation}
cf.\ the definition of Newton differentiability in \eqref{eq:NewtonDifDef}.
Due to the boundedness of $\{h_n\}$ in $L^p(\Omega)$
and the compact embedding $L^p(\Omega) \compactly H^{-1}(\Omega)$,
every subsequence of $\{h_n\}$
possesses a further subsequence (not relabeled)
such that
$h_n \weakly h$ in $L^p(\Omega)$ and
$h_n \to h$ in $H^{-1}(\Omega)$
for some $h \in L^p(\Omega)$.
From \cref{th:LPsemismooth},
we further get that 
\begin{align*}
 \frac{\norm{S(u + t_n h_n) - S(u) - G_n t_n h_n}_{L^2(\Omega)}}{\|t_n h_n\|_{L^p(\Omega)}} \to 0,
\end{align*}
and from \eqref{eq:SLipschitzH01} and \eqref{eq:estimate_pbsw}, it follows that there exists a constant $C>0$ satisfying
\begin{align*}
	\norm*{ \frac{S(u + t_n h_n) - S(u)}{t_n} - G_n h_n }_{H_0^1(\Omega)} \leq C.
\end{align*}
Combining these two observations yields
\begin{equation}
\label{eq:randomeq2636}
\frac{S(u + t_n h_n) - S(u)}{t_n} - G_n h_n  \weakly 0 \text{ in } H_0^1(\Omega).
\end{equation}
Due to the convergence $h_n \to h$ in $H^{-1}(\Omega)$ 
and \cref{prop:hadamarddifferentiability},
we know that the first term on the left-hand side of \eqref{eq:randomeq2636} 
converges strongly in $H_0^1(\Omega)$ to $S'(u;h)$.  
Thus,
\begin{equation}
\label{eq:randomeq373zdb48}
G_n h_n  \weakly  S'(u;h) \text{ in } H_0^1(\Omega).
\end{equation}
Define $\delta_n := G_n h_n $, $\delta := S'(u;h)$.
From \cref{lem:energy}, 
we obtain that 
\[
	\norm{\delta_n}_{H_0^1(\Omega)}^2
\leq 
\dual*{h_n}{ \delta_n}_{H_0^1(\Omega)}
~\forall n \in \N
\qquad
\text{and}
\qquad
	\norm{\delta}_{H_0^1(\Omega)}^2
=
\dual*{h }{ \delta}_{H_0^1(\Omega)}.
\]
Due to
the convergence $\delta_n \weakly \delta$ in $H_0^1(\Omega)$
in \eqref{eq:randomeq373zdb48} and $h_n \to h$ in $H^{-1}(\Omega)$,
it follows
\begin{align*}
	\norm{\delta - \delta_n}_{H_0^1(\Omega)}^2
	&=
	\norm{\delta_n}_{H_0^1(\Omega)}^2
	-
	\norm{\delta}_{H_0^1(\Omega)}^2
	+
	2\innerp{\delta}{\delta - \delta_n}_{H_0^1(\Omega)}
	\\
	&\le
	\dual*{h_n}{ \delta_n}_{H_0^1(\Omega)}
	-
	\dual*{h }{ \delta}_{H_0^1(\Omega)}
	+
	2\innerp{\delta}{\delta - \delta_n}_{H_0^1(\Omega)}
	\to
	0
	.
\end{align*}
Thus,
$\delta_n \to \delta$ in $H_0^1(\Omega)$. In summary, we have now shown that 
\[
\frac{S(u + t_n h_n) - S(u)}{t_n} - G_n h_n
=
\frac{S(u + t_n h_n) - S(u)}{t_n}
- S'(u;h)
+ \delta - \delta_n
\to 0
\]
holds in $H_0^1(\Omega)$
for the subsequence $\{h_n\}$ selected after \eqref{eq:contra}.
A standard subsequence-subsequence argument
now shows that \eqref{eq:contra} holds. This completes the proof.
\end{proof}

\begin{remark}
Explicit formulas for certain elements of the strong-weak Bouligand differential
$\partial_B^{sw}S$ of 
the solution map $S$ of the unilateral obstacle problem 
can be found in \cite[Proposition~2.11(i), Theorem~4.3]{Rauls2020}.
Under mild additional assumptions,
a precise characterization of the entire differential $\partial_B^{sw}S$ has been 
obtained in \cite[Theorem 5.6]{Rauls2020}. For a tangible corollary of the results of \cite{Rauls2020}
that is suitable for practical applications, we refer the reader to 
\cref{def:GenDerUni,lem:generalderivativeUni} in \cref{sec:6}.
\end{remark}

\section{Auxiliary results on angled subspaces and orthogonal projections}
\label{sec:5}
The purpose of this section is to establish 
several auxiliary results on angled subspaces and orthogonal projections
that are needed for the analysis of the bilateral obstacle problem in \cref{sec:6}.
For the sake of reusability, we consider a general setting.

\begin{assumption}[standing assumptions for \cref{sec:5}]~
\begin{itemize}
\item $H$ is a real Hilbert space with
closed unit ball $B_H := \{x \in H \mid \|x\|_H \leq 1\}$;
\item $W_1, W_2 \subset H$ are closed subspaces of $H$.
\end{itemize}
\end{assumption}

We start by defining
the minimal angle between $W_1$ and $W_2$;
see \cite{Deutsch1995} and the references therein for more information on this concept.
\begin{definition}[minimal angle]%
	\label{def:angle}%
	The minimal angle
	$\alpha_0(W_1, W_2)$
	between $W_1$ and $W_2$ in $H$
	is defined to be
	the angle in $[0,\pi/2]$
	whose cosine is given by
	\begin{equation*}
		c_0(W_1, W_2)
		:=
		\sup\set[\big]{
			\innerp{x_1}{x_2}_H
			\given
			x_1 \in W_1 \cap B_H,
			x_2 \in W_2 \cap B_H
		}
		\in [0,1].
	\end{equation*}
\end{definition}

It is clear that
$\alpha_0(\cdot,\cdot)$
and
$c_0(\cdot,\cdot)$
are symmetric.
Via a simple scaling argument, it is immediate that
$c_0 = c_0(W_1,W_2)$
is the smallest constant in $[0,1]$ for which
the following sharpened Cauchy--Bunyakovsky--Schwarz inequality holds true:
\begin{align*}
	\abs{\innerp{x_1}{x_2}_H}
	\le
	c_0 \norm{x_1}_H \norm{x_2}_H
	\qquad\forall x_1 \in W_1, x_2 \in W_2.
\end{align*}
Next, we recall \cite[Lemma~10~(3)]{Deutsch1995}.
For convenience, we also include its short proof.
\begin{lemma}%
	\label{lem:projnormH}%
	Denote by $P_i$ the orthogonal projection onto $W_i$, $i=1,2$.
	Then,
	\begin{align*}
		\norm{P_1 P_2}_{\LL(H,H)}
		=
		\norm{P_2 P_1}_{\LL(H,H)}
		=
		c_0(W_1, W_2).
	\end{align*}
\end{lemma}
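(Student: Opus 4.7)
The plan is to prove the two claimed equalities by (i) noting that the two operator norms must agree because orthogonal projections are self-adjoint, so $(P_1 P_2)\adjoint = P_2 P_1$, and (ii) sandwiching the common value of $\norm{P_1 P_2}_{\LL(H,H)}$ between $c_0(W_1,W_2)$ from above and below using standard dual-pairing manipulations.

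For the upper bound, I would rewrite the operator norm via a double supremum:
\begin{equation*}
\norm{P_1 P_2}_{\LL(H,H)}
= \sup_{\norm{x}_H,\norm{y}_H \le 1}
  \innerp{P_1 P_2 x}{y}_H
= \sup_{\norm{x}_H,\norm{y}_H \le 1}
  \innerp{P_2 x}{P_1 y}_H.
\end{equation*}
Since $P_2 x \in W_2$ and $P_1 y \in W_1$ with norms bounded by $\norm{x}_H$ and $\norm{y}_H$ respectively (orthogonal projections are nonexpansive), each admissible pair $(P_2 x, P_1 y)$ contributes an inner product that is by definition dominated by $c_0(W_1,W_2)$. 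Hence $\norm{P_1 P_2}_{\LL(H,H)} \le c_0(W_1,W_2)$.

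For the matching lower bound, I would start from arbitrary $x_1 \in W_1 \cap B_H$ and $x_2 \in W_2 \cap B_H$. Because $P_1 x_1 = x_1$ and $P_2 x_2 = x_2$, one computes
\begin{equation*}
\innerp{x_1}{x_2}_H
= \innerp{P_1 x_1}{P_2 x_2}_H
= \innerp{x_1}{P_1 P_2 x_2}_H
\le \norm{x_1}_H \, \norm{P_1 P_2 x_2}_H
\le \norm{P_1 P_2}_{\LL(H,H)}.
\end{equation*}
Taking the supremum over all such $x_1, x_2$ yields $c_0(W_1,W_2) \le \norm{P_1 P_2}_{\LL(H,H)}$, which closes the estimate and, together with the adjoint argument $\norm{P_1 P_2}_{\LL(H,H)} = \norm{(P_1 P_2)\adjoint}_{\LL(H,H)} = \norm{P_2 P_1}_{\LL(H,H)}$, finishes the proof.

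I do not expect any real obstacle here: the only thing to keep in mind is that orthogonality of the projections (hence self-adjointness and the identity $P_i^2 = P_i$) is used both to identify $(P_1 P_2)\adjoint$ with $P_2 P_1$ and to replace $x_i$ by $P_i x_i$ in the lower-bound step; no additional structure on $W_1, W_2$ (such as closedness of $W_1 + W_2$ or a positive minimal angle) is required for this particular statement.
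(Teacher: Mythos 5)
Your proof is correct and follows essentially the same route as the paper's: both identify $c_0(W_1,W_2)$ with $\norm{P_1 P_2}$ by replacing $x_i \in W_i \cap B_H$ with $P_i x_i$ for $x_i \in B_H$ and then moving a projection across the inner product via self-adjointness. The only cosmetic differences are that you split the identity into a two-sided sandwich rather than a single chain of equalities, and you derive $\norm{P_1 P_2} = \norm{P_2 P_1}$ from $(P_1 P_2)\adjoint = P_2 P_1$ instead of from the symmetry of $c_0(\cdot,\cdot)$.
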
 
\begin{proof}
	We have
	\begin{align*}
		c_0(W_1, W_2)
		&=
		\sup\set[\big]{
			\innerp{x_1}{x_2}_H
			\given
			x_1 \in W_1 \cap B_H, x_2 \in W_2 \cap B_H
		}
		\\
		&=
		\sup\set[\big]{
			\innerp{P_1 x_1}{P_2 x_2}_H
			\given
			x_1 \in B_H, x_2 \in B_H
		}
		\\
		&=
		\sup\set[\big]{
			\innerp{x_1}{P_1 P_2 x_2}_H
			\given
			x_1 \in B_H, x_2 \in B_H
		}
		=
		\norm{P_1 P_2}_{\LL(H,H)}
		.
	\end{align*}
	The symmetry of $c_0(\cdot, \cdot)$
	provides the assertion for $P_2 P_1$.
\end{proof}

The next lemma shows
that the sine of the minimal angle
has a geometric meaning too.
It is a quantitative version of
\cite[Theorem~12 (1)$\Leftrightarrow$(3)]{Deutsch1995}.
\begin{lemma}%
	\label{lem:sine_angle}%
	Denote by $s_0 = s_0(W_1,W_2)$ the largest number in $[0,1]$
	satisfying
	\begin{align*}
		\norm{x_1 + x_2}_H
		\ge
		s_0 \norm{x_1}_H
		\qquad
		\forall x_1 \in W_1, x_2 \in W_2.
	\end{align*}
	Then it holds that
	\begin{equation}
	\label{eq:pythagoras}
		s_0(W_1, W_2)^2 + c_0(W_1, W_2)^2 = 1.
	\end{equation}
	In particular, $s_0(W_1, W_2)$ is the sine of the minimal angle
	$\alpha_0(W_1,W_2)$.
\end{lemma}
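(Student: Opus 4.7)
The plan is to reformulate $s_0$ as an infimum, replace the $W_2$-minimization by a closed-form projection expression, and then identify the resulting supremum with $c_0(W_1,W_2)$ via the Cauchy--Bunyakovsky--Schwarz duality that $c_0$ already encodes.

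First, I would observe that the inequality defining $s_0$ is trivial for $x_1 = 0$ and homogeneous in $x_1$, so
\[
	s_0(W_1,W_2) = \inf\set[\big]{ \norm{x_1 + x_2}_H \given x_1 \in W_1,\ \norm{x_1}_H = 1,\ x_2 \in W_2 },
\]
and the defining set of admissible $s \in [0,1]$ is closed, so the supremum is attained. Next, for each fixed $x_1 \in W_1$ with $\norm{x_1}_H = 1$, the inner infimum over $x_2 \in W_2$ is the squared distance from $x_1$ to $-W_2 = W_2$, which is realized by $x_2 = -P_2 x_1$. By the Pythagorean identity for orthogonal projections, this gives
\[
	\inf_{x_2 \in W_2}\norm{x_1 + x_2}_H^2 = \norm{x_1 - P_2 x_1}_H^2 = \norm{x_1}_H^2 - \norm{P_2 x_1}_H^2 = 1 - \norm{P_2 x_1}_H^2.
\]
Taking the infimum over $x_1$ yields
\[
	s_0(W_1,W_2)^2 = 1 - \sup\set[\big]{ \norm{P_2 x_1}_H^2 \given x_1 \in W_1 \cap B_H }.
\]

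Second, I would identify the remaining supremum with $c_0(W_1,W_2)^2$. Using the duality characterization of the Hilbert norm together with the self-adjointness and idempotency of $P_2$, for $x_1 \in W_1$ one has $\norm{P_2 x_1}_H = \sup\{\innerp{P_2 x_1}{x_2}_H \mid x_2 \in B_H\}$, and since the supremum is attained on $W_2$ (the complementary part contributes zero), $\norm{P_2 x_1}_H = \sup\{\innerp{x_1}{x_2}_H \mid x_2 \in W_2 \cap B_H\}$. Taking the further supremum over $x_1 \in W_1 \cap B_H$ and comparing with \cref{def:angle} gives
\[
	\sup\set[\big]{ \norm{P_2 x_1}_H \given x_1 \in W_1 \cap B_H } = c_0(W_1,W_2).
\]
Combining this with the previous display establishes \eqref{eq:pythagoras}. (Alternatively, one could quote \cref{lem:projnormH} together with $\norm{P_2 x_1}_H = \norm{P_2 P_1 x_1}_H$ for $x_1 \in W_1$.)

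Finally, since $c_0(W_1,W_2) = \cos \alpha_0(W_1,W_2)$ with $\alpha_0 \in [0,\pi/2]$ by \cref{def:angle}, the identity $s_0^2 + c_0^2 = 1$ and $s_0 \in [0,1]$ force $s_0 = \sqrt{1 - \cos^2 \alpha_0} = \sin \alpha_0$, yielding the ``in particular'' statement. The only subtlety in this argument is verifying that restricting the supremum in the representation of $\norm{P_2 x_1}_H$ to the subspace $W_2$ does not decrease it, but this is immediate from $P_2 x_1 \in W_2$ combined with orthogonal decomposition; no genuinely hard step appears.
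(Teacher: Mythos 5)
Your argument is correct, and it is a genuinely different proof from the one in the paper. The paper proves the identity by establishing the two inequalities $s_0 \ge \sqrt{1-c_0^2}$ and $c_0 \le \sqrt{1-s_0^2}$ separately: the first via the sharpened Cauchy--Schwarz inequality and the weighted AM--GM bound $2 c_0 ab \le a^2 + c_0^2 b^2$, the second via the parametrized polarization identity $2\innerp{x_1}{x_2}_H = \norm{t^{-1/2}x_1}_H^2 + \norm{t^{1/2}x_2}_H^2 - \norm{t^{-1/2}x_1 - t^{1/2}x_2}_H^2$ followed by minimization over $t>0$. Both directions are purely algebraic and never invoke the orthogonal projections. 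Your proof instead recognizes $s_0$ as $\inf\{\norm{x_1 + x_2}_H : x_1 \in W_1, \norm{x_1}_H = 1, x_2 \in W_2\}$, computes the inner infimum in closed form as $1 - \norm{P_2 x_1}_H^2$ via the Pythagorean theorem for the projection $P_2$, and then identifies $\sup_{W_1 \cap B_H}\norm{P_2 x_1}_H$ with $c_0$ (which, as you note, is just the content of \cref{lem:projnormH} restricted to $x_1 \in W_1$, since $P_2 x_1 = P_2 P_1 x_1$ there). This is a one-shot geometric argument rather than two separate estimates, and it exhibits \eqref{eq:pythagoras} as a literal Pythagorean decomposition of unit vectors in $W_1$ into their $W_2$-component and its orthogonal complement. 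What the paper's route buys is self-containedness: it proves the lemma by bare-hands inequalities, keeping \cref{lem:projnormH} and \cref{lem:sine_angle} logically independent of each other; your route leans on the Hilbert-space projection theorem and essentially re-derives the relevant half of \cref{lem:projnormH} inline. Both are valid, and yours is arguably the more conceptually transparent of the two. The one point you might state more explicitly is that the reformulation $s_0 = \inf\{\cdots\}$ tacitly assumes $W_1 \neq \{0\}$ (otherwise the infimum is over the empty set); in that degenerate case $c_0 = 0$ and $s_0 = 1$ directly, so the identity is trivially true, but it is worth a parenthetical remark.
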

\begin{proof}
	We set $c_0 := c_0(W_1, W_2)$.
	For all $x_1 \in W_1$, $x_2 \in W_2$,
	we have
	\begin{align*}
		\norm{x_1 + x_2}_H^2
		&=
		\norm{x_1}_H^2 + 2 \innerp{x_1}{x_2}_H + \norm{x_2}_H^2
		\ge
		\norm{x_1}_H^2 - 2 c_0 \norm{x_1}_H \norm{x_2}_H + \norm{x_2}_H^2
		\\
		&\ge
		\norm{x_1}_H^2 - \norm{x_1}_H^2 - c_0^2 \norm{x_2}_H^2 + \norm{x_2}_H^2
		=
		(1 - c_0^2) \norm{x_2}_H^2,
	\end{align*}
	hence $s_0 \ge \sqrt{1 - c_0^2}$.
	Similarly,
	for $x_1 \in W_1 \cap B_H$,
	$x_2 \in W_2 \cap B_H$,
	and $t > 0$,
	we find
	\begin{align*}
		2 \innerp{x_1}{x_2}_H
		&=
		2 \innerp{t^{-1/2} x_1}{t^{1/2} x_2}_H
		=
		\norm{t^{-1/2} x_1}_H^2 + \norm{t^{1/2} x_2}_H^2
		-\norm{t^{-1/2} x_1 - t^{1/2} x_2}_H^2
		\\
		&\le
		t^{-1} \norm{x_1}_H^2 + t \norm{x_2}_H^2
		-s_0^2 t^{-1} \norm{x_1}_H^2
		\leq
		(1 - s_0^2) t^{-1} + t.
	\end{align*}
	Taking
	the supremum over all $x_i \in W_i \cap B_H$, $i=1,2$,
	and
	the infimum over all $t > 0$
	shows that $c_0 \le \sqrt{1 - s_0^2}$.
	This establishes the claim.
\end{proof}

Note that the symmetry
of
$s_0(\cdot, \cdot)$
is not obvious from the definition,
but it holds due to the symmetry of $c_0(\cdot, \cdot)$.
Finally,
we show the invertibility of a certain
operator
that appears in \cref{sec:6}.
\begin{theorem}%
	\label{thm:some_invertibility}%
	Let $P_i$ 
	denote the orthogonal projection onto $W_i$, $i=1,2$.
	Suppose that $W_1$ and $W_2$ enclose a positive angle, i.e., 
	$c_0(W_1,W_2) < 1$.
	Then the operator
	\begin{equation}
	\label{eq:R1def}
		R_1 :=
		\begin{pmatrix}
			\Id & P_1 \\
			P_2 & \Id
		\end{pmatrix}\in \LL( H^2, H^2)
	\end{equation}
	is continuously invertible with
	$\norm{R_1^{-1}}_{\LL(H^2,H^2)} \le 4/(1-c_0(W_1,W_2)) $,
	the subspace $W_1 + W_2$ is closed, and the
	orthogonal projection $P$ onto $W_1 + W_2$ is given by
	\begin{equation}
	\label{eq:Pdef}
		P =
		\begin{pmatrix}
			\Id & \Id
		\end{pmatrix}
		R_1^{-1}
		\begin{pmatrix}
			P_1 \\ P_2
		\end{pmatrix}.
	\end{equation}
\end{theorem}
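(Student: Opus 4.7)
The plan is to exploit the block structure of $R_1$ to reduce its invertibility to a Neumann-series estimate based on \cref{lem:projnormH}, and then derive the closedness of $W_1 + W_2$ and the projection formula \eqref{eq:Pdef} as essentially formal consequences.

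First, I would write $R_1 = \Id + N$ with $N \in \LL(H^2, H^2)$ defined by $N(v_1, v_2) := (P_1 v_2, P_2 v_1)$, so that $N^2(v_1, v_2) = (P_1 P_2 v_1, P_2 P_1 v_2)$. Setting $c_0 := c_0(W_1, W_2)$, \cref{lem:projnormH} gives $\|N^2\|_{\LL(H^2, H^2)} = c_0 < 1$, while $\|N\|_{\LL(H^2, H^2)} \le 1$ is immediate. Grouping even and odd powers, $\|N^{2k}\|_{\LL(H^2, H^2)} \le c_0^k$ and $\|N^{2k+1}\|_{\LL(H^2, H^2)} \le c_0^k$ for all $k \ge 0$, so the Neumann series $R_1^{-1} = \sum_{k \ge 0} (-N)^k$ converges absolutely in $\LL(H^2, H^2)$ with $\|R_1^{-1}\|_{\LL(H^2, H^2)} \le 2\sum_{k \ge 0} c_0^k = 2/(1-c_0) \le 4/(1-c_0)$. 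Equivalently, a direct block-Schur-complement computation yields the explicit representation
\[
R_1^{-1} = \begin{pmatrix} (\Id - P_1 P_2)^{-1} & -P_1 (\Id - P_2 P_1)^{-1} \\ -P_2 (\Id - P_1 P_2)^{-1} & (\Id - P_2 P_1)^{-1} \end{pmatrix},
\]
each block being bounded in norm by $(1-c_0)^{-1}$ via the Neumann series on $\Id - P_1 P_2$ and $\Id - P_2 P_1$.

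Next, for the closedness of $W_1 + W_2$, I would take any sequence $z_n = x_{1,n} + x_{2,n}$ with $x_{i,n} \in W_i$ and $z_n \to z$ in $H$. Applying \cref{lem:sine_angle} to $(x_{1,n} - x_{1,m}) + (x_{2,n} - x_{2,m})$ with $s_0 := \sqrt{1 - c_0^2} > 0$ yields $\|x_{i,n} - x_{i,m}\|_H \le s_0^{-1} \|z_n - z_m\|_H$ for $i \in \{1, 2\}$. Hence each $\{x_{i,n}\}$ is Cauchy and converges to some $x_i \in W_i$ by closedness of $W_i$, so $z = x_1 + x_2 \in W_1 + W_2$.

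Finally, for the projection formula, I would fix $x \in H$, let $P$ be defined by the right-hand side of \eqref{eq:Pdef}, and set $(x_1, x_2)^\top := R_1^{-1}(P_1 x, P_2 x)^\top$. The defining relation $R_1 (x_1, x_2)^\top = (P_1 x, P_2 x)^\top$ reads $x_1 + P_1 x_2 = P_1 x$ and $P_2 x_1 + x_2 = P_2 x$, i.e., $x_1 = P_1(x - x_2) \in W_1$ and $x_2 = P_2(x - x_1) \in W_2$; hence $Px = x_1 + x_2 \in W_1 + W_2$. Applying $P_1$ to $x - Px$ then gives $P_1(x - Px) = P_1 x - x_1 - P_1 x_2 = 0$, and symmetrically $P_2(x - Px) = 0$, so that $x - Px \in W_1^\perp \cap W_2^\perp = (W_1 + W_2)^\perp$, where the last identity uses the closedness established in the previous step. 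This characterizes $P$ as the orthogonal projection onto $W_1 + W_2$. There is no deep conceptual obstacle in the whole argument; the only mildly subtle point is the pairing trick in the Neumann series, which is needed because $\|N\|_{\LL(H^2, H^2)}$ can be equal to $1$ even though $\|N^2\|_{\LL(H^2, H^2)} = c_0 < 1$.
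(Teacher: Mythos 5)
Your proof is correct, but it takes a genuinely different route from the paper on all three parts of the statement. For the invertibility and norm bound, you expand $R_1 = \Id + N$ with $N(v_1,v_2) = (P_1 v_2, P_2 v_1)$, observe that $N^2 = \mathrm{diag}(P_1P_2, P_2P_1)$ has norm $c_0 < 1$ by \cref{lem:projnormH}, and pair even and odd Neumann terms; this even gives the sharper bound $2/(1-c_0)$. The paper instead factorizes $R_1$ via a block Schur complement and estimates each of the three factors crudely, which is how $4/(1-c_0)$ arises. For closedness you give a direct Cauchy-sequence argument from \cref{lem:sine_angle}, whereas the paper obtains closedness for free once it has shown that $P$ is a bounded idempotent self-adjoint operator. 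Most importantly, for the projection formula you avoid verifying $P^2 = P$ and $P^\star = P$ altogether: writing $(x_1,x_2)^\top := R_1^{-1}(P_1 x, P_2 x)^\top$ and unpacking the defining relations directly shows $Px = x_1 + x_2 \in W_1 + W_2$ and $x - Px \in W_1^\perp \cap W_2^\perp = (W_1 + W_2)^\perp$, which characterizes the orthogonal projection once closedness is known. The paper's longer route (idempotence, then self-adjointness via a Neumann series expansion, then $\range(P) = W_1 + W_2$) is less direct, but it is chosen because it produces as a byproduct the symmetric series representation $P = (P_1 + P_2) - (P_1 P_2 + P_2 P_1) + (P_1P_2P_1 + P_2P_1P_2) - \dots$ highlighted in the remark after the theorem, which your approach does not yield.
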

\begin{proof}
	By a standard Schur complement approach, we
	find the factorization
	\begin{align*}
		R_1
		=
		\begin{pmatrix}
			\Id & P_1 \\
			P_2 & \Id
		\end{pmatrix}
		=
		\begin{pmatrix}
			\Id & 0 \\
			P_2 & \Id
		\end{pmatrix}
		\begin{pmatrix}
			\Id & 0 \\
			0 & \Id - P_2 P_1
		\end{pmatrix}
		\begin{pmatrix}
			\Id & P_1 \\
			0 & \Id
		\end{pmatrix}
		.
	\end{align*}
	Since $\norm{P_2 P_1}_{\LL(H,H)} = c_0(W_1,W_2) < 1$ holds by \cref{lem:projnormH},
	it follows from the Neumann series that the operator 
	$\Id - P_2 P_1$
	is invertible with
	$\norm{(\Id - P_2 P_1)^{-1}}_{\LL(H,H)} \le 1/(1 - c_0(W_1,W_2))$.
	This implies that $R_1$ is continuously invertible too with
	\begin{equation}
	\label{eq:R1-1def}
		R_1^{-1}
		=
		\begin{pmatrix}
			\Id & -P_1 \\
			0 & \Id
		\end{pmatrix}
		\begin{pmatrix}
			\Id & 0 \\
			0 & (\Id - P_2 P_1)^{-1}
		\end{pmatrix}
		\begin{pmatrix}
			\Id & 0 \\
			-P_2 & \Id
		\end{pmatrix}
	\end{equation}
	and
\begin{align*}
		\norm{R_1^{-1}}_{\LL(H^2,H^2)}
		\le
		2 \cdot \norm{(\Id - P_2 P_1)^{-1}}_{\LL(H,H)} \cdot 2
		\le
		\frac{4}{1 - c_0(W_1,W_2)}.
\end{align*}
	It remains to check that $W_1 + W_2$ is closed and that 
	the map $P \in \LL(H,H)$ in \eqref{eq:Pdef} is the orthogonal projection
	onto $W_1 + W_2$.
	By combining \eqref{eq:Pdef} with \eqref{eq:R1-1def},
	we get that
	\begin{equation}
	\label{eq:Preform}
	\begin{aligned}
		P
		&=
		\begin{pmatrix}
			\Id & \Id - P_1
		\end{pmatrix}
		\begin{pmatrix}
			\Id & 0 \\
			0 & (\Id - P_2 P_1)^{-1}
		\end{pmatrix}
		\begin{pmatrix}
			P_1 \\ P_2 - P_2 P_1
		\end{pmatrix}
		\\
		&=
		P_1 + (\Id - P_1) (\Id - P_2 P_1)^{-1} P_2 (\Id - P_1).
	\end{aligned}
	\end{equation}
	Since $P_i = P_i^2$ and $P_i(\Id - P_i) = (\Id - P_i)P_i = 0$ for $i=1,2$, \eqref{eq:Preform} yields that 
	\begin{align*}
		P^2
		&=
		P_1
		+ (\Id - P_1) (\Id - P_2 P_1)^{-1} P_2 (\Id - P_1)
		(\Id - P_2 P_1)^{-1} P_2 (\Id - P_1)
		\\
		&=
		P_1
		+ (\Id - P_1) (\Id - P_2 P_1)^{-1} P_2 (\Id - P_2 P_1)
		(\Id - P_2 P_1)^{-1} P_2 (\Id - P_1)
		\\
		&= P
		.
	\end{align*}
	Hence, $P$ is idempotent.
	Next, we note that
	\begin{equation}
	\label{eq:Neumannformula}
		(\Id - P_2 P_1)^{-1}
		=
		\sum_{k=0}^\infty (P_2 P_1)^k
		=
		\Id + P_2 (\Id - P_1 P_2)^{-1} P_1. 
	\end{equation}
	Plugging this series expansion into \eqref{eq:Preform} gives
	\begin{equation}
	\label{eq:randomeq22737}
			P = 
			P_1 + (\Id - P_1) \sum_{k = 0}^\infty (P_2 P_1)^k P_2 (\Id - P_1)
			.
	\end{equation}
	As the orthogonal projections $P_1$ and $P_2$ are self-adjoint, 
	\eqref{eq:randomeq22737} implies that $P$ is self-adjoint too.
	Thus, $P$ is the orthogonal projection onto $\range(P)$ by \cite[Lemma 4]{Deutsch1995}.
	In particular, $\range(P)$ is closed. 
	It remains to show that $\range(P) = W_1 + W_2$.
	From \eqref{eq:Preform}, we obtain that  $P P_1 = P_1$ holds
	and that
	\begin{align*}
		P P_2
		&=
		P_1 P_2 + (\Id - P_1) (\Id - P_2 P_1)^{-1} P_2 (\Id - P_1) P_2
		\\
		&=
		P_1 P_2 + (\Id - P_1) (\Id - P_2 P_1)^{-1} (\Id - P_2 P_1) P_2
		=
		P_2
		.
	\end{align*}
	Hence, $W_1 + W_2 \subset \range(P)$.
	From \eqref{eq:Preform} and \eqref{eq:Neumannformula}, we further obtain that
	\begin{align*}
		P
		&=
		P_1 - P_1 (\Id - P_2 P_1)^{-1} P_2 (\Id - P_1)
		+
		(\Id - P_2 P_1)^{-1} P_2 (\Id - P_1)
		\\
		&=
		P_1 - P_1 (\Id - P_2 P_1)^{-1} P_2 (\Id - P_1)
		+
		\parens*{
			\Id + P_2 (\Id - P_1 P_2)^{-1} P_1
		} P_2 (\Id - P_1)
		\\
		&=
		P_1 - P_1 (\Id - P_2 P_1)^{-1} P_2 (\Id - P_1)
		+
		P_2
		\parens*{
			\Id + (\Id - P_1 P_2)^{-1} P_1 P_2
		} (\Id - P_1).
	\end{align*}
	This shows that $\range(P) \subset W_1 + W_2$.
	Thus, $\range(P) = W_1 + W_2$,
	the operator $P$ is the orthogonal projection onto $W_1 + W_2$,
	and $W_1 + W_2$ is a closed subspace.~
\end{proof}

\begin{remark}~
\begin{itemize}
\item The closedness of the set $W_1 + W_2$ in \cref{thm:some_invertibility}
can also be derived directly from the condition $c_0(W_1,W_2) < 1$; 
cf.\ \cite[Theorem 12]{Deutsch1995}. 
It is interesting to see
that it is also obtained as a byproduct when analyzing the formula \eqref{eq:Pdef}.
\item 
From \eqref{eq:randomeq22737}, we obtain the
series representation
\begin{align*}
P
&=
\sum_{k=0}^\infty
\Big(
P_1 (P_2 P_1)^k
+
(P_2P_1)^k P_2
-
(P_1P_2)^{k+1}
-
(P_2P_1)^{k+1}
\Big )
\\
&=
	\parens{ P_1 + P_2 }
	-
	\parens{ P_1 P_2 + P_2 P_1 }
	+
	\parens{ P_1 P_2 P_1 + P_2 P_1 P_2 }
	-
	\ldots
\end{align*}
for the orthogonal 
projection $P$ onto the sum $W_1 + W_2$ of two 
closed angled subspaces $W_i$, $i=1,2$, in a Hilbert space $H$ 
in terms of the orthogonal projections $P_i$, $i=1,2$,
onto the spaces $W_i$.
Note that, 
in contrast to the formulas in \eqref{eq:Preform} and \eqref{eq:randomeq22737},
this representation is symmetric in $P_1$ and $P_2$.
\end{itemize}
\end{remark}

\section{Energy space Newton differentiability for the bilateral case}
\label{sec:6}

We are now in the position to prove the 
Newton differentiability of the solution operator of \eqref{eq:VI}
in the case $\psi \not \equiv -\infty$ and $\phi \not \equiv \infty$.
For the analysis of this bilateral case, we require slightly more restrictive assumptions 
than in \cref{sec:4}. 

\begin{assumption}[standing assumptions for \cref{sec:6}]~
\label{ass:sec6}
\begin{itemize}
\item $\Omega \subset \R^d$, $d \in \N$, is a nonempty, open, and bounded set;
\item $p > \max(1,d/2)$ is a given, fixed exponent;
\item $\Omega$ is such that it holds 
\begin{equation}
\label{eq:regOmega}
v \in H_0^1(\Omega), -\Delta v \in L^p(\Omega)
\qquad \Rightarrow
\qquad v \in C_0(\Omega);
\end{equation}
\item $\psi$ and $\phi$ satisfy
$\psi, \phi \in C(\bar\Omega) \cap H^1(\Omega)$ and $\Delta\psi, \Delta\phi \in L^p(\Omega)$ (where $\Delta$ is understood distributionally), 
and there exists a number $c > 0$ such that  
\begin{equation}
\label{eq:constant_c}
\psi \leq \phi - c \text{ in } \Omega
\qquad
\text{and}
\qquad
\psi  \leq - c \leq c \leq \phi  \text{ on } \partial \Omega. 
\end{equation}
\end{itemize}
\end{assumption}

\begin{remark}\label{rem:sec6:LipschitzDomain}%
If $\Omega$ is a bounded Lipschitz domain, then 
\eqref{eq:regOmega} holds automatically;
see \cite[Theorem 1.1]{Rehberg2015}. For arbitrary nonempty, open, bounded sets, \eqref{eq:regOmega} may fail
(as one may easily check by means of the example
\mbox{$\Omega = \set{ x \in \R^d \given 0 < |x| < 1}$}, $d > 1$).
\end{remark}

For later reference,
we provide a small lemma.
\begin{lemma}%
	\label{lem:min_in_hnulleins}%
	Let $v \in H_0^1(\Omega)$ be given.
	Then, $\min(v, \phi), \max(v, \psi) \in H_0^1(\Omega)$.
\end{lemma}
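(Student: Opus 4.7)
The plan is to reduce both assertions to showing that certain positive parts lie in $H_0^1(\Omega)$. Writing $\min(v,\phi) = v - (v-\phi)^+$ and $\max(v,\psi) = v + (\psi-v)^+$, and using that $v \in H_0^1(\Omega)$ by assumption, it suffices to prove $(v-\phi)^+ \in H_0^1(\Omega)$ and $(\psi-v)^+ \in H_0^1(\Omega)$. Since $v, \phi, \psi \in H^1(\Omega)$, both positive parts belong to $H^1(\Omega)$ by the standard Stampacchia chain rule; only membership in $H_0^1(\Omega)$ needs to be established.

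For the upper-obstacle part, I would choose an approximating sequence $\{v_n\} \subset C_c^\infty(\Omega)$ with $v_n \to v$ in $H^1(\Omega)$. Continuity of the positive part on $H^1$ then gives $(v_n-\phi)^+ \to (v-\phi)^+$ in $H^1(\Omega)$. The key step is to verify that each $(v_n-\phi)^+$ has support compactly contained in $\Omega$. Since $\phi \in C(\bar\Omega)$ and $\phi \geq c$ on $\partial\Omega$, continuity yields an open set $V \subset \R^d$ containing $\partial\Omega$ on which $\phi \geq c/2$; and because $\operatorname{supp}(v_n) \subset\subset \Omega$, I can shrink $V$ to an open neighborhood $W_n$ of $\partial\Omega$ that is disjoint from $\operatorname{supp}(v_n)$. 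On $W_n \cap \Omega$ one then has $0 = v_n < c/2 \leq \phi$, so $(v_n-\phi)^+$ vanishes there, and its support is a closed subset of $\Omega$ bounded away from $\partial\Omega$, hence compact in $\Omega$. A standard mollification argument then delivers $(v_n-\phi)^+ \in H_0^1(\Omega)$, and closedness of $H_0^1(\Omega)$ in $H^1(\Omega)$ passes to the limit to give $(v-\phi)^+ \in H_0^1(\Omega)$.

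The argument for $(\psi-v)^+ \in H_0^1(\Omega)$ is entirely symmetric, this time exploiting the boundary condition $\psi \leq -c$ on $\partial\Omega$ in \eqref{eq:constant_c} to conclude that on a suitable open neighborhood of $\partial\Omega$ one has $\psi - v_n \leq -c/2 < 0$, so that $(\psi-v_n)^+$ again has compact support in $\Omega$. The main obstacle is that $\Omega$ is only assumed open and bounded, not Lipschitz, so $v$ itself has no trace in the classical sense and one cannot argue directly via boundary values of $v-\phi$; instead, all boundary information has to be transferred onto the smooth compactly supported approximants $v_n$, whose vanishing near $\partial\Omega$ is automatic and can be combined with the pointwise strict sign conditions on $\phi$ and $\psi$ inherited from the constant $c>0$ in \cref{ass:sec6}.
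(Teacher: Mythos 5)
Your proof is correct and follows essentially the same outline as the paper's: approximate $v$ by compactly supported smooth functions, exploit the boundary condition \eqref{eq:constant_c} on $\phi$ (resp.\ $\psi$) to localize the support of the truncated approximants away from $\partial\Omega$, mollify to place them in $H_0^1(\Omega)$, and pass to the limit using closedness of $H_0^1(\Omega)$. The algebraic rewriting $\min(v,\phi)=v-(v-\phi)^+$ is a cosmetic variant of the paper's direct treatment of $\min(v_n,\phi)$.

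The one genuine difference is in the limiting step. You invoke \emph{strong} $H^1$-continuity of the positive-part map to get $(v_n-\phi)^+\to(v-\phi)^+$ in $H^1(\Omega)$. This is true (it is a Marcus--Mizel type result: Lipschitz superposition operators are continuous on $W^{1,p}$ for $p<\infty$), but it is not a consequence of Stampacchia's lemma alone and is substantially less elementary than what the step actually requires. The paper sidesteps it entirely: from $\norm{\min(v_n,\phi)}_{H^1}\le\norm{v_n}_{H^1}+\norm{\phi}_{H^1}$ one gets $H^1$-boundedness, from the Lipschitz estimate $\abs{\min(a,c)-\min(b,c)}\le\abs{a-b}$ one gets $L^2$-convergence, hence weak $H^1$-convergence, and $H_0^1(\Omega)$ being a closed subspace is weakly closed. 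That route uses nothing beyond the triangle inequality and weak compactness. If you want to keep your positive-part decomposition, you should replace the appeal to strong $H^1$-continuity by this cheaper weak-convergence argument; as written, your proof quietly relies on a theorem of comparable depth to the lemma itself.
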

\begin{proof}
	We only prove the claim for $\min(v, \phi)$.
	From Stampacchia's lemma,
	see, e.g., \cite[Theorem 5.8.2]{Attouch2006},
	we get that $\min(v, \phi) \in H^1(\Omega)$.
	By definition, there exists a sequence $\{v_n\} \subset C_c^\infty(\Omega)$
	with $v_n \to v$ in $H^1(\Omega)$.
	By using Stampacchia's lemma again,
	we get $\min(v_n, \phi) \in H^1(\Omega) \cap C_c(\Omega)$.
	A standard mollification argument provides us with
	$\min(v_n, \phi) \in H_0^1(\Omega)$.
	It is easy to check that
	$\min(v_n, \phi) \to \min(v,\phi)$ in $L^2(\Omega)$.
	Further, this sequence is bounded in $H^1(\Omega)$
	which yields
	$\min(v_n, \phi) \weakly \min(v,\phi)$ in $H^1(\Omega)$.
	Since $H_0^1(\Omega)$ is a closed subspace,
	we infer $\min(v,\phi) \in H_0^1(\Omega)$.
\end{proof}

Note that \cref{lem:min_in_hnulleins} implies in particular that $\min(\phi, \max(0,\psi)) \in H_0^1(\Omega)$ holds.
Since $\psi \leq \min(\phi, \max(0,\psi))  \leq \phi$, 
this shows that the admissible set $K$ 
is nonempty in the situation of \cref{ass:sec6} and that the results of
\cref{sec:3,sec:4} are applicable. 

 The basic idea of the following analysis is
 to establish the Newton differentiability of the 
 solution operator of the bilateral obstacle problem 
 by means of  the results for the unilateral case in \cref{sec:4} and an argumentation similar
 to that in the proof of the classical implicit function theorem.
 To pursue this approach, we require some additional notation.
 Henceforth, we denote by 
 $S_\psi$ the solution operator of the obstacle problem \eqref{eq:VI}
 with lower obstacle $\psi$ and upper obstacle $\infty$,
 by $S^\phi$ the solution operator of the obstacle problem \eqref{eq:VI}
 with lower obstacle $-\infty$ and upper obstacle $\phi$,
 and by \smash{$S_\psi^\phi$} the solution operator of the bilateral obstacle problem 
 with lower obstacle $\psi$ and upper obstacle $\phi$.  
 Note that all of these operators are well defined and globally 
 Lipschitz continuous as functions from $H^{-1}(\Omega)$ to $H_0^1(\Omega)$;
 see \cref{prop:solvability}. We further introduce the 
 symbols $M_\psi$, $M^\phi$, and \smash{$M_\psi^\phi$} to denote the multiplier maps 
 associated with the functions \smash{$S_\psi$, $S^\phi$, and $S_\psi^\phi$}, respectively, i.e., 
 \smash{$M_\psi, M^\phi, M_\psi^\phi\colon H^{-1}(\Omega) \to H^{-1}(\Omega)$} are defined by 
\begin{align*}
M_\psi(u) := u - (- \Delta) S_\psi(u),
~~\,
M^\phi(u) := u - (- \Delta) S^\phi (u),
~~\,
M_\psi^\phi(u) := u - (- \Delta) S_\psi^\phi (u),
\end{align*}
for all $u \in H^{-1}(\Omega)$. 
Clearly, the maps \smash{$M_\psi, M^\phi, M_\psi^\phi\colon H^{-1}(\Omega) \to H^{-1}(\Omega)$} 
are globally Lipschitz continuous too. 
From  \eqref{eq:regOmega} and classical results, we obtain
the following.

\begin{lemma}[higher regularity of multipliers and states]%
	\label{lem:regularity_again}%
	For all $u \in L^p(\Omega)$, we have
	\smash{$M_\psi(u), M^\phi(u), M_\psi^\phi (u) \in L^p(\Omega)$}
	and
	\smash{$S_\psi(u), S^\phi(u), S_\psi^\phi (u) \in C_0(\Omega)$}.
	Moreover,
	the maps \smash{$S_\psi$}, \smash{$S^\phi$}, and \smash{$S_\psi^\phi$}
	are globally Lipschitz  as functions from $L^p(\Omega)$ to $C_0(\Omega)$,
	and the maps \smash{$M_\psi$}, \smash{$M^\phi$}, and \smash{$M_\psi^\phi$}
	are globally Lipschitz as functions from $L^p(\Omega)$ to $L^1(\Omega)$.
\end{lemma}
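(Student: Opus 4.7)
The plan is to base the proof on the Lewy--Stampacchia inequalities for the three obstacle problems in combination with the regularity assumption \eqref{eq:regOmega} and a closed-graph argument. For the unilateral problem with lower obstacle $\psi$, I would start from the classical pointwise Lewy--Stampacchia identity
\[
-\Delta S_\psi(u) = \max(u, -\Delta \psi) \qquad \text{a.e.\ in } \Omega,
\]
which can be derived by a penalization of the VI using the complementarity structure and $\Delta\psi \in L^p(\Omega)$. This identity immediately gives $M_\psi(u) = -(-\Delta\psi - u)^+ \in L^p(\Omega)$ and, since $t \mapsto \max(t,s)$ is $1$-Lipschitz in $t$, that the map $M_\psi \colon L^p(\Omega) \to L^p(\Omega) \hookrightarrow L^1(\Omega)$ is $1$-Lipschitz. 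The inclusion $S_\psi(u) \in C_0(\Omega)$ follows from \eqref{eq:regOmega}, and the Lipschitz continuity of $S_\psi \colon L^p(\Omega) \to C_0(\Omega)$ is a consequence of the boundedness of the linear map $(-\Delta)^{-1} \colon L^p(\Omega) \to C_0(\Omega)$, which holds by the closed-graph theorem (applied using \eqref{eq:regOmega}, \cref{prop:solvability}, and the continuous embedding $C_0(\Omega) \hookrightarrow L^2(\Omega)$). The claims for $S^\phi$ and $M^\phi$ follow from the symmetric identity $-\Delta S^\phi(u) = \min(u, -\Delta\phi)$.

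For the bilateral case, I would first derive the bilateral Lewy--Stampacchia sandwich
\[
\min(u, -\Delta\phi) \leq -\Delta S_\psi^\phi(u) \leq \max(u, -\Delta\psi) \qquad \text{a.e.\ in } \Omega
\]
by a simultaneous penalization of both obstacles. Together with \eqref{eq:regOmega}, this sandwich yields $-\Delta S_\psi^\phi(u), M_\psi^\phi(u) \in L^p(\Omega)$ and $S_\psi^\phi(u) \in C_0(\Omega)$. For the Lipschitz continuity of $S_\psi^\phi \colon L^p(\Omega) \to C_0(\Omega)$, I would use a T-monotonicity argument: testing the VIs for $y_i := S_\psi^\phi(u_i)$ with the admissible comparisons $\min(y_1, y_2), \max(y_1, y_2) \in K$ (admissibility follows from \cref{lem:min_in_hnulleins} and $\psi \leq y_i \leq \phi$) produces an $H_0^1$-energy estimate for $(y_1 - y_2)^\pm$ that can be upgraded to an $L^\infty$-estimate via Stampacchia's truncation technique (using $p > d/2$).

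The most delicate point, and what I expect to be the main obstacle of the proof, is the $L^p \to L^1$-Lipschitz continuity of the bilateral multiplier $M_\psi^\phi$: the sandwich yields only $L^p$-boundedness, and in contrast to the unilateral situation there is no pointwise identity for $-\Delta S_\psi^\phi(u)$ that would give Lipschitz continuity for free. Here I would exploit the gap condition $\psi \leq \phi - c$ with $c > 0$ together with the continuity $y_i \in C_0(\Omega)$ to decompose $\lambda_i := -M_\psi^\phi(u_i)$ as $\lambda_i = \lambda_i^\psi - \lambda_i^\phi$, where $\lambda_i^\psi = (-\Delta\psi - u_i)^+ \mathds{1}_{\{y_i = \psi\}} \geq 0$ and $\lambda_i^\phi = (u_i + \Delta\phi)^+ \mathds{1}_{\{y_i = \phi\}} \geq 0$ have disjoint supports (at positive distance, by continuity and the gap condition). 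Splitting the integral $\|\lambda_1 - \lambda_2\|_{L^1(\Omega)}$ according to the four possible combinations of activity/inactivity of the two obstacles for $y_1$ and $y_2$, and controlling the ``mismatch'' pieces through the $L^\infty$-estimate for $y_1 - y_2$ obtained in the previous step, should then yield the desired Lipschitz bound. This careful bookkeeping over the coincidence sets is the main technical burden of the argument.
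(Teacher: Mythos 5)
There is a genuine error at the start of your argument: the Lewy--Stampacchia result for the lower-obstacle problem is a \emph{pointwise inequality}, not an identity. The correct statement is
\[
u \;\le\; -\Delta S_\psi(u) \;\le\; \max(u, -\Delta\psi) \qquad \text{a.e.\ in } \Omega,
\]
and the second inequality is strict wherever $S_\psi(u) > \psi$ and $-\Delta\psi > u$. (A concrete counterexample to your claimed identity: take $\psi$ a large negative constant so the constraint is everywhere inactive, and $u$ a negative constant; then $-\Delta S_\psi(u) = u$ but $\max(u,-\Delta\psi) = 0$.) The inequality still gives $M_\psi(u), -\Delta S_\psi(u) \in L^p(\Omega)$, and from there $S_\psi(u)\in C_0(\Omega)$ via \eqref{eq:regOmega}, so the regularity assertions survive. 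What does \emph{not} survive is your claimed $1$-Lipschitz estimate for $M_\psi\colon L^p(\Omega)\to L^p(\Omega)$: that was entirely derived from the (false) identity $M_\psi(u) = -(-\Delta\psi - u)^+$, and in fact the paper only asserts, and one should only expect, Lipschitz continuity of the multiplier maps from $L^p(\Omega)$ into $L^1(\Omega)$. So the Lipschitz part of your unilateral argument has no support, and you end up needing to run the kind of coincidence-set bookkeeping you describe for $M_\psi^\phi$ already for $M_\psi$ and $M^\phi$.

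A structural remark for comparison: you argue unilateral-first and then treat the bilateral problem separately, whereas the paper goes the other way. The paper establishes the sandwich $\min(u,-\Delta\phi) \le -\Delta S_\psi^\phi(u) \le \max(u,-\Delta\psi)$ directly for the bilateral problem via a one-sided auxiliary VI (showing $y = \tilde y$), gets the $L^p\to C_0(\Omega)$ Lipschitz estimate from \cref{prop:LpLipschitz} without a separate $T$-monotonicity argument, and then \emph{deduces} the unilateral results from the bilateral ones by inserting artificial inactive constant obstacles (justified by the $L^\infty$-bound from \cref{prop:LpLipschitz}). For the $L^p\to L^1$ Lipschitz continuity of the multipliers it refers to the argument of \cite[proof of Theorem 2.3]{ChristofWachsmuth2021SSC}. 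Your $T$-monotonicity step and your decomposition idea for $M_\psi^\phi$ (separated contact sets via the gap condition and continuity) are sound in spirit and could be made to work, but the unilateral Lipschitz step as you stated it needs to be replaced by the same kind of argument, not by the claimed identity.
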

\begin{proof}
	The assertions of the lemma follow
	from the so-called Lewy--Stampacchia inequalities; see \cite[Theorem~4.35]{Troianiello1987}.
	We include a proof for the convenience of the reader. 
	We begin with the bilateral case.
	Given a function $u \in L^p(\Omega)$ with state \smash{$y := S_\psi^\phi(u)$}, we consider
	the solution $\tilde y \in H_0^1(\Omega)$ of the variational inequality 
	\begin{equation}
		\label{eq:VI_tilde_y}
		\tilde y \ge y,
		\qquad
		\dual*{-\Delta \tilde y - \min(-\Delta\phi, u)}{\tilde v - \tilde y}_{H_0^1(\Omega)} \ge 0
		\quad \forall \tilde v \in H_0^1(\Omega), \tilde v \ge y.
	\end{equation}
	Here, the inequalities $\tilde y \ge y$ and $\tilde v \ge y$ are to be understood in the a.e.-sense.
	In order to show $\tilde y \le \phi$,
	we note that
	$y \leq \min(\tilde y, \phi) \in H_0^1(\Omega)$ holds;
	see \cref{lem:min_in_hnulleins}.
	Choosing $\min(\tilde y, \phi)$
	as the test function in \eqref{eq:VI_tilde_y}
	yields
	\begin{align*}
		0 &\le
		\int_\Omega
		\nabla\tilde y\cdot\nabla(\min(\tilde y, \phi) - \tilde y)
		- \min(-\Delta\phi, u) (\min(\tilde y, \phi) - \tilde y)
		\dd x
		\\
		&\le
		\int_\Omega
		\nabla\tilde y\cdot\nabla(\min(\tilde y, \phi) - \tilde y)
		- (-\Delta\phi) (\min(\tilde y, \phi) - \tilde y)
		\dd x
		\\
		&=
		\int_\Omega
		\nabla(\tilde y - \phi)\cdot\nabla(\min(\tilde y, \phi) - \tilde y)
		\dd x
		=
		-\norm{\min(0,\phi - \tilde y)}_{H_0^1(\Omega)}^2
		.
	\end{align*}
	Here, we have again used \cite[Theorem 5.8.2]{Attouch2006}. 
	The above shows that $\min(0,\phi - \tilde y) = 0$.
	Thus, $\psi \le y \le \tilde y \le \phi$ a.e.\ in $\Omega$.
	By choosing $\tilde y$ as the test function in the VI for $y$,
	by setting $\tilde v = y$ in 
	\eqref{eq:VI_tilde_y}, and by adding the resulting inequalities, we obtain that 
	\begin{align*}
		\norm{y - \tilde y}_{H_0^1(\Omega)}^2
		\le
		\int_\Omega (u - \min(-\Delta\phi, u))(y - \tilde y) \dd x
		\le
		0.
	\end{align*}
	This proves $y = \tilde y$.
	From \eqref{eq:VI_tilde_y}, we now get
	\begin{align*}
		-\Delta y - \min(-\Delta\phi, u)
		=
		-\Delta \tilde y - \min(-\Delta\phi, u)
		\ge
		0,
	\end{align*}
	where the inequality is to be understood in the sense of $H^{-1}(\Omega)$; cf.\ \cite[Lemma 2.5]{Rauls2020}.
	Hence, $\min(-\Delta\phi,u) \le -\Delta y$.
	Analogously, one also shows that
	$-\Delta y \le \max(-\Delta\psi,u)$.
	Thus, 
	$-\Delta y \in L^p(\Omega)$
	and, as a consequence, \smash{$M_\psi^\phi(u) = u - (-\Delta) y \in L^p(\Omega)$}.
	Due to \eqref{eq:regOmega}, this also yields 
	\smash{$S_\psi^\phi (u) \in C_0(\Omega)$}. The asserted Lipschitz 
	continuity of \smash{$S_\psi^\phi$} as a 
	function from $L^p(\Omega)$ to $C_0(\Omega)$
	now follows immediately from \cref{prop:LpLipschitz}. 
	For the unilateral cases, one easily obtains from 
	\cref{prop:LpLipschitz} that
	\smash{$S_\psi(u), S^\phi(u) \in L^\infty(\Omega)$} for 
	$u \in L^p(\Omega)$. This allows us to add artificial, inactive, constant 
	obstacles to the VIs satisfied by $S_\psi(u)$ and $S^\phi(u)$ and to deduce
	the regularities \smash{$M_\psi(u), M^\phi(u) \in L^p(\Omega)$}
	and
	\smash{$S_\psi(u), S^\phi(u) \in C_0(\Omega)$} from the already established
	result for the bilateral case. The Lipschitz continuity 
	of $S_\psi$ and $S^\phi$ then again follows from 
	\cref{prop:LpLipschitz}. To finally establish the Lipschitz continuity of 
	\smash{$M_\psi$}, \smash{$M^\phi$}, \smash{and $M_\psi^\phi$} from $L^p(\Omega)$
	to $L^1(\Omega)$, 
	one can argue along the exact same lines as in 
	\cite[proof of Theorem 2.3]{ChristofWachsmuth2021SSC}. 
\end{proof}

To be able to define suitable Newton derivatives for 
$S_\psi$ and $S^\phi$, we introduce some additional notation.

\begin{definition}[solution maps of Poisson problems]%
\label{def:PoissonSolutionOps}%
	Given an open nonempty set $O \subset \Omega$, we denote by
	$\SSS(O) \colon H^{-1}(\Omega) \to H_0^1(\Omega)$
	the solution map $h \mapsto \delta$ of the Poisson problem
\begin{align*}
			\delta \in H_0^1(O), \qquad
			\dual{ - \Delta \delta - h}{ v }_{H_0^1(\Omega)} = 0
			\qquad \forall v \in H_0^1(O).
\end{align*}
		Here, $H_0^1(O) \subset H_0^1(\Omega)$
		is the completion of $C_c^\infty(O)$ in $H^1(\Omega)$; cf.\ \cref{sec:2}.
\end{definition}

Using \cref{def:PoissonSolutionOps}, we can define the set-valued functions 
that serve as the Newton derivatives for 
$S_\psi$, $S^\phi$, $M_\psi$, and $M^\phi$ in this section.

\begin{definition}[{Newton derivatives for $S_\psi$, $S^\phi$, $M_\psi$, and $M^\phi$}]%
\label{def:GenDerUni}%
 Given $u \in L^p(\Omega)$, we define:
\begin{equation*}
\begin{aligned}
\D S_\psi(u) &:= 
\big \{
\SSS(O)
\,\left |~
O \text{ open and }
\set{S_\psi(u) > \psi } \subset O \subset \Omega \setminus \supp\left (M_\psi(u)\right )
\big \},\right.
\\
\D S^\phi(u) &:= 
\big \{
\SSS(O)
\,\left |~
O \text{ open and }
\set{S^\phi(u) < \phi} \subset O \subset \Omega \setminus \supp\left (M^\phi(u)\right )
\big \},\right.
\\
\D M_\psi(u) &:= \Id - (-\Delta) \D S_\psi(u),
\\
\D M^\phi (u) &:= \Id - (-\Delta) \D S^\phi(u).
\end{aligned}
\end{equation*}
Here, the sets $\set{S_\psi(u) > \psi }$ and $\set{S^\phi(u) < \phi}$ are defined 
with respect to (w.r.t.)
the $C(\Omega)$-representatives of the functions $S_\psi(u)$, $S^\phi(u)$, $\psi$, and $\phi$,
respectively,
and the symbol $\supp(\cdot)$ denotes the support in the sense of Borel measures on $\Omega$.
\end{definition}

We note that the sets in \cref{def:GenDerUni}
are nonempty
since we can always choose
$O = \set{S_\psi(u) > \psi}$
and
$O = \set{S^\phi(u) < \phi}$,
respectively.
From the results of \cite{Rauls2020}, 
we obtain the following.

\begin{lemma}%
\label{lem:generalderivativeUni}%
Let $u \in L^p(\Omega)$.\;Then 
$\D S_\psi(u) \subset \partial_B^{sw}S_\psi(u)$ and 
$\D S^\phi(u) \subset \partial_B^{sw}S^\phi(u)$.
\end{lemma}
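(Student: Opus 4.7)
My plan is to reduce the claim to known constructions of Rauls via a symmetry argument. The identities $S^\phi(u) = -S_{-\phi}(-u)$ and $M^\phi(u) = -M_{-\phi}(-u)$, both immediate from the defining variational inequality after multiplying by $-1$, give the operator equality $(S^\phi)'(u) = S_{-\phi}'(-u)$ at each Gâteaux point and the equality of sets $\D S^\phi(u) = \D S_{-\phi}(-u)$. Hence $\D S^\phi(u) \subset \partial_B^{sw} S^\phi(u)$ will follow from the inclusion for the lower-obstacle case applied to the pair $(-\phi,-u)$, and it suffices to prove $\D S_\psi(u) \subset \partial_B^{sw} S_\psi(u)$.

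So I fix $u \in L^p(\Omega)$ and an admissible open set $O$ satisfying $\{S_\psi(u) > \psi\} \subset O \subset \Omega \setminus \supp(M_\psi(u))$, and I aim to produce a sequence $\{u_n\} \subset \DD_{S_\psi}$ with $u_n \to u$ in $L^p(\Omega)$ (and hence in $H^{-1}(\Omega)$, by the compact embedding) such that $S_\psi'(u_n) = \SSS(O)$ for every $n$. Once this is in place, $S_\psi'(u_n) \WOT \SSS(O)$ holds trivially and the definition of the strong-weak Bouligand differential delivers $\SSS(O) \in \partial_B^{sw} S_\psi(u)$.

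The strategy for the construction is to perturb $u$ so as to enforce \emph{strict complementarity} on the candidate active set $\Omega \setminus O$: the perturbed state $S_\psi(u_n)$ should coincide with $\psi$ quasi-everywhere on $\Omega \setminus O$ while the corresponding multiplier $M_\psi(u_n)$ should have strictly positive fine density there. Under such strict complementarity, the capacity-theoretic description of the critical cone (see \cite[section~2.2]{Rauls2020}) forces $\KK(u_n) = H_0^1(O)$, which is a subspace; consequently $u_n \in \DD_{S_\psi}$ and the variational inequality \eqref{eq:ddauxprob} collapses to the linear Poisson problem on $H_0^1(O)$, yielding $S_\psi'(u_n) = \SSS(O)$ as desired.

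The main obstacle is the actual construction of these perturbations: one cannot simply mollify or truncate $u$, because the fine support of the multiplier has to be controlled simultaneously with the $L^p$-convergence $u_n \to u$. This construction is, however, precisely what is carried out in \cite[Proposition~2.11(i), Theorem~4.3]{Rauls2020} by adding to $u$ a carefully chosen element supported on $\Omega \setminus O$ that plays the role of a capacitary potential for the candidate active set. Invoking those results for our chosen $O$ produces the required sequence $\{u_n\}$ and completes the argument.
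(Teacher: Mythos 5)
Your argument is correct and follows the paper's own route: reduce $S^\phi$ to $S_\psi$ by the sign flip $S^\phi(u) = -S_{-\phi}(-u)$ and then invoke \cite[Proposition~2.11(i), Theorem~4.3]{Rauls2020} to realize each $\SSS(O)$ as the Gâteaux derivative along a sequence of strictly complementary perturbations of $u$. The only step the paper makes explicit that your sketch leaves implicit is the translation from the conditions in \cref{def:GenDerUni} (open $O$, ordinary support of $M_\psi(u)$) to the capacitary hypotheses of the cited results (quasi-open sets, fine support), which works because every open set is quasi-open and the $H_0^1(\Omega)$-fine support of a measure is contained in its ordinary support.
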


\begin{proof}
This follows from \cite[Theorem 4.3]{Rauls2020},
the fact that open sets are quasi-open, 
the fact that the $H_0^1(\Omega)$-fine-support of a measure 
is contained in its ordinary support (see \cite[Lemma A.4]{Wachmuth2014}),
and the inclusions in \cite[Proposition 2.11]{Rauls2020}. Note that the 
result for $S^\phi$ can be trivially obtained from that for $S_\psi$ by a simple
switch of signs so that the results of \cite{Rauls2020} indeed also apply to $S^\phi$.
\end{proof}

In combination with \cref{th:semismoothH01unilateral},
\cref{lem:generalderivativeUni} shows that the maps
$\D S_\psi, \D S^\phi\colon$ $L^p(\Omega) \rightrightarrows\LL(H^{-1}(\Omega), H_0^1(\Omega))$
and
$\D M_\psi, \D M^\phi\colon L^p(\Omega) \rightrightarrows\LL(H^{-1}(\Omega), H^{-1}(\Omega))$
can be used as Newton derivatives for 
$S_\psi$, $S^\phi$, $M_\psi$, and $M^\phi$,
respectively.

\begin{corollary}%
The operators
$S_\psi$ and  $S^\phi$
are Newton differentiable as functions from $L^p(\Omega)$ to $H_0^1(\Omega)$ 
when equipped with the derivatives 
$\D S_\psi$ and $\D S^\phi$, respectively,
and the operators $M_\psi$ and $M^\phi$
are Newton differentiable as functions from $L^p(\Omega)$ to $H^{-1}(\Omega)$ 
when equipped with the derivatives 
$\D M_\psi$ and $\D M^\phi$, respectively.
\end{corollary}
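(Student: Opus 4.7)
The plan is to derive the corollary as a direct consequence of the $H_0^1(\Omega)$-Newton differentiability of $S_\psi$ established in \cref{th:semismoothH01unilateral}, combined with the inclusions provided by \cref{lem:generalderivativeUni} and the standard behavior of Newton derivatives under continuous linear transformations.

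The first step is to record the following elementary \emph{restriction principle} for Newton derivatives: if $F\colon X \to Y$ is Newton differentiable with derivative $\D F$ and $\D' F\colon X \rightrightarrows \LL(X,Y)$ satisfies $\emptyset \neq \D' F(x) \subset \D F(x)$ for every $x \in X$, then $\D' F$ is also a Newton derivative of $F$. This is immediate from the definition \eqref{eq:NewtonDifDef}, since the supremum over a nonempty subset is bounded by the supremum over the full set. Combining this principle with the nonemptyness of $\D S_\psi(u)$ noted after \cref{def:GenDerUni}, the inclusion $\D S_\psi(u) \subset \partial_B^{sw} S_\psi(u)$ from \cref{lem:generalderivativeUni}, and the Newton differentiability of $S_\psi$ from $L^p(\Omega)$ to $H_0^1(\Omega)$ with derivative $\partial_B^{sw} S_\psi$ asserted by \cref{th:semismoothH01unilateral}, yields the claim for $S_\psi$. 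The argument for $S^\phi$ is completely analogous and can, for instance, be reduced to the lower-obstacle case via the symmetry $v \mapsto -v$, $\phi \mapsto -\phi$, $u \mapsto -u$.

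For the multiplier maps, I would next exploit that $-\Delta\colon H_0^1(\Omega) \to H^{-1}(\Omega)$ is an isometric isomorphism by \eqref{eq:LaplaceRiesz}. Writing $M_\psi(u) = u - (-\Delta) S_\psi(u)$ and, for an arbitrary $G \in \D S_\psi(u+h)$, setting $G' := \Id - (-\Delta) G \in \D M_\psi(u+h)$, a direct computation gives
\begin{equation*}
\norm{M_\psi(u+h) - M_\psi(u) - G' h}_{H^{-1}(\Omega)}
= \norm{S_\psi(u+h) - S_\psi(u) - G h}_{H_0^1(\Omega)}.
\end{equation*}
Dividing by $\norm{h}_{L^p(\Omega)}$ and taking the supremum over $G' \in \D M_\psi(u+h)$ (equivalently, over $G \in \D S_\psi(u+h)$), the already-established Newton differentiability of $S_\psi$ from $L^p(\Omega)$ to $H_0^1(\Omega)$ immediately delivers the Newton differentiability of $M_\psi$ from $L^p(\Omega)$ to $H^{-1}(\Omega)$ with derivative $\D M_\psi$. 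The argument for $M^\phi$ is again completely analogous.

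I do not anticipate any genuine obstacle here: the corollary is a purely structural consequence of \cref{th:semismoothH01unilateral,lem:generalderivativeUni} together with the isometry property of $-\Delta$. The only mild care needed is to interpret the formula $\D M_\psi(u) = \Id - (-\Delta) \D S_\psi(u)$ correctly as a set of operators in $\LL(H^{-1}(\Omega), H^{-1}(\Omega))$ acting on test directions $h \in L^p(\Omega)$ via the continuous embedding $L^p(\Omega) \hookrightarrow H^{-1}(\Omega)$.
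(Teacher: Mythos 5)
Your proposal is correct and follows essentially the same route as the paper: appeal to \cref{th:semismoothH01unilateral} and \cref{lem:generalderivativeUni} together with the nonemptyness of $\D S_\psi(u)$ to get the result for $S_\psi$, transfer to $S^\phi$ by a sign change, and then transfer to the multipliers via the isometric isomorphism $-\Delta$. The only thing you leave implicit that the paper spells out is the trivial exponent check $p > \max(1,d/2) \ge \max(1, 2d/(d+2))$, which is needed so that the $p$ fixed in the standing assumptions of \cref{sec:6} actually falls in the range covered by \cref{th:semismoothH01unilateral}.
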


\begin{proof}
For $S_\psi$, the assertion follows from 
\cref{th:semismoothH01unilateral}, 
\cref{lem:generalderivativeUni},
the estimate $p > \max(1,d/2) \ge \max(1, 2d/(d+2))$,
the fact that $\D S_\psi(u) $ is nonempty for all $u \in L^p(\Omega)$
(cf.\ \cref{def:GenDerUni}),
and the definition of Newton differentiability. 
The result for $S^\phi$ is obtained from that for $S_\psi$
by an elementary sign transformation. 
As the maps $M_\psi$ and $M^\phi$ arise from $S_\psi$ and $S^\phi$,
respectively, by applying the Laplacian and by adding the identity, 
the assertions for $M_\psi$ and $M^\phi$ follow 
from the results for  $S_\psi$ and $S^\phi$ and the definition of Newton differentiability;
cf.\ \cref{def:GenDerUni}.~\end{proof}

\begin{remark}~
\begin{itemize}
\item As the reader may have noticed, we do not endow the maps 
$S_\psi$ and  $S^\phi$ with their full strong-weak Bouligand differentials 
$\partial_B^{sw}S_\psi $ and 
$\partial_B^{sw}S^\phi$ as Newton derivatives in this section although 
we know from \cref{th:semismoothH01unilateral} that we could do so and still 
obtain Newton differentiability. The reason for this is that we require 
additional properties of the Newton derivatives of $S_\psi$ and  $S^\phi$
in the following
to be able to invoke the results of \cref{sec:5}; cf.\ \cref{lem:Tinversion}.

\item The subsequent analysis still works when 
the concepts of openness, inclusion, and support in \cref{def:GenDerUni}
are replaced by their capacitary counterparts; cf.\ \cite[Theorem 4.3]{Rauls2020}.
We omit this generalization here for the sake of 
simplicity. 

\end{itemize}
\end{remark}

We can now start proving the Newton differentiability of $S_\psi^\phi$ and $M_\psi^\phi$. 
We begin with the following observation.

\begin{proposition}%
\label{prop:impsys}%
Suppose that $u \in L^p(\Omega)$
and  $y \in H_0^1(\Omega) $ are given. 
Then it holds $y = S_\psi^\phi(u) $ if and only if there 
exist $ \lambda_\psi,\lambda^\phi \in L^p(\Omega)$ satisfying 
\begin{equation}
\label{eq:impsys1d}
\begin{gathered}
-\Delta y = u - \lambda_\psi - \lambda^\phi, 
\qquad
\lambda_\psi = M_\psi ( u - \lambda^\phi ),
\qquad
\lambda^\phi = M^\phi ( u - \lambda_\psi ).
\end{gathered}
\end{equation}
If $y = S_\psi^\phi(u) $ holds, then the functions
$\lambda_\psi$ and $\lambda^\phi$ are 
unique and one has
\[
\lambda_\psi  = \min(0, M_\psi^\phi(u)),
\quad
\lambda^\phi  = \max(0, M_\psi^\phi(u)),
\quad
\text{and}\quad  y =  S_\psi( u - \lambda^\phi) = S^\phi(u - \lambda_\psi).
\]
\end{proposition}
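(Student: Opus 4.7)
The plan is to split the proof into three parts: a forward implication in which the multipliers are constructed explicitly, a reverse implication that verifies the bilateral VI, and a uniqueness argument.

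For the forward direction I would assume $y = S_\psi^\phi(u)$ and take as candidates $\lambda_\psi := \min(0, M_\psi^\phi(u))$ and $\lambda^\phi := \max(0, M_\psi^\phi(u))$. By \cref{lem:regularity_again} these lie in $L^p(\Omega)$, and by construction $\lambda_\psi + \lambda^\phi = M_\psi^\phi(u) = u + \Delta y$, which yields the first equation in \eqref{eq:impsys1d}. The bilateral VI, via standard testing with $v = y \pm \varphi$ for $\varphi \in H_0^1(\Omega)$ supported in $\{\psi < y < \phi\}$, forces the multiplier $u + \Delta y \in L^p(\Omega)$ to vanish there, so that $\supp(\lambda_\psi) \subset \{y = \psi\}$ and $\supp(\lambda^\phi) \subset \{y = \phi\}$. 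To obtain $\lambda_\psi = M_\psi(u - \lambda^\phi)$ I would verify that $y$ solves the unilateral obstacle problem with right-hand side $u - \lambda^\phi$ and lower obstacle $\psi$: the residual $-\Delta y - (u - \lambda^\phi) = -\lambda_\psi$ is nonnegative and supported on $\{y = \psi\}$, and any admissible test $v \ge \psi$ satisfies $v - y \ge 0$ there, so the unilateral VI reduces to $\int(-\lambda_\psi)(v-y)\,dx \ge 0$. The identity $\lambda^\phi = M^\phi(u - \lambda_\psi)$ follows by the symmetric argument.

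For the reverse direction, I would start from any pair $(\lambda_\psi, \lambda^\phi) \in L^p(\Omega)^2$ satisfying \eqref{eq:impsys1d}. The second equation, together with the first and the fact that $-\Delta$ is an isomorphism, forces $y = S_\psi(u - \lambda^\phi)$; symmetrically $y = S^\phi(u - \lambda_\psi)$. Hence $\psi \le y \le \phi$, so $y \in K$, and for any test $v \in K$ the unilateral VIs yield $\langle -\lambda_\psi, v - y\rangle_{H_0^1(\Omega)} \ge 0$ (from $v \ge \psi$) and $\langle -\lambda^\phi, v - y\rangle_{H_0^1(\Omega)} \ge 0$ (from $v \le \phi$); adding the two and using $-\Delta y - u = -\lambda_\psi - \lambda^\phi$ gives the bilateral VI, so $y = S_\psi^\phi(u)$.

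For uniqueness given $y = S_\psi^\phi(u)$ I would combine the sign information $\lambda_\psi \le 0$, $\lambda^\phi \ge 0$ (built into $M_\psi$ and $M^\phi$) with the support properties $\supp(\lambda_\psi) \subset \{y = \psi\}$ and $\supp(\lambda^\phi) \subset \{y = \phi\}$ just established; the strict separation $\psi \le \phi - c$ in \eqref{eq:constant_c} makes these sets disjoint, so together with $\lambda_\psi + \lambda^\phi = M_\psi^\phi(u)$ one is forced to $\lambda_\psi = \min(0, M_\psi^\phi(u))$ and $\lambda^\phi = \max(0, M_\psi^\phi(u))$. The equalities $y = S_\psi(u - \lambda^\phi) = S^\phi(u - \lambda_\psi)$ are then already part of the argument above. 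I expect the main technical obstacle to lie in the forward direction, specifically in rigorously justifying the support/sign structure of $M_\psi^\phi(u)$ and in writing the $H^{-1}$–$H_0^1$ pairings as Lebesgue integrals; both are addressable by the Lewy--Stampacchia-type reasoning already employed in \cref{lem:regularity_again} together with the embedding $L^p(\Omega) \hookrightarrow H^{-1}(\Omega)$ valid under \cref{ass:sec6}.
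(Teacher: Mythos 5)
Your proposal is correct and follows essentially the same route as the paper: define $\lambda_\psi$ and $\lambda^\phi$ as the negative and positive parts of $M_\psi^\phi(u)$, derive the pointwise complementarity structure from the bilateral VI via $L^p$-regularity of the multiplier and compactly supported test variations, recognize $y$ as the solution of the two corresponding unilateral problems, and obtain the reverse implication and uniqueness from the complementarity system. The reverse direction in your sketch is phrased slightly more directly (adding the two unilateral VIs for a common test function $v\in K$) than the paper's detour through the displayed complementarity system, but the content is identical.

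One precise point to fill in: testing with $v = y \pm \varphi$ for $\varphi$ supported in $\{\psi<y<\phi\}$ only gives $M_\psi^\phi(u)=0$ on the inactive set, hence only $\supp M_\psi^\phi(u)\subset\{y=\psi\}\cup\{y=\phi\}$. To deduce $\supp(\lambda_\psi)\subset\{y=\psi\}$ (and symmetrically for $\lambda^\phi$) you additionally need the sign conditions $M_\psi^\phi(u)\le 0$ a.e.\ on $\{y=\psi\}$ and $M_\psi^\phi(u)\ge 0$ a.e.\ on $\{y=\phi\}$, which come from one-sided admissible variations $v = y + z$ with $z$ of the appropriate sign near each contact set — exactly the display \eqref{eq:randomeq3737hdd8e-1d} in the paper. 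You flag this at the end, so it is an acknowledged omission in the sketch rather than an error, but without it the key containment $\supp(\lambda_\psi)\subset\{y=\psi\}$ (on which the verification $y=S_\psi(u-\lambda^\phi)$ depends) does not follow.
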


\begin{proof}
Suppose that 
$y = S_\psi^\phi(u) $  holds. Then $-\Delta y = u - M_\psi^\phi(u)$ is in $L^p(\Omega)$
by \cref{lem:regularity_again} and the VI for $y$ reads
\begin{equation}
\label{eq:randomeq2727}
\int_\Omega 
(-\Delta y   - u)(v - y) \dd x \geq 0\qquad \forall v \in H_0^1(\Omega) \text{ with } \psi \leq v \leq \phi \text{ a.e.\ in }\Omega.
\end{equation}
By exploiting our assumptions on $\psi$ and $\phi$ and the continuity of $y$,
and by choosing functions of the form $v = y + z$, $z \in C_c^\infty(\Omega)$,
in \eqref{eq:randomeq2727},
one easily checks that
\begin{equation}
\label{eq:randomeq3737hdd8e-1d}
- M_\psi^\phi(u)
=
-\Delta y - u 
\begin{cases}
= 0 & \text{ a.e.\ in } \{\psi < y < \phi\},
\\
\geq 0 & \text{ a.e.\ in } \{y = \psi\},
\\
\leq 0 & \text{ a.e.\ in } \{y =\phi\},
\end{cases}
\end{equation}
where the sets on the right are defined w.r.t.\ the $C( \Omega)$-representatives of 
$y$, $\psi$, and $\phi$.
If we set
$\lambda_\psi := \mathds{1}_{\{y = \psi\} } (u - (-\Delta) y ) \in L^p(\Omega)$
and
$\lambda^\phi := \mathds{1}_{\{y = \phi\} } (u - (-\Delta) y ) \in L^p(\Omega)$,
then \eqref{eq:randomeq3737hdd8e-1d} yields that
\begin{equation}
\label{eq:randomsys26365465-1d}
\begin{gathered}
-\Delta y = u - \lambda_\psi - \lambda^\phi,
\\
\lambda_\psi \leq 0 \text{ a.e.\ in }\Omega,
\qquad 
y - \psi \geq 0 \text{ a.e.\ in } \Omega,
\qquad
\lambda_\psi(y - \psi) = 0 \text{ a.e.\ in } \Omega,
\\
\lambda^\phi \geq 0 \text{ a.e.\ in }\Omega,
\qquad 
y - \phi \leq 0 \text{ a.e.\ in } \Omega,
\qquad
\lambda^\phi(y - \phi) = 0 \text{ a.e.\ in } \Omega.
\end{gathered}
\end{equation}
This implies
$y = S_\psi( u - \lambda^\phi)$,
$y = S^\phi(u - \lambda_\psi)$,
$\lambda_\psi = M_\psi(u - \lambda^\phi)$,
and 
$\lambda^\phi = M^\phi(u - \lambda_\psi)$
and proves the implication ``$\Rightarrow$''
as well as the last sentence of the assertion.\pagebreak

To see that  ``$\Leftarrow$'' also holds, 
let us assume that 
$u, \lambda_\psi, \lambda^\phi \in L^p(\Omega)$ and $y \in H_0^1(\Omega)$
satisfying \eqref{eq:impsys1d} are given. Then
the identities $\lambda_\psi = M_\psi  ( u - \lambda^\phi)$,
$\lambda^\phi = M^\phi ( u - \lambda_\psi )$,
and 
$-\Delta y = u - \lambda_\psi - \lambda^\phi$ imply that 
$y =  S_\psi( u - \lambda^\phi) = S^\phi(u - \lambda_\psi)$ holds. 
By the definitions of $S_\psi$, $M_\psi$, $S^\phi$,
and $M^\phi$, this entails that 
$y$, $\lambda_\psi$, $u$, and $\lambda^\phi$ satisfy \eqref{eq:randomsys26365465-1d}.
This complementarity system implies \eqref{eq:randomeq3737hdd8e-1d}
which yields \smash{$y = S_\psi^\phi(u) $} as desired.~ 
\end{proof}

Consider now the Hilbert space
\begin{align*}
\HH :=
H^{-1}(\Omega)
\times
H^{-1}(\Omega)
\end{align*}
(endowed with its canonically induced norm)
and the functions defined by
\begin{equation*}
\begin{gathered}
F\colon\HH \times L^p(\Omega)
\to
\HH,
\qquad
\begin{pmatrix}
\lambda_\psi
\\
\lambda^\phi
\\
u
\end{pmatrix}
\mapsto
\begin{pmatrix}
\lambda_\psi
-
M_\psi(u - \lambda^\phi)
\\
\lambda^\phi
-
M^\phi(u - \lambda_\psi)
\end{pmatrix},
\end{gathered}
\end{equation*}
and
\begin{equation}
\label{eq:PhiDef}
\Phi\colon L^p(\Omega)
\to L^p(\Omega)^2,
\qquad
u \mapsto
\begin{pmatrix}
\min(0, M_\psi^\phi(u))
\\
\max(0, M_\psi^\phi(u))
\end{pmatrix}
=:
\begin{pmatrix}
\Phi_\psi(u)
\\
\Phi^\phi(u)
\end{pmatrix}
.
\end{equation}
Then we have the following.

\begin{lemma}%
\label{lem:1dPhiCont}%
It holds $F(\Phi(u), u) = 0 \in \HH$ for all $u \in L^p(\Omega)$.
\end{lemma}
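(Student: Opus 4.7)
The plan is to reduce the claim directly to \cref{prop:impsys}. The definition of $\Phi$ in \eqref{eq:PhiDef} has been set up precisely so that its two components match the explicit formulas for the multipliers $\lambda_\psi = \min(0, M_\psi^\phi(u))$ and $\lambda^\phi = \max(0, M_\psi^\phi(u))$ that appear in the last sentence of \cref{prop:impsys}. So once we identify this, almost nothing remains to do.

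More precisely, I would fix an arbitrary $u \in L^p(\Omega)$ and set $y := S_\psi^\phi(u) \in H_0^1(\Omega)$. Invoking the ``$\Rightarrow$'' direction of \cref{prop:impsys}, we obtain the existence of (unique) functions $\lambda_\psi, \lambda^\phi \in L^p(\Omega)$ that satisfy the system \eqref{eq:impsys1d}, and moreover the final assertion of \cref{prop:impsys} identifies them as $\lambda_\psi = \min(0, M_\psi^\phi(u))$ and $\lambda^\phi = \max(0, M_\psi^\phi(u))$. By the very definition of $\Phi_\psi$ and $\Phi^\phi$ in \eqref{eq:PhiDef}, this means $\lambda_\psi = \Phi_\psi(u)$ and $\lambda^\phi = \Phi^\phi(u)$.

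It then suffices to read off the last two equations of \eqref{eq:impsys1d}, namely
\[
\lambda_\psi = M_\psi(u - \lambda^\phi), \qquad \lambda^\phi = M^\phi(u - \lambda_\psi),
\]
and substitute $\lambda_\psi = \Phi_\psi(u)$, $\lambda^\phi = \Phi^\phi(u)$. This yields
\[
\Phi_\psi(u) - M_\psi(u - \Phi^\phi(u)) = 0, \qquad \Phi^\phi(u) - M^\phi(u - \Phi_\psi(u)) = 0,
\]
which are exactly the two components of the vector $F(\Phi(u), u) \in \HH$. Since $u$ was arbitrary, the proof is complete.

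There is no real obstacle here: the lemma is essentially a restatement of the already-established \cref{prop:impsys} in the compact notation provided by $F$ and $\Phi$. The only minor care needed is to note that $\Phi(u) \in L^p(\Omega)^2 \subset \HH$, which is justified by the regularity $M_\psi^\phi(u) \in L^p(\Omega)$ established in \cref{lem:regularity_again}, so that the composition $F(\Phi(u), u)$ is well defined and the identity genuinely holds as an equation in $\HH$.
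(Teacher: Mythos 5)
Your argument is correct and coincides with the paper's proof, which simply cites \eqref{eq:PhiDef} and \cref{prop:impsys}; you have merely written out the substitution of $\Phi_\psi(u) = \min(0, M_\psi^\phi(u))$ and $\Phi^\phi(u) = \max(0, M_\psi^\phi(u))$ into the last two equations of \eqref{eq:impsys1d} that the paper leaves implicit.
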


\begin{proof}
See \eqref{eq:PhiDef} and \cref{prop:impsys}.
\end{proof}

In what follows, our plan is to 
argue as in the proof
of the implicit function theorem to establish the Newton differentiability of 
$\Phi$ as a function from $L^p(\Omega)$ to $\HH$.
To pursue this approach, we require a generalized derivative for $F$.

\begin{definition}[Newton derivative of $F$]%
\label{def:def.4.12}%
We denote by
$\D F$ the set-valued map
from $L^p(\Omega)^3$
to  
$\LL(\HH, \HH) \times \LL(L^p(\Omega), \HH)$
given by
\begin{align*}
	\D F(\lambda_\psi, \lambda^\phi, u) 
	:=
	\set*{ \GG =
		\left (
		 \begin{pmatrix}
	\Id   & G_\psi 
		\\
	G^\phi  	& \Id
	\end{pmatrix},	
	\begin{pmatrix}
		- G_\psi 
		\\
		- G^\phi 
	\end{pmatrix}
	\right )
	\given
	 \begin{aligned}
	   G_\psi &\in \D M_\psi(u - \lambda^\phi),\\
	   G^\phi &\in \D M^\phi(u - \lambda_\psi) 
	 \end{aligned}
	}.
\end{align*}
The two components of $\GG$ will be denoted by
$\GG_1 \in \LL(\HH,\HH)$
and
$\GG_2 \in \LL(L^p(\Omega),\HH)$.
\end{definition}

Now,
one is tempted to use a standard estimate
to establish that the implicit function 
$\Phi$
is Newton differentiable from $L^p(\Omega)$
to  $\HH$ with Newton derivative 
$\D\Phi(u) := \{ - \GG_1^{-1}\GG_2 \mid \GG \in \D F(\Phi(u),u)\}$.
Given $u \in L^p(\Omega)$, $h \in L^p(\Omega) \setminus \{0\}$, and 
$G := - \GG_1^{-1}\GG_2 \in \D\Phi(u + h)$ with
$\GG \in \D F(\Phi(u+h), u+h)$,
one would try
\begin{equation}
\label{eq:longequation22}
\begin{aligned}
	&
	\frac{\norm{ \Phi(u + h) - \Phi(u) - G h }_\HH}{\norm{h}_{L^p(\Omega)}}
	=
	\frac{\norm{ \Phi(u + h) - \Phi(u) +\GG_1^{-1} \GG_2 h }_\HH}{\norm{h}_{L^p(\Omega)}}
	\\&
	\leq
	C
	\frac{\norm{ \GG_1 \left (\Phi(u + h) - \Phi(u) \right )+ \GG_2 h }_\HH}{\norm{h}_{L^p(\Omega)}}
	=
	C
	 \frac{	 \left \|
	  (\GG_1~~\GG_2)
	 	\begin{pmatrix}
		\Phi(u + h) - \Phi(u) 
		\\
		h
	\end{pmatrix}\right \|_\HH}{\norm{h}_{L^p(\Omega)}}
	\\&
	=
	C
		 \frac{ \left \|
	 F(\Phi(u + h), u+h) - F(\Phi(u), u) - 
	 \GG
	 	\begin{pmatrix}
		\Phi(u + h) - \Phi(u) 
		\\
		h
	\end{pmatrix}\right \|_\HH}{\norm{h}_{L^p(\Omega)}}
	\\
	&\leq
	C L_\XX
	 \frac{\left \|
	 F(\Phi(u + h), u+h) - F(\Phi(u), u) - 
	 \GG
	 	\begin{pmatrix}
		\Phi(u + h) - \Phi(u) 
		\\
		h
	\end{pmatrix}\right \|_\HH}{ \left \|
	 	\begin{pmatrix}
		\Phi(u + h) - \Phi(u) 
		\\
		h
	\end{pmatrix}\right \|_{\XX}}
	.
\end{aligned}
\end{equation}
Here, $C>0$ denotes a bound on the $\LL(\HH, \HH)$-norm 
of the inverse $\GG_1^{-1}$ that is
(at least for all small enough $h$) independent of $h$ and $\GG_1$ (and whose existence, 
of course, has to be proved);
$\XX$ is a suitable normed space;
and $L_\XX$ is a (local) Lipschitz constant 
of the function $(\Phi, \Id)^\top\colon L^p(\Omega) \to \XX$; cf.\ \cite{Mannel2018}
and the references therein. 
Ideally, the last term in \eqref{eq:longequation22} would converge to zero for
$0 <  \|h\|_{L^p(\Omega)} \to 0$.

Unfortunately, these standard arguments fail 
for all $d>1$
as they require a space $\XX$ 
such that $F$ is Newton differentiable as a map from $\XX$ to $\HH$
and such that $(\Phi, \Id)^\top$ is locally Lipschitz as a function from $L^p(\Omega)$ to $\XX$.
To the best of our knowledge,
such a space $\XX$ is not available for $d>1$  
as the maps $M_\psi$ and $M^\phi$
are Newton differentiable only
from $L^q(\Omega)$ to $H^{-1}(\Omega)$
and since the map \smash{$M_\psi^\phi$} is
Lipschitz only from
$L^q(\Omega)$ to $H^{-1}(\Omega) \cap L^1(\Omega)$
for all $q > \max(1, 2d/(d+2))$ and $d > 1$. (For $d=1$, the arguments
in \eqref{eq:longequation22} can be shown to work with $\XX = L^1(\Omega)^3$.)
We will therefore argue differently and 
exploit the structure of $F$ and $\Phi$
to show directly that
\begin{equation}
	\label{eq:weird_fraction}
\hspace{-0.15cm}\sup_{\GG \in \D F(\Phi(u+h),u+h)} \hspace{-0.15cm}
	\frac{\norm{ \GG_1 (\Phi(u + h) - \Phi(u) )+ \GG_2 h }_\HH}{\norm{h}_{L^p(\Omega)}}
	\to 0~~\text{for } 0 <  \|h\|_{L^p(\Omega)} \to 0.
\end{equation}

The first main step of our analysis is proving  that, 
for every $u \in L^p(\Omega)$,
there indeed exist constants $r > 0$ and $C>0$ satisfying
\begin{equation}
\label{eq:randomeq267363}
\sup_{\GG \in \D F(\Phi(u+h),u+h)} 
\|\GG_1^{-1}\|_{\LL(\HH, \HH)}
\leq
C
\qquad \forall
h \in L^p(\Omega),
\|h\|_{L^p(\Omega)} \leq r.
\end{equation}
To establish this locally uniform invertibility,
we verify that the operators $G_\psi$ and $G^\phi$
appearing in the definition of $\D F(\Phi(u ),u )$
are orthogonal projections in $H^{-1}(\Omega)$.
\begin{lemma}%
\label{lem:projidentification}%
Let $u \in L^p(\Omega)$.
Let 
$G_\psi = \Id - (-\Delta)\SSS(O_\psi)\in \D M_\psi(u - \Phi^\phi(u))$ 
and 
$G^\phi= \Id - (-\Delta)\SSS(O^\phi)\in \D M^\phi (u - \Phi_\psi(u))$ be given,
where $\Phi_\psi(u), \Phi^\phi(u) \in L^p(\Omega)$ denote the components of $\Phi(u)$
and $O_\psi$ and $O^\phi$  the open sets that generate $G_\psi$ and $G^\phi$, respectively,
according to \cref{def:GenDerUni}.
 Define
\begin{equation}
\label{eq:Wdefs}
\begin{aligned}
W(G_\psi) &:= \bracks*{- \Delta H_0^1(O_\psi) }^\perp
\qquad
\text{and}
\qquad
&
W(G^\phi)  &:=  \bracks*{- \Delta H_0^1(O^\phi) }^\perp,
\end{aligned}
\end{equation}
where the orthogonal complement is taken in $H^{-1}(\Omega)$,
and denote 
the \smash{$(\cdot,\cdot)_{H^{-1}(\Omega)}$}-orthogonal projection
onto a closed subspace $W \subset H^{-1}(\Omega)$ by $P_W$.
 Then it holds 
\begin{align*}
G_\psi
=
P_{ W(G_\psi)}
\qquad
\text{and}
\qquad
G^\phi
=
P_{ W(G^\phi)}.
\end{align*}

\end{lemma}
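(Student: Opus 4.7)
The statement is essentially a clean identification of the Neumann/Poisson-type operators $\Id-(-\Delta)\SSS(O)$ with orthogonal projections in the Hilbert space $(H^{-1}(\Omega),\innerp{\cdot}{\cdot}_{H^{-1}(\Omega)})$. The plan is to give a short direct verification: exhibit, for each $h\in H^{-1}(\Omega)$, the decomposition $h=G_\psi h+(-\Delta)\delta$ with $\delta:=\SSS(O_\psi)h$, and check that this is \emph{the} orthogonal splitting corresponding to $H^{-1}(\Omega)=W(G_\psi)\oplus (-\Delta H_0^1(O_\psi))$.

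First I would justify that the two spaces in \eqref{eq:Wdefs} are genuine orthogonal complements of closed subspaces. Since $-\Delta\colon H_0^1(\Omega)\to H^{-1}(\Omega)$ is an isometric isomorphism (see \eqref{eq:norm_in_H01}--\eqref{eq:LaplaceRiesz}) and $H_0^1(O_\psi),\,H_0^1(O^\phi)$ are closed subspaces of $H_0^1(\Omega)$ by \cref{def:PoissonSolutionOps}, their images $-\Delta H_0^1(O_\psi)$ and $-\Delta H_0^1(O^\phi)$ are closed subspaces of $H^{-1}(\Omega)$. Hence $W(G_\psi)$ and $W(G^\phi)$ are well-defined closed subspaces and one has the orthogonal direct sum decompositions
\[
H^{-1}(\Omega)=W(G_\psi)\oplus(-\Delta H_0^1(O_\psi))=W(G^\phi)\oplus(-\Delta H_0^1(O^\phi)).
\]

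Next, I focus on $G_\psi$; the argument for $G^\phi$ is verbatim the same after replacing $O_\psi$ by $O^\phi$. Fix $h\in H^{-1}(\Omega)$ and set $\delta:=\SSS(O_\psi)h\in H_0^1(O_\psi)$, so that, by \cref{def:GenDerUni,def:PoissonSolutionOps},
\[
G_\psi h=h-(-\Delta)\delta\qquad\text{with}\qquad (-\Delta)\delta\in -\Delta H_0^1(O_\psi).
\]
To recognise this as the orthogonal splitting, it suffices to prove $G_\psi h\in W(G_\psi)$, i.e., $\innerp{G_\psi h}{-\Delta v}_{H^{-1}(\Omega)}=0$ for all $v\in H_0^1(O_\psi)$. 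Using the formula \eqref{eq:H-1scalarproduct} for the $H^{-1}$-inner product together with \eqref{eq:LaplaceRiesz} and the defining identity of $\SSS(O_\psi)$ in \cref{def:PoissonSolutionOps}, one computes
\begin{align*}
\innerp{G_\psi h}{-\Delta v}_{H^{-1}(\Omega)}
&=\innerp{(-\Delta)^{-1}(h-(-\Delta)\delta)}{v}_{H_0^1(\Omega)}
=\innerp{(-\Delta)^{-1}h-\delta}{v}_{H_0^1(\Omega)}\\
&=\dual{h}{v}_{H_0^1(\Omega)}-\dual{-\Delta\delta}{v}_{H_0^1(\Omega)}=0
\qquad\forall v\in H_0^1(O_\psi).
\end{align*}

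This is the key computation, and honestly there is no serious obstacle: everything reduces to correctly translating the $H^{-1}$-inner product into the $H_0^1$-inner product via the Riesz map $(-\Delta)^{-1}$ and then reading off orthogonality from the Poisson problem. Once the identity above is established, uniqueness of the orthogonal decomposition of $h$ with respect to $H^{-1}(\Omega)=W(G_\psi)\oplus(-\Delta H_0^1(O_\psi))$ immediately forces $G_\psi h=P_{W(G_\psi)}h$. Since $h\in H^{-1}(\Omega)$ was arbitrary, this gives $G_\psi=P_{W(G_\psi)}$. Repeating the same computation with $(O^\phi,\delta^\phi):=(\SSS(O^\phi)h)$ in place of $(O_\psi,\delta)$ yields $G^\phi=P_{W(G^\phi)}$ and completes the proof.
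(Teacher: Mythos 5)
Your proof is correct and follows essentially the same approach as the paper: both exploit that $-\Delta$ is an isometric isomorphism and the defining variational identity of $\SSS(O)$ to recognize $G_\psi$ and $G^\phi$ as $H^{-1}(\Omega)$-orthogonal projections. The paper states it slightly more conceptually (observing that $\SSS(O_\psi)(-\Delta)$ is the $H_0^1$-orthogonal projection onto $H_0^1(O_\psi)$ and then conjugating through the Riesz map via \eqref{eq:H-1scalarproduct}), whereas you verify the orthogonal splitting of a generic $h$ by a direct computation, but the underlying argument is the same.
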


\begin{proof}
From \cref{def:PoissonSolutionOps}, it follows that 
$\SSS(O_\psi)(-\Delta)$ is the $H_0^1(\Omega)$-orthogonal 
projection onto $H_0^1(O_\psi)$. In view of  \eqref{eq:H-1scalarproduct},
this implies that $(-\Delta)\SSS(O_\psi)$ is the
$H^{-1}(\Omega)$-orthogonal projection onto $(-\Delta)H_0^1(O_\psi)$. 
Thus, $G_\psi = \Id - (-\Delta)\SSS(O_\psi)$ is the $H^{-1}(\Omega)$-orthogonal projection 
onto $W(G_\psi) = [ - \Delta H_0^1(O_\psi)]^\perp$ as claimed. 
The assertion for 
$G^\phi = \Id - (-\Delta)\SSS(O^\phi)$ is proved analogously. 
\end{proof}

Next, we are going to show that
the angle between the spaces
$W(G_\psi)$
and  
$W(G^\phi)$ in \eqref{eq:Wdefs}
 is positive.
To this end, we need a lemma on the stability of contact sets.
\begin{lemma}%
\label{lem:sepcontact}%
Let $u \in L^p(\Omega)$ be given. Then there exist
a constant
$r > 0$
and nonempty disjoint compact sets $Z_\psi, Z^\phi \subset \Omega$ such that
it holds 
\begin{align*}
\{S_\psi^\phi(u + h) = \psi\} \subset Z_\psi
\qquad
\text{and}
\qquad
\{S_\psi^\phi(u + h) = \phi\} \subset Z^\phi
\end{align*}
for all $h \in L^p(\Omega)$ with $\|h\|_{L^p(\Omega)} \leq r$.
Here, the contact sets are again defined w.r.t.\ the $C(\Omega)$-representatives
of the functions \smash{$S_\psi^\phi(u + h)$, $\phi$, and $\psi$}.
\end{lemma}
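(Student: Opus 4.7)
The strategy is to combine the $L^p$-to-$C_0(\Omega)$ Lipschitz continuity of $S_\psi^\phi$ provided by \cref{lem:regularity_again} with the separation assumption $\phi-\psi\ge c$ and the boundary estimates in \eqref{eq:constant_c}. Set $y_0:=S_\psi^\phi(u)\in C_0(\Omega)$ and consider the continuous functions
\[
f_\psi:=y_0-\psi,\qquad f^\phi:=\phi-y_0
\]
on $\bar\Omega$. Since $y_0$ vanishes on $\partial\Omega$ and $\psi\le -c\le c\le \phi$ there, both $f_\psi$ and $f^\phi$ are bounded below by $c$ on $\partial\Omega$, and by \eqref{eq:constant_c} one has $f_\psi+f^\phi=\phi-\psi\ge c$ everywhere on $\Omega$.

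The compact candidates will be
\[
Z_\psi:=\set{x\in\bar\Omega\given f_\psi(x)\le c/3},\qquad
Z^\phi:=\set{x\in\bar\Omega\given f^\phi(x)\le c/3}.
\]
By continuity of $f_\psi$ and $f^\phi$, both are closed in $\bar\Omega$; they are disjoint from $\partial\Omega$ because $f_\psi,f^\phi\ge c>c/3$ on $\partial\Omega$, and hence are compact subsets of $\Omega$. They are mutually disjoint because $f_\psi(x)\le c/3$ and $f^\phi(x)\le c/3$ would give $\phi(x)-\psi(x)\le 2c/3<c$, contradicting the separation condition.

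For the uniform inclusion, I would invoke \cref{lem:regularity_again} to obtain a Lipschitz constant $L>0$ with $\|S_\psi^\phi(u+h)-y_0\|_\infty\le L\|h\|_{L^p(\Omega)}$, and then set $r:=c/(3L)$. For any admissible $h$ and any $x$ with $S_\psi^\phi(u+h)(x)=\psi(x)$, this yields $f_\psi(x)=y_0(x)-S_\psi^\phi(u+h)(x)\le c/3$, placing $x$ in $Z_\psi$; the contact set with $\phi$ is handled symmetrically.

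The only minor obstacle is the nonemptiness requirement, since nothing a priori prevents $Z_\psi$ or $Z^\phi$ from being empty. This is easily patched: by continuity and the boundary bounds, the open set $\{f_\psi>c/3\}\cap\{f^\phi>c/3\}\cap\Omega$ contains a neighborhood of $\partial\Omega$ in $\Omega$ and therefore admits two distinct points $x_1,x_2$; adjoining $x_1$ to $Z_\psi$ and $x_2$ to $Z^\phi$ preserves compactness and disjointness and yields the desired nonempty sets.
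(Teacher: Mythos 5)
Your proof is correct and follows essentially the same strategy as the paper's: define $Z_\psi$ and $Z^\phi$ as sublevel sets of the continuous gap functions $y_0-\psi$ and $\phi-y_0$, use the boundary bounds and separation in \eqref{eq:constant_c} to get compactness and disjointness, and invoke the $L^p\to C_0(\Omega)$ Lipschitz estimate from \cref{lem:regularity_again} to get the uniform inclusion. The paper uses thresholds $c/4$ and $c/8$ where you use $c/3$, which is an inessential difference, and the paper splits into cases (both contact sets empty; one nonempty) whereas you define the sublevel sets unconditionally and then patch nonemptiness by adjoining two extra points from a neighborhood of the boundary — a slightly cleaner, unified treatment of the degenerate cases. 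Both handle the nonemptiness correctly.
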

\begin{proof}
If \smash{$\{ S_\psi^\phi(u)  = \psi \} = \{ S_\psi^\phi(u) = \phi\} = \emptyset$} holds,
then the continuity of \smash{$S_\psi^\phi(u)$}, $\phi$, and $\psi$ implies that there exists $m > 0$ with 
\smash{$\psi  + m \leq S_\psi^\phi(u) \leq \phi - m$ in $\bar\Omega$}
and it follows from the Lipschitz continuity of 
$S_\psi^\phi$ as a map from $L^p(\Omega)$
to $C_0(\Omega)$ in \cref{lem:regularity_again}
that the contact sets of
\smash{$S_\psi^\phi(u + h)$} are empty for all small enough $\|h\|_{L^p(\Omega)}$. 
In this case, the assertion holds with any disjoint nonempty compact $Z_\psi, Z^\phi \subset \Omega$. 

Let us now suppose that 
\smash{$\{S_\psi^\phi(u) = \psi\} \neq\emptyset$}
and let $c>0$ be the constant from \eqref{eq:constant_c}.
Then our assumptions on $\psi$, the continuity of \smash{$S_\psi^\phi(u)$},
and the homogeneous Dirichlet boundary conditions
imply that
$Z_\psi := \{ x \in \Omega \mid S_\psi^\phi(u)(x) \leq  \psi(x) + c/4 \}$
is a nonempty and compact set. 
Again due to the Lipschitz continuity of 
\smash{$S_\psi^\phi$} as a function from $L^p(\Omega)$
to $C_0(\Omega)$, 
we obtain that we can find 
$r > 0$ such that, for all $h \in L^p(\Omega)$
with $\|h\|_{L^p(\Omega)} \leq r$, 
we have 
\smash{$S_\psi^\phi(u + h)(x) >  \psi(x) + c/8$}
for all $x \in \Omega \setminus Z_\psi$. 
This implies that 
\smash{$\{S_\psi^\phi(u + h) = \psi\}  \subset Z_\psi$}
holds for all such $h$ as desired.
The set $Z^\phi$ is constructed analogously
(or, in the case \smash{$\{S^\phi_\psi(u) = \phi\} = \emptyset$},  chosen 
as an arbitrary nonempty compact subset of $\Omega \setminus Z_\psi \neq \emptyset$
after potentially making $r$ smaller).
\end{proof}

Via \cref{lem:sine_angle}, we can now show that 
the spaces $W(G_\psi)$
and  
$W(G^\phi)$ in \eqref{eq:Wdefs}
indeed enclose a positive angle that is stable w.r.t.\ small perturbations of $u$.

\begin{lemma}%
\label{lem:boundproj}%
For every $u \in L^p(\Omega)$, there exist constants $r > 0$, $\bar s_0 \in (0,1]$ 
with
\begin{equation}
\label{eq:anglebound}
\inf_{
\substack{
h \in L^p(\Omega), 
\\
\|h\|_{L^p(\Omega)} \leq r}
}
\;
\inf_{
\substack{
G_\psi \in D M_\psi(u +h  - \Phi^\phi(u + h)),
\\ 
G^\phi \in D M^\phi(u +h  - \Phi_\psi(u + h))}
}
s_0(W(G_\psi), W(G^\phi))
\geq  \bar s_0.
\end{equation}
Here, $W(G_\psi)$
and  
$W(G^\phi)$ are defined in \eqref{eq:Wdefs}
and $s_0$ is defined in \cref{lem:sine_angle}.
\end{lemma}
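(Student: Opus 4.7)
The strategy is to deduce the uniform angle bound from an abstract Hilbert space principle: whenever $N_1, N_2 \subset H$ are closed subspaces such that every $f \in H$ admits a decomposition $f = f_1 + f_2$ with $f_i \in N_i$ and $\norm{f_1}_H^2 + \norm{f_2}_H^2 \le K^2 \norm{f}_H^2$, then $s_0(N_1^\perp, N_2^\perp) \ge 1/K$. Via the Riesz isometry $-\Delta \colon H_0^1(\Omega) \to H^{-1}(\Omega)$, this reduces \eqref{eq:anglebound} to producing such a decomposition for $N_1 = H_0^1(O_\psi)$ and $N_2 = H_0^1(O^\phi)$ with a constant $K$ independent of the admissible $h$, $O_\psi$, and $O^\phi$.

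To set up the geometry, I would apply \cref{lem:sepcontact} to fix $r > 0$ and disjoint compact sets $Z_\psi, Z^\phi \subset \Omega$ such that $\{S_\psi^\phi(u+h) = \psi\} \subset Z_\psi$ and $\{S_\psi^\phi(u+h) = \phi\} \subset Z^\phi$ hold for all $h \in L^p(\Omega)$ with $\norm{h}_{L^p(\Omega)} \le r$. Invoking the identity $S_\psi(u+h - \Phi^\phi(u+h)) = S_\psi^\phi(u+h)$ from \cref{prop:impsys} together with \cref{def:GenDerUni}, every open set $O_\psi$ generating an admissible $G_\psi \in \D M_\psi(u+h - \Phi^\phi(u+h))$ satisfies $O_\psi \supset \{S_\psi^\phi(u+h) > \psi\} \supset \Omega \setminus Z_\psi$; analogously, $O^\phi \supset \Omega \setminus Z^\phi$.

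Since $Z_\psi$ and $Z^\phi$ are disjoint compact subsets of $\Omega$, I would then pick a smooth cutoff $\chi_\psi \in C^\infty(\R^d)$ with $0 \le \chi_\psi \le 1$, vanishing on an open neighborhood of $Z_\psi$ whose closure is disjoint from $Z^\phi$, and equal to $1$ on an open neighborhood of $Z^\phi$ whose closure is disjoint from $Z_\psi$; setting $\chi^\phi := 1 - \chi_\psi$ then produces a partition of unity on $\Omega$ that depends only on $Z_\psi$ and $Z^\phi$. A density argument using $C_c^\infty(\Omega)$-approximations of $f \in H_0^1(\Omega)$ together with the containments above shows $\chi_\psi f \in H_0^1(O_\psi)$ and $\chi^\phi f \in H_0^1(O^\phi)$, and a product-rule estimate combined with Poincar\'e's inequality yields a constant $K > 0$, depending only on the $W^{1,\infty}(\Omega)$-norm of $\chi_\psi$, such that
\[
\norm{\chi_\psi f}_{H_0^1(\Omega)}^2 + \norm{\chi^\phi f}_{H_0^1(\Omega)}^2 \le K^2 \norm{f}_{H_0^1(\Omega)}^2 \qquad \forall f \in H_0^1(\Omega).
\]

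Finally, the Riesz isometry identifies $W(G_\psi)$ and $W(G^\phi)$ isometrically with the orthogonal complements of $H_0^1(O_\psi)$ and $H_0^1(O^\phi)$ in $H_0^1(\Omega)$, so $s_0(W(G_\psi), W(G^\phi)) = s_0(H_0^1(O_\psi)^\perp, H_0^1(O^\phi)^\perp)$, where the right-hand side is computed in $H_0^1(\Omega)$. For any $v \in H_0^1(O^\phi)^\perp$ with $\norm{v}_{H_0^1(\Omega)} = 1$, the orthogonality $\innerp{v}{\chi^\phi v}_{H_0^1(\Omega)} = 0$ together with $\chi_\psi v \in H_0^1(O_\psi)$ gives
\[
1 = \innerp{v}{\chi_\psi v}_{H_0^1(\Omega)} = \innerp{P_{H_0^1(O_\psi)} v}{\chi_\psi v}_{H_0^1(\Omega)} \le K \norm{P_{H_0^1(O_\psi)} v}_{H_0^1(\Omega)},
\]
so that $\norm{P_{H_0^1(O_\psi)^\perp} v}_{H_0^1(\Omega)}^2 \le 1 - 1/K^2$. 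Combining this with \cref{lem:projnormH,lem:sine_angle} yields $s_0 \ge 1/K =: \bar s_0$, uniformly in all admissible $h$, $G_\psi$, and $G^\phi$. The crux of the argument, and the main obstacle to a naive estimate, is precisely this uniformity: the constant $K$ must not depend on the possibly highly irregular open sets $O_\psi, O^\phi$ that appear in the derivative, and this independence is achieved by first passing through the a priori fixed separating compact sets $Z_\psi, Z^\phi$ furnished by \cref{lem:sepcontact}.
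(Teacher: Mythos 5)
Your proof is correct and follows essentially the same strategy as the paper's: separate the contact sets via \cref{lem:sepcontact}, fix a smooth cutoff between the resulting compacts $Z_\psi, Z^\phi$, and use the inclusions $O_\psi \supset \Omega \setminus Z_\psi$, $O^\phi \supset \Omega \setminus Z^\phi$ to obtain a uniform bound on the angle. The only difference is presentational: you work on the $H_0^1(\Omega)$ side via the Riesz isometry and a partition of unity $1 = \chi_\psi + \chi^\phi$, whereas the paper stays in $H^{-1}(\Omega)$ and defines an operator $J$ by $\dual{J(v)}{z}_{H_0^1(\Omega)} := \dual{v}{\zeta z}_{H_0^1(\Omega)}$ satisfying $J(v+w)=v$ on $W(G_\psi)+W(G^\phi)$ --- the dual formulation of the same cutoff argument.
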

\begin{proof}
Let $u \in L^p(\Omega)$ be fixed and let $r > 0$, $Z_\psi$, and $Z^\phi$ be
chosen for $u$
as in  \cref{lem:sepcontact}.
Let $\zeta \in C_c^\infty(\Omega)$ be a function that satisfies 
$\zeta \equiv 1$ on $Z_\psi$ and $\zeta \equiv 0$ on $Z^\phi$. We define 
$J\colon H^{-1}(\Omega) \to H^{-1}(\Omega)$ to be the operator that 
maps a functional $v \in H^{-1}(\Omega)$ to the element $J(v)$ of $H^{-1}(\Omega)$ defined by 
\begin{equation}
\label{eq:Jdef}
\left \langle J(v), z\right \rangle_{H_0^1(\Omega)}
:=
\left \langle v, \zeta z  \right \rangle_{H_0^1(\Omega)}\qquad \forall z \in H_0^1(\Omega).
\end{equation}
Note that
$J \in \LL(H^{-1}(\Omega), H^{-1}(\Omega)) \setminus \{0\}$. 
Consider now some $h \in  L^p(\Omega)$ 
satisfying 
$\| h\|_{L^p(\Omega)} \leq r$
and let
$G_\psi \in D M_\psi(u +h  - \Phi^\phi(u + h))$
and 
$G^\phi \in D M^\phi(u +h  - \Phi_\psi(u + h))$ be given.
Denote the open sets 
that generate $G_\psi$ and $G^\phi$ according to \cref{def:GenDerUni}
by $O_\psi$ and $O^\phi$, respectively.
Then it 
follows from
the \mbox{inclusions}
\smash{$\{S_\psi^\phi(u + h) = \psi\} \subset Z_\psi$,}
 $\{S_\psi^\phi(u + h) = \phi\} \subset Z^\phi$,
 $\{S_\psi^\phi(u + h) > \psi\} \subset O_\psi$,
and 
 $\{S_\psi^\phi(u + h) < \phi\} \subset O^\phi$
(see \cref{def:GenDerUni}, \cref{prop:impsys}, \eqref{eq:PhiDef}, and \cref{lem:sepcontact});
the properties of $\zeta$;
and the definitions in \eqref{eq:Wdefs}
that, 
for all 
 $v \in W(G_\psi)$,
$w \in W(G^\phi)$, and $z \in H_0^1(\Omega)$,
we have 
\begin{equation*}
\begin{aligned}
\left \langle J(v + w), z\right \rangle_{H_0^1(\Omega)}
&=
\left \langle v + w, \zeta z  \right \rangle_{H_0^1(\Omega)}
&&\text{by \eqref{eq:Jdef}}
\\
&=
\left ( v + w, - \Delta (\zeta z )  \right )_{H^{-1}(\Omega)}
&&\text{by \eqref{eq:H-1scalarproduct}}
\\
&=
\left ( v, - \Delta (\zeta z ) \right )_{H^{-1}(\Omega)}
&&\text{by \eqref{eq:Wdefs} and $\zeta z \in H_0^1(O^\phi)$}
\\
&=
\left ( v, - \Delta z  \right )_{H^{-1}(\Omega)}
&&\text{by \eqref{eq:Wdefs} and $(1 - \zeta) z \in H_0^1(O_\psi)$}
\\
&=
\left \langle v, z \right   \rangle_{H_0^1(\Omega)}
&&\text{by \eqref{eq:H-1scalarproduct}.}
\end{aligned}
\end{equation*}
Thus, $J(v + w) = v$ and, as a consequence,
\begin{align*}
\|J\|_{\LL(H^{-1}(\Omega), H^{-1}(\Omega))}
\|v + w\|_{H^{-1}(\Omega)}
\geq
\|v\|_{H^{-1}(\Omega)}
\quad
\forall 
 v \in W(G_\psi) ,
 w \in W(G^\phi).
\end{align*}
Due to the definition of $s_0(W(G_\psi), W(G^\phi))$ in \cref{lem:sine_angle}, this 
yields 
\begin{align*}
s_0(W(G_\psi), W(G^\phi)) \geq 
\min\big (1, \|J\|_{\LL(H^{-1}(\Omega), H^{-1}(\Omega))}^{-1}\big) > 0.
\end{align*}
As $h$, $G_\psi$, and $G^\phi$ were arbitrary, the estimate
\eqref{eq:anglebound} now follows immediately 
(with $\bar s_0 := \min(1, \|J\|_{\LL(H^{-1}(\Omega), H^{-1}(\Omega))}^{-1}) > 0$).
This completes the proof.
\end{proof}

As a consequence of 
\cref{thm:some_invertibility,lem:boundproj,lem:projidentification},
we now obtain the locally uniform invertibility in \eqref{eq:randomeq267363}.

\begin{lemma}%
\label{lem:Tinversion}%
For every $u \in L^p(\Omega)$,
there exist constants $r, C> 0$ such that,
for all 
$h \in L^p(\Omega)$ with
$\|h\|_{L^p(\Omega)} \leq r$
and for all $\GG \in \D F(\Phi(u+h),u+h)$,
the operator $\GG_1 \in \LL(\HH, \HH)$
is continuously invertible
and
\begin{equation}
\label{eq:D1_uniform_inverse}
\sup_{\GG \in \D F(\Phi(u+h),u+h)} 
\|\GG_1^{-1}\|_{\LL(\HH, \HH)}
\leq
C
\qquad \forall
h \in L^p(\Omega),
\|h\|_{L^p(\Omega)} \leq r.
\end{equation}
\end{lemma}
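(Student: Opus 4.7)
The plan is to combine the three ingredients built up in this section: the identification of the off-diagonal blocks of $\GG_1$ as orthogonal projections in $H^{-1}(\Omega)$ (\cref{lem:projidentification}), the uniform positivity of the minimal angle between the associated subspaces (\cref{lem:boundproj}), and the abstract Schur-type invertibility result (\cref{thm:some_invertibility}) applied with $H = H^{-1}(\Omega)$.

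Fix $u \in L^p(\Omega)$. First, I would invoke \cref{lem:boundproj} to extract a radius $r > 0$ and a number $\bar s_0 \in (0,1]$ such that, for every admissible perturbation $h \in L^p(\Omega)$ with $\|h\|_{L^p(\Omega)} \le r$ and every choice of
\[
G_\psi \in \D M_\psi(u+h - \Phi^\phi(u+h)), \qquad G^\phi \in \D M^\phi(u+h - \Phi_\psi(u+h)),
\]
the minimal angle estimate $s_0(W(G_\psi), W(G^\phi)) \ge \bar s_0$ holds, with the subspaces $W(G_\psi), W(G^\phi) \subset H^{-1}(\Omega)$ defined as in \eqref{eq:Wdefs}. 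By the Pythagoras-type identity \eqref{eq:pythagoras} of \cref{lem:sine_angle}, this translates into a uniform upper bound on the cosine of the minimal angle, namely $c_0(W(G_\psi), W(G^\phi)) \le \bar c_0 := \sqrt{1 - \bar s_0^2} < 1$.

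Next, using \cref{lem:projidentification}, I would rewrite every $\GG \in \D F(\Phi(u+h), u+h)$ as
\[
\GG_1 = \begin{pmatrix} \Id & P_{W(G_\psi)} \\ P_{W(G^\phi)} & \Id \end{pmatrix},
\]
i.e., precisely in the form \eqref{eq:R1def} of \cref{thm:some_invertibility} with $W_1 = W(G_\psi)$ and $W_2 = W(G^\phi)$. Since $c_0(W_1, W_2) \le \bar c_0 < 1$, the theorem applies and yields that $\GG_1$ is continuously invertible with
\[
\|\GG_1^{-1}\|_{\LL(\HH,\HH)} \le \frac{4}{1 - c_0(W(G_\psi), W(G^\phi))} \le \frac{4}{1 - \bar c_0} =: C.
\]
This constant depends neither on $h$ nor on the particular $\GG \in \D F(\Phi(u+h), u+h)$, so taking the supremum over $\GG$ gives \eqref{eq:D1_uniform_inverse}.

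The main conceptual obstacle has effectively been dispatched in advance by \cref{lem:sepcontact,lem:boundproj}, which provide the $h$-uniform separation of the coincidence sets and hence the $h$-uniform lower bound on the angle. The only care needed at this stage is the bookkeeping that matches the block structure of $\GG_1$ in \cref{def:def.4.12} with the block structure of $R_1$ in \cref{thm:some_invertibility}, and the observation that the constants $r$ and $\bar s_0$ produced by \cref{lem:boundproj} are uniform across all admissible choices of $G_\psi$ and $G^\phi$.
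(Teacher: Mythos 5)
Your proposal is correct and follows exactly the same route as the paper's proof: identify the off-diagonal blocks of $\GG_1$ as orthogonal projections via \cref{lem:projidentification}, obtain an $h$-uniform angle bound from \cref{lem:boundproj} combined with \eqref{eq:pythagoras}, and then apply \cref{thm:some_invertibility} to get the explicit bound $C = 4(1 - (1-\bar s_0^2)^{1/2})^{-1}$. The only cosmetic difference is the order in which you invoke the ingredients; the substance is identical.
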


\begin{proof}
From 
\cref{def:def.4.12,lem:projidentification}, we obtain that the 
operators $\GG_1$ appearing in $\D F(\Phi(u+h),u+h)$
have precisely the form \eqref{eq:R1def}.
From \cref{lem:boundproj}, it follows further that 
we can find a constant $r > 0$ such that 
the subspaces of $H^{-1}(\Omega)$ appearing in the 
formulas for the operators $\GG_1$
enclose an angle whose sine is bounded from below by 
a number $\bar s_0 \in (0,1]$ for all $h \in L^p(\Omega)$, 
$\|h\|_{L^p(\Omega)} \leq r$. 
By invoking \cref{thm:some_invertibility} and \eqref{eq:pythagoras}, 
the assertion of the lemma now follows immediately with
$C:= 4(1 - (1 - \bar s_0^2)^{1/2})^{-1}$. 
This completes the proof. 
\end{proof}

Having established \eqref{eq:D1_uniform_inverse}, we can turn our attention to the proof of \eqref{eq:weird_fraction}. We begin with an auxiliary result on  the regularity properties 
of the solution of the Poisson problem away from the support of the right-hand side.
\begin{lemma}%
	\label{lem:regularity}%
	Let $Z \subset \Omega$ be nonempty  and 
	compact and let $\zeta \in C_c^\infty(\Omega)$
	be given such that
	$\zeta \equiv 0$ holds on a neighborhood of $Z $.
	Identify $L^p(Z )$ with a subset of $L^p(\Omega) \subset H^{-1}(\Omega)$
	via the canonical embedding $L^p(Z ) \hookrightarrow L^p(\Omega)$.
	Then there exists a constant $C > 0$ such that
	\begin{align*}
		\norm{ \zeta (-\Delta)^{-1} f }_{W^{2,p}(\Omega)}
		\le
		C \norm{ f }_{L^1(Z)}\qquad \forall f \in L^p(Z ).
	\end{align*}
\end{lemma}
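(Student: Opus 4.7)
The plan is to combine three ingredients: the fact that $y := (-\Delta)^{-1} f$ is harmonic on a neighborhood of $\supp(\zeta)$ (because $f$ is supported in $Z$ and $\zeta$ vanishes near $Z$), so classical interior regularity for harmonic functions applies there; a duality argument that upgrades the $L^1(Z)$-norm of $f$ to an $L^1$-bound on $y$ over a nearby open set; and the product rule to extract the claimed $W^{2,p}$-estimate for $\zeta y$.

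First I would fix the geometry. Since $\zeta$ vanishes on an open neighborhood of the compact set $Z$, the sets $\supp(\zeta)$ and $Z$ are disjoint compact subsets of $\Omega$ and I can choose open sets $W', W$ with $\supp(\zeta) \subset W' \subset \overline{W'} \subset W \subset \overline{W} \subset \Omega$ and $\overline{W} \cap Z = \emptyset$. Under \cref{ass:sec6} the continuous embedding $L^p(\Omega) \hookrightarrow H^{-1}(\Omega)$ holds, so $y := (-\Delta)^{-1} f \in H_0^1(\Omega)$ is well defined, and because $\supp(f) \subset Z$ the function $y$ is distributionally harmonic on $\Omega \setminus Z$, hence in particular on $W$. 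Classical interior estimates for harmonic functions then furnish a constant $C_1 > 0$ depending only on $W, W'$, and $p$ such that $\norm{y}_{W^{2,p}(W')} \leq C_1 \norm{y}_{L^1(W)}$.

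The central step is the bound $\norm{y}_{L^1(W)} \leq C_2 \norm{f}_{L^1(Z)}$, which I would establish by duality. For arbitrary $g \in L^\infty(\Omega)$ with $\supp(g) \subset W$, let $v := (-\Delta)^{-1} g \in H_0^1(\Omega)$. Since $g$ vanishes outside $W$, the function $v$ is harmonic on a neighborhood of $Z$; combining the a priori bound $\norm{v}_{H_0^1(\Omega)} \le C \norm{g}_{L^\infty(W)}$ (which follows from the continuous embedding $L^\infty(\Omega) \hookrightarrow H^{-1}(\Omega)$ on the bounded set $\Omega$) with interior regularity then yields $\norm{v}_{L^\infty(Z)} \le C_3 \norm{g}_{L^\infty(W)}$ with $C_3$ independent of $g$. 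Integration by parts, which is licit because $y, v \in H_0^1(\Omega)$, gives
\[
\int_\Omega y g \dd x = \dual{-\Delta v}{y}_{H_0^1(\Omega)} = \innerp{v}{y}_{H_0^1(\Omega)} = \dual{-\Delta y}{v}_{H_0^1(\Omega)} = \int_Z f v \dd x,
\]
whence $\abs{\int_\Omega y g \dd x} \le C_3 \norm{g}_{L^\infty(W)} \norm{f}_{L^1(Z)}$; taking the supremum over all admissible $g$ via the $L^1$--$L^\infty$ duality on $W$ delivers the desired bound. Since $\zeta \in C_c^\infty(\Omega)$ is supported in $W'$, the product rule in $W^{2,p}$ finally yields $\norm{\zeta y}_{W^{2,p}(\Omega)} \le C_\zeta C_1 C_2 \norm{f}_{L^1(Z)}$.

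The main obstacle I anticipate is the duality step: one must check that the global integration by parts on $\Omega$ is valid despite the potential lack of any boundary regularity of $\Omega$ (handled by the fact that both $y$ and $v$ lie in $H_0^1(\Omega)$, so the Riesz identity \eqref{eq:LaplaceRiesz} applies verbatim), and one must ensure that the pointwise control of $v$ on $Z$ depends on $g$ only through $\norm{g}_{L^\infty(W)}$. The latter is secured because interior regularity for harmonic functions on an open set whose closure lies compactly inside $\Omega$ requires no hypothesis on $\partial\Omega$, making the whole argument compatible with the weak geometric assumptions of \cref{ass:sec6}.
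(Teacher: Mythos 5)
Your proof is correct and follows essentially the same route as the paper: a duality argument to bound an $L^1$-norm of $y := (-\Delta)^{-1}f$, interior regularity for harmonic functions near $\supp(\zeta)$, and the product rule. The one substantive variation is in the duality step: the paper bounds $\|y\|_{L^1(\Omega)}$ globally, using the truncation-based estimate $\|(-\Delta)^{-1}z\|_{L^\infty(\Omega)}\le C\|z\|_{L^\infty(\Omega)}$ from \cref{prop:LpLipschitz}, whereas you bound $\|y\|_{L^1(W)}$ over a compactly contained neighborhood $W$ of $\supp(\zeta)$ and control the test side via interior regularity of $v$ near $Z$ instead of a global $L^\infty$ estimate; both are valid, and your version is slightly more self-contained since it avoids \cref{prop:LpLipschitz} entirely.
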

\begin{proof}
We assume that $\zeta \not \equiv 0$. (For $\zeta \equiv 0$, the assertion is trivial.)
Let $f \in L^p( Z)$ be given
and define  $y := (-\Delta)^{-1} f \in H_0^1(\Omega)$.
	By a duality argument,
	we get
	\begin{equation}
	\label{eq:randomeq2828nh}
	\begin{aligned}
		\norm{y}_{L^1(\Omega)}
		&=
		\sup_{\norm{z}_{L^\infty(\Omega)} \le 1} \int_\Omega y z \dd x
		=
		\sup_{\norm{z}_{L^\infty(\Omega)} \le 1} \int_\Omega f (-\Delta)^{-1} z \dd x
		\\
		&\le
		\sup_{\norm{z}_{L^\infty(\Omega)} \le 1} \norm{f}_{L^1(\Omega)} \norm{(-\Delta)^{-1} z}_{L^\infty(\Omega)}
		\le
		C \norm{f}_{L^1(\Omega)},
	\end{aligned}
	\end{equation}
	where, in the last step, we used
	the estimate
	\smash{$\norm{(-\Delta)^{-1} z}_{L^\infty(\Omega)} \le C \norm{z}_{L^\infty(\Omega)}$}
	that is a special case of \cref{prop:LpLipschitz}.
	As the support $\supp(\zeta)$ of $\zeta$ and
	the set $Z$ are nonempty, 
	disjoint, and compact,  we can further find  $\varepsilon > 0$
	such that the set
	\begin{align*}
		U 
		:=
		\set{
			x \in \Omega
			\given
			\dist(x, \supp(\zeta)) < \varepsilon
		}
	\end{align*}
	does not intersect $\partial\Omega \cup Z$.
	Now, the solution $y = (-\Delta)^{-1} f \in H_0^1(\Omega)$ satisfies
	\begin{align*}
	      \innerp{y}{-\Delta v}_{L^2(\Omega)}
		=
	      \innerp{y}{v}_{H_0^1(\Omega)}
		=
		\innerp{f}{v}_{L^2(\Omega)}
		=
		0
		\qquad
		\forall
		v \in C_c^\infty(U )
		.
	\end{align*}
	By Weyl's lemma, see \cite[Proposition~2.14]{Ponce2016},
	this implies that $y$ is harmonic on $U $.
	From the mean value property of harmonic maps, it now follows
	that  the convolution $\eta \mathbin{\star} y$ 
	of $y$ with a rotationally symmetric mollifier $\eta$ of radius smaller $\varepsilon$
	satisfies $y = \eta \mathbin{\star} y$ on $\supp(\zeta)$ and that
	the partial derivatives of $y$  satisfy 
	 $\partial^\alpha y = (\partial^\alpha \eta) \mathbin{\star} y$ on $\supp(\zeta)$
	for all $\alpha \in \N_0^d$.
	Owing to Young's inequality for convolutions, this gives
	\begin{align*}
		\norm{\partial^\alpha y}_{L^\infty(\supp(\zeta))}
		\le
		\norm{y}_{L^1(U )} \norm{\partial^\alpha \eta}_{L^\infty(\R^d)}
		=
		C_\alpha \norm{y}_{L^1(U )}
		.
	\end{align*}
	Together with the product rule and \eqref{eq:randomeq2828nh},
	this finishes the proof.~
\end{proof}

Next, we prove a result that allows us to bridge the regularity gap in \eqref{eq:longequation22}.

\begin{lemma}%
\label{lem:reg_bootstrap}%
Let $u \in L^p(\Omega)$ be given
and let $r$, $Z_\psi$, and $Z^\phi$ be as in \cref{lem:sepcontact}.
Then there exist
$\zeta_\psi, \zeta^\phi \in C_c^\infty(\Omega)$
satisfying 
$0 \leq \zeta_\psi, \zeta^\phi \leq 1$ in $\Omega$,
$\zeta_\psi \equiv 1 - \zeta^\phi \equiv 0$ on a neighborhood of $Z_\psi$,
and 
$\zeta^\phi \equiv 1 - \zeta_\psi \equiv 0$ on a neighborhood of $Z^\phi$ 
such that 
\begin{equation}
\label{eq:Midentities}
\begin{aligned}
M_\psi(u + h - \Phi^\phi(u + h))
&=
M_\psi(u + h - (-\Delta)\zeta^\phi (-\Delta)^{-1}\Phi^\phi(u + h)),
\\
M^\phi(u + h - \Phi_\psi(u + h))
&=
M^\phi(u + h - (-\Delta)\zeta_\psi (-\Delta)^{-1}\Phi_\psi(u + h)),
\\
\D M_\psi(u + h - \Phi^\phi(u + h))
&=
\D M_\psi(u + h - (-\Delta)\zeta^\phi (-\Delta)^{-1}\Phi^\phi(u + h)),
\\
\D M^\phi(u + h - \Phi_\psi(u + h))
&=
\D M^\phi(u + h - (-\Delta)\zeta_\psi (-\Delta)^{-1}\Phi_\psi(u + h)),
\end{aligned}
\end{equation}
for all $h \in L^p(\Omega)$ with $\norm{h}_{L^p(\Omega)} \leq r$.
\end{lemma}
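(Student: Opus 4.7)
The plan is to construct $\zeta_\psi, \zeta^\phi$ by a standard separation argument and then to verify the four identities in \eqref{eq:Midentities} by showing that, in each identity, the unilateral obstacle problems associated with the two arguments of $M_\psi$ or $M^\phi$ admit solutions that differ only by a nonnegative ``harmonic padding'' which vanishes on the contact set of interest.

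Since $Z_\psi$ and $Z^\phi$ are disjoint, nonempty, compact subsets of $\Omega$ that are also disjoint from $\partial\Omega$, a routine Urysohn-plus-mollification argument produces cutoffs $\zeta_\psi, \zeta^\phi \in C_c^\infty(\Omega)$ with $0 \le \zeta_\psi, \zeta^\phi \le 1$ and the prescribed behavior on neighborhoods of $Z_\psi$ and $Z^\phi$; this is the first step.

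For the first identity, fix $h \in L^p(\Omega)$ with $\norm{h}_{L^p(\Omega)} \le r$ and abbreviate $g_1 := u + h - \Phi^\phi(u+h)$, $g_2 := u + h - (-\Delta)\zeta^\phi (-\Delta)^{-1}\Phi^\phi(u+h)$, and $y := (-\Delta)^{-1}\Phi^\phi(u+h) \in H_0^1(\Omega)$. Since $\Phi^\phi(u+h) \ge 0$, the maximum principle gives $y \ge 0$. The central claim is that $y_0 := S_\psi^\phi(u+h) + (1-\zeta^\phi)\,y \in H_0^1(\Omega)$ coincides with $S_\psi(g_2)$. To check this, we verify: (i) $y_0 \ge \psi$, which follows from $(1-\zeta^\phi)\,y \ge 0$ and $S_\psi^\phi(u+h) \ge \psi$; (ii) $-\Delta y_0 - g_2 = -\Phi_\psi(u+h) \ge 0$, obtained from $-\Delta S_\psi^\phi(u+h) = u+h-\Phi_\psi(u+h)-\Phi^\phi(u+h)$ and direct cancellation together with $-\Delta y = \Phi^\phi(u+h)$; and (iii) the VI $\int_\Omega (-\Phi_\psi(u+h))(v - y_0) \dd x \ge 0$ for all $v \in H_0^1(\Omega)$ with $v \ge \psi$, which holds because $\supp(\Phi_\psi(u+h)) \subset \{S_\psi^\phi(u+h) = \psi\} \subset Z_\psi$ and the condition $\zeta^\phi \equiv 1$ on a neighborhood of $Z_\psi$ forces $y_0 = S_\psi^\phi(u+h) = \psi \le v$ on this support. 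Hence $S_\psi(g_2) = y_0$, and a short rearrangement yields $M_\psi(g_1) = \Phi_\psi(u+h) = M_\psi(g_2)$.

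The derivative identity $\D M_\psi(g_1) = \D M_\psi(g_2)$ then reduces, by \cref{def:GenDerUni}, to showing that $\supp(M_\psi(g_i))$ and $\{S_\psi(g_i) > \psi\}$ coincide for $i=1,2$. The first is immediate from the identity just proved. For the second, any potential discrepancy lies inside $\{S_\psi^\phi(u+h) = \psi\} \subset Z_\psi$; but on $Z_\psi$ we have $\zeta^\phi \equiv 1$, so $(1-\zeta^\phi)\,y \equiv 0$ there and both sets equal $\{S_\psi^\phi(u+h) > \psi\}$. The second pair of identities follows by the completely symmetric argument with $\psi$ and $\phi$ interchanged and $\zeta_\psi$ in the role of $\zeta^\phi$. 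The main obstacle---and essentially the only nonroutine step---is the verification (iii): the choice $\zeta^\phi \equiv 1$ on a neighborhood of $Z_\psi$ is precisely what makes the padding $(1-\zeta^\phi)\,y$ vanish on the $\psi$-contact set and thereby preserves both admissibility and complementarity for the modified forcing $g_2$.
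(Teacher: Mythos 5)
Your proposal is correct and takes essentially the same approach as the paper: you construct a state by adding a nonnegative harmonic padding supported away from the relevant contact set and verify the obstacle-problem complementarity (the paper's $\tilde y(h) = y(h) - (1-\zeta_\psi)(-\Delta)^{-1}\lambda_\psi(h)$ for the $M^\phi$ identity is the mirror image of your $y_0 = S_\psi^\phi(u+h) + (1-\zeta^\phi)(-\Delta)^{-1}\Phi^\phi(u+h)$ for $M_\psi$). The only cosmetic difference is that the paper argues via the comparison principle and transfers the complementarity $(y-\phi)\lambda^\phi = 0$ to $\tilde y$, whereas you check the three KKT conditions for $y_0$ directly; both hinge on the same observation that the cutoff being identically one on the contact set forces the padding to vanish there.
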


\begin{proof}
As $Z_\psi$ and $Z^\phi$ are nonempty, compact, and disjoint subsets of $\Omega$,
we can find $\zeta_\psi \in C_c^\infty(\Omega)$ 
	such that $0 \le \zeta_\psi \le 1$ in $\Omega$,
	$\zeta_\psi \equiv 1$ on a neighborhood of $Z^\phi$,
	and
	$\zeta_\psi \equiv 0$ on a neighborhood of $Z_\psi$.
	Suppose that $h \in L^p(\Omega)$
	satisfying $\norm{h}_{L^p(\Omega)} \le r$ is given.
	We define \smash{$y(h) := S_\psi^\phi(u + h)$,}
	$\lambda_\psi(h) := \Phi_\psi(u + h)$,
	and
	$\lambda^\phi(h) := \Phi^\phi(u + h)$.
	Then it follows from \eqref{eq:PhiDef} that 
	$
		-\Delta y(h) = u + h - \lambda_\psi(h) - \lambda^\phi(h).
	$
	Let $\tilde y(h) \in H_0^1(\Omega)$ be the solution of 
	\begin{align*}
		-\Delta \tilde y(h) = u + h - (-\Delta)\zeta_\psi(-\Delta)^{-1}\lambda_\psi(h) - \lambda^\phi(h).
	\end{align*}
	Then it holds
	\begin{align*}
		y(h) - \tilde y(h)
		=
		- (1-\zeta_\psi) (-\Delta)^{-1} \lambda_\psi(h)
		.
	\end{align*}
	In particular, $y(h) = \tilde y(h)$ a.e.\ on $Z^\phi$ since $\zeta_\psi \equiv 1$ on $Z^\phi$.
	Moreover, the comparison principle 
	and the inequality $\lambda_\psi(h) \le 0$ a.e.\ in $\Omega$ imply 
	that $y(h) - \tilde y(h) \ge 0$ a.e.\ in $\Omega$. Thus, $\tilde y(h) \leq y(h) \leq \phi$ a.e.\ in $\Omega$.
	From \eqref{eq:randomsys26365465-1d}, we further get $(y(h) - \phi) \lambda^\phi(h) = 0 $
	a.e.\ in $\Omega$.
	Since $\set{y(h) = \phi} \subset Z^\phi$, $y(h) = \tilde y(h)$ a.e.\ on $Z^\phi$,
	and $\supp(\lambda^\phi(h) ) \subset \set{y(h) = \phi}$,
	this implies
	$(\tilde y(h) - \phi) \lambda^\phi(h) = 0 $
	a.e.\ on $\Omega$.
	Thus,
	\begin{align*}
		\lambda^\phi(h) \geq 0 \text{ a.e.\ in }\Omega,
		\qquad 
		\tilde y(h) - \phi \leq 0 \text{ a.e.\ in } \Omega,
		\qquad
		(\tilde y(h) - \phi)\lambda^\phi(h) = 0 \text{ a.e.\ in } \Omega.
	\end{align*}
	Together with the definition of $\tilde y(h)$,
	it follows that
	\begin{equation}
	\label{eq:tilde y}
	\begin{aligned}
		\tilde y(h)  &= S^\phi( u + h - (-\Delta)\zeta_\psi(-\Delta)^{-1}\lambda_\psi(h) ),
		\\
		\lambda^\phi(h) &= M^\phi( u + h - (-\Delta)\zeta_\psi(-\Delta)^{-1}\lambda_\psi(h) ).
	\end{aligned}	
	\end{equation}
	Due to the definitions $\lambda_\psi(h) := \Phi_\psi(u + h)$
	and
	$\lambda^\phi(h) := \Phi^\phi(u + h)$
	and the identity 
	\begin{align*}
	\Phi^\phi(u + h) = 
	\max(0, M_\psi^\phi(u+h)) = M^\phi \left ( u + h - \Phi_\psi(u + h) \right )
	\end{align*}
	obtained from \eqref{eq:PhiDef} and \cref{prop:impsys}, this proves the 
	second line of \eqref{eq:Midentities}.
	Since $\set{y(h) = \phi} \subset Z^\phi$, $y(h) = \tilde y(h)$ on $Z^\phi$,
	and $\tilde y(h) \leq y(h) \leq \phi$ in $\Omega$,
	we further have $\set{y(h) = \phi} = \set{\tilde y(h) = \phi}$
	and, as a consequence, $\set{y(h) < \phi} = \set{\tilde y(h) < \phi}$.
	In view of \cref{def:GenDerUni}, \eqref{eq:tilde y}, and again \cref{prop:impsys}, this yields
	\begin{align*}
	\D S^\phi(u + h - \lambda_\psi(h) )
	& = 
	\big \{
\SSS(O)
\,\left |~
O \text{ open and }
\set{y(h) < \phi} \subset O \subset \Omega \setminus \supp\left (\lambda^\phi(h)\right )
\big \}\right.
\\
	&= \big \{
\SSS(O)
\,\left |~
O \text{ open and }
\set{\tilde y(h) < \phi} \subset O \subset \Omega \setminus \supp\left (\lambda^\phi(h)\right )
\big \}\right.
\\
	& = 
	\D S^\phi(u + h - (-\Delta)\zeta_\psi(-\Delta)^{-1}\lambda_\psi(h)).
	\end{align*}
	Due to \cref{def:GenDerUni}, this establishes the fourth line of \eqref{eq:Midentities}.
	The assertions for $M_\psi$ in the first and the third line of \eqref{eq:Midentities} 
	are proved completely analogously. 
\end{proof}

By combining \cref{lem:regularity,lem:reg_bootstrap}, we now obtain 
\eqref{eq:weird_fraction} and, ultimately, the Newton differentiability of $\Phi$
as a function from $L^p(\Omega)$ to $\HH$.

\begin{proposition}%
\label{prop:semismoothPhi}%
The function 
$\Phi$
in \eqref{eq:PhiDef}
is Newton differentiable 
as a map from $L^p(\Omega)$
to
$\HH$
with Newton derivative 
\begin{align}
\label{eq:DPhiDef}
\D\Phi\colon L^p(\Omega)
\rightrightarrows
\LL(L^p(\Omega),\HH),
\quad
u \mapsto
 \{ - \GG_1^{-1}\GG_2 \mid \GG \in \D F(\Phi(u),u)\}.
\end{align}
\end{proposition}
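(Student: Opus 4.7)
The plan is to follow the classical implicit-function-theorem style argument sketched in \eqref{eq:longequation22} and to bridge the $L^p$-to-$H^{-1}$ regularity gap that makes the naive approach fail by combining the bootstrap identity of \cref{lem:reg_bootstrap} with the projection structure of the Newton derivatives exhibited in \cref{lem:projidentification}. The uniform invertibility \eqref{eq:D1_uniform_inverse} of \cref{lem:Tinversion} together with $F(\Phi(\cdot),\cdot) \equiv 0$ from \cref{lem:1dPhiCont} reduces the claim to proving \eqref{eq:weird_fraction}, i.e., that for fixed $u \in L^p(\Omega)$ and $h \to 0$ in $L^p(\Omega)$ the residual
\[
E_1 := -\bigl[ M_\psi(u + h - \Phi^\phi(u+h)) - M_\psi(u - \Phi^\phi(u)) \bigr] - G_\psi\bigl[ \Phi^\phi(u+h) - \Phi^\phi(u) - h \bigr]
\]
and its symmetric counterpart $E_2$ (with $\psi \leftrightarrow \phi$) are $o(\|h\|_{L^p(\Omega)})$ in $H^{-1}(\Omega)$, uniformly in the choice of $G_\psi \in \D M_\psi(u+h-\Phi^\phi(u+h))$ and $G^\phi \in \D M^\phi(u+h-\Phi_\psi(u+h))$.

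The obstacle is that the Newton differentiability of $M_\psi$ only operates from $L^p(\Omega)$ to $H^{-1}(\Omega)$, whereas $\Phi^\phi(u+h) - \Phi^\phi(u)$ is merely Lipschitz controlled in $L^1(\Omega)$ by $\|h\|_{L^p(\Omega)}$ (via the Lipschitz property of $M_\psi^\phi\colon L^p(\Omega) \to L^1(\Omega)$ in \cref{lem:regularity_again}). To bridge this gap, I would apply \cref{lem:reg_bootstrap}: both $M_\psi(u+h-\Phi^\phi(u+h))$ and $\D M_\psi(u+h-\Phi^\phi(u+h))$ are unchanged when $\Phi^\phi(u+h)$ is replaced by the smoothed surrogate $\tilde\lambda^\phi(h) := (-\Delta)\zeta^\phi(-\Delta)^{-1}\Phi^\phi(u+h)$. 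By \cref{lem:regularity} (applied with $Z = Z^\phi$, which contains $\supp \Phi^\phi(u+h)$ by \cref{prop:impsys} and \cref{lem:sepcontact}) and the $L^p \to L^1$ Lipschitz bound on $M_\psi^\phi$, this surrogate satisfies $\|\tilde\lambda^\phi(h) - \tilde\lambda^\phi(0)\|_{L^p(\Omega)} \le C\|h\|_{L^p(\Omega)}$. Setting $\tilde u_0 := u - \tilde\lambda^\phi(0)$ and $\tilde h := h - (\tilde\lambda^\phi(h) - \tilde\lambda^\phi(0))$, the bootstrap yields $G_\psi \in \D M_\psi(\tilde u_0 + \tilde h)$ and $\|\tilde h\|_{L^p(\Omega)} \le (1+C)\|h\|_{L^p(\Omega)} \to 0$, so the Newton differentiability of $M_\psi$ at $\tilde u_0$ produces
\[
\bigl\| M_\psi(u+h-\Phi^\phi(u+h)) - M_\psi(u-\Phi^\phi(u)) - G_\psi \tilde h \bigr\|_{H^{-1}(\Omega)} = o(\|h\|_{L^p(\Omega)}).
\]

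The final step, and the crux of the argument, is to identify $G_\psi \tilde h$ with $G_\psi[h - (\Phi^\phi(u+h) - \Phi^\phi(u))]$. A direct computation using the identity $\Id - (-\Delta)\zeta^\phi(-\Delta)^{-1} = (-\Delta)(1-\zeta^\phi)(-\Delta)^{-1}$ shows that the difference of these two expressions equals $-G_\psi (-\Delta)\bigl[(1-\zeta^\phi)(-\Delta)^{-1}(\Phi^\phi(u+h) - \Phi^\phi(u))\bigr]$. From \cref{lem:sepcontact} the support of $\Phi^\phi(u+h) - \Phi^\phi(u)$ lies in $Z^\phi$; from the inclusion $\{S_\psi^\phi(u+h) > \psi\} \supset \Omega \setminus Z_\psi$ and \cref{def:GenDerUni} one has $\Omega \setminus O_\psi \subset Z_\psi$; and by construction $1 - \zeta^\phi \equiv 0$ on a neighborhood of $Z_\psi$. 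Consequently the function $(1-\zeta^\phi)(-\Delta)^{-1}(\Phi^\phi(u+h) - \Phi^\phi(u))$ vanishes on a neighborhood of $\Omega \setminus O_\psi$ and hence lies in $H_0^1(O_\psi)$, so its image under $-\Delta$ belongs to $(-\Delta)H_0^1(O_\psi) = W(G_\psi)^\perp$. By \cref{lem:projidentification} the operator $G_\psi$ is the orthogonal projection onto $W(G_\psi)$ and therefore annihilates this image, giving $G_\psi \tilde h = G_\psi[h - (\Phi^\phi(u+h) - \Phi^\phi(u))]$ and hence $\|E_1\|_{H^{-1}(\Omega)} = o(\|h\|_{L^p(\Omega)})$. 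The analogous treatment of $E_2$ (using $\zeta_\psi$, $Z_\psi$, and $W(G^\phi)$) together with \eqref{eq:D1_uniform_inverse} completes the proof. The genuinely hard part, which motivates every preceding lemma of \cref{sec:6}, is exactly this last cancellation: the bootstrap trick introduces a perturbation that is only $L^1$-small, not $L^p$-small, and it is the fact that the Newton derivatives of $M_\psi$ and $M^\phi$ are orthogonal projections whose kernels contain precisely this perturbation that allows the argument to close.
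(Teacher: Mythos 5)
Your proof is correct and follows essentially the same route as the paper's: reduce to the residual estimate \eqref{eq:weird_fraction} via \cref{lem:Tinversion} and \cref{lem:1dPhiCont}; replace $\Phi^\phi(u+h)$ by the smoothed surrogate $(-\Delta)\zeta^\phi(-\Delta)^{-1}\Phi^\phi(u+h)$ using \cref{lem:reg_bootstrap} and \cref{lem:regularity} to regain an $L^p$-bound; and close the gap by observing that the remainder $(-\Delta)[(1-\zeta^\phi)(-\Delta)^{-1}(\Phi^\phi(u+h)-\Phi^\phi(u))]$ lies in $(-\Delta)H_0^1(O_\psi)=W(G_\psi)^\perp$ and is therefore annihilated by the projection $G_\psi$. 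The only cosmetic difference is that you phrase this cancellation via \cref{lem:projidentification} and the orthogonal-complement description of $W(G_\psi)$, whereas the paper writes it directly through the identity $\SSS(O_\psi)(-\Delta)v = v$ for $v\in H_0^1(O_\psi)$ and $G_\psi = \Id - (-\Delta)\SSS(O_\psi)$; these are the same argument.
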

\begin{proof}
Let $u \in L^p(\Omega)$ be given
and let $r > 0$ be chosen such that the assertions of 
\cref{lem:sepcontact,lem:Tinversion,lem:reg_bootstrap}
hold for some 
$Z_\psi, Z^\phi\subset \Omega$,  $C>0$, and  $\zeta_\psi, \zeta^\phi \in C_c^\infty(\Omega)$.
Suppose further that $h \in L^p(\Omega)$ with $0 < \|h\|_{L^p(\Omega)} \leq r$ is 
given.
Then we obtain from the arguments in the first half of 
\eqref{eq:longequation22}
and \cref{def:def.4.12} that
\begin{equation}
\label{eq:randomeq3737}
\begin{aligned}
&
\sup_{G \in \D \Phi(u + h)} 
	\frac{\norm{ \Phi(u + h) - \Phi(u) - G h }_\HH}{\norm{h}_{L^p(\Omega)}}
	\\&
	\leq
	C  \sup_{\GG \in \D F(\Phi(u+h),u+h)} 
	\frac{\norm{ \GG_1 (\Phi(u + h) - \Phi(u) )+ \GG_2 h }_\HH}{\norm{h}_{L^p(\Omega)}}
	\\&
	=
	C \hspace{-0.15cm} \sup_{\substack{
G_\psi \in D M_\psi(u +h  - \Phi^\phi(u + h)),
\\ 
G^\phi \in D M^\phi(u +h  - \Phi_\psi(u + h))}}\hspace{-0.15cm}
	\frac{\left \| 
		\begin{pmatrix}
			\Phi_\psi(u + h) - \Phi_\psi(u) +  G_\psi(\Phi^\phi(u + h) - \Phi^\phi(u) - h  )
			\\
			\Phi^\phi(u + h) - \Phi^\phi(u) +  G^\phi(\Phi_\psi(u + h) - \Phi_\psi(u) - h  )
		\end{pmatrix}	
	\right \|_\HH}
	{\norm{h}_{L^p(\Omega)}}
	\end{aligned}
\end{equation}
holds.
Let us again define $\lambda^\phi(h):= \Phi^\phi(u + h)$, $\lambda^\phi(0) := \Phi^\phi(u)$,
and  \smash{$y(h) := S_\psi^\phi(u + h)$}.
Due to
\cref{lem:regularity,lem:reg_bootstrap}, \eqref{eq:PhiDef},
and
\cref{prop:impsys},
we have 
\begin{equation*}
\Phi_\psi (u + h) - \Phi_\psi(u)
= M_\psi( u + h - \lambda^\phi(h)) - M_\psi(u - \lambda^\phi(0))  
=
     M_\psi( \tilde u + \tilde g(h)) - M_\psi(\tilde u )  
\end{equation*}
and
\begin{align*}
\D M_\psi(u + h - \lambda^\phi(h))
=
\D M_\psi(\tilde u + \tilde g(h))
\end{align*}
with 
\begin{align*}
\tilde u &:= u -  (-\Delta)\zeta^\phi(-\Delta)^{-1}\lambda^\phi(0) \in L^p(\Omega),
\\
\tilde g(h) &:=
h + 
(-\Delta)\zeta^\phi(-\Delta)^{-1} ( \lambda^\phi(0) - \lambda^\phi(h)) \in L^p(\Omega).
\end{align*}
For every $G_\psi \in \D M_\psi(u + h - \lambda^\phi(h))$ with 
generating set $O_\psi$ as in \cref{def:GenDerUni},
we further obtain from  $1 - \zeta^\phi \equiv 0$ on 
$Z_\psi$, $\{y(h) > \psi\} \subset O_\psi$,
and $\{y(h) = \psi\} \subset Z_\psi$ that
$( 1 - \zeta^\phi)(-\Delta)^{-1} ( \lambda^\phi(h) - \lambda^\phi(0)) \in H_0^1(O_\psi)$ and, thus, 
\begin{align*}
 \SSS(O_\psi)(-\Delta)( 1 - \zeta^\phi)(-\Delta)^{-1} ( \lambda^\phi(h)  - \lambda^\phi(0) )
 = ( 1 - \zeta^\phi)(-\Delta)^{-1} ( \lambda^\phi(h)  - \lambda^\phi(0) ).
\end{align*}
Due to $G_\psi = \Id - (-\Delta)\SSS(O_\psi)$,  this yields 
\begin{align*}
G_\psi( \lambda^\phi(h) - \lambda^\phi(0) - h)
&=
G_\psi(-\Delta)\zeta^\phi(-\Delta)^{-1}(\lambda^\phi(h) - \lambda^\phi(0)) - G_\psi h
\\
&=
- G_\psi  \tilde g(h). 
\end{align*}
If we put all of the above together, then it follows that 
\begin{equation}
\label{eq:randomeq273uzh}
\begin{aligned}
&\sup_{
G_\psi \in D M_\psi(u +h  - \Phi^\phi(u + h))} 
 \left \| 
\Phi_\psi (u + h) - \Phi_\psi(u)  +  G_\psi\left (\Phi^\phi(u + h) - \Phi^\phi(u) - h\right ) 
	\right \|_{H^{-1}(\Omega)}
	\\
&=
\sup_{
G_\psi \in \D M_\psi(\tilde u + \tilde g(h))} 
 \left \| 
  M_\psi( \tilde u + \tilde g(h)) - M_\psi(\tilde u )  
  -
 G_\psi\tilde g(h)
	\right \|_{H^{-1}(\Omega)}.
\end{aligned}
\end{equation}
Note that, due to the Lipschitz continuity of 
$\Phi^\phi$ from  
$L^p(\Omega)$ to $L^1(\Omega)$ (see \cref{lem:regularity_again} and \eqref{eq:PhiDef}),
the fact that 
$\lambda^\phi(h) - \lambda^\phi(0)
=
\Phi^\phi(u + h) - \Phi^\phi(u)$ is supported only on $Z^\phi$,
and \cref{lem:regularity},
  we know that there exist  constants $C_1, C_2 > 0$ such that 
\begin{align*}
\|\tilde g(h)\|_{L^p(\Omega)}
&\leq
\| h \|_{L^p(\Omega)}
+
\| (-\Delta)\zeta^\phi(-\Delta)^{-1} ( \lambda^\phi(0) - \lambda^\phi(h)) \|_{L^p(\Omega)}
\\
&\leq 
\| h \|_{L^p(\Omega)}
+
C_1
\| \lambda^\phi(0) - \lambda^\phi(h) \|_{L^1(\Omega)}
\\
&\leq
(1 + C_1 C_2)
\| h \|_{L^p(\Omega)}.
\end{align*}
In combination with the Newton differentiability of 
$M_\psi$ as a function from $L^p(\Omega)$ to $H^{-1}(\Omega)$,
\eqref{eq:randomeq273uzh}, and the fact that 
$h$ was arbitrary, 
 this allows us to conclude that 
\begin{equation*}
\begin{aligned}
&\sup_{
G_\psi \in D M_\psi(u +h  - \Phi^\phi(u + h))} 
 \frac{\left \| 
\Phi_\psi (u + h) - \Phi_\psi(u)  +  G_\psi\left (\Phi^\phi(u + h) - \Phi^\phi(u) - h\right ) 
	\right \|_{H^{-1}(\Omega)}}{\|h\|_{L^p(\Omega)}}
	\\
&\leq 
(1 + C_1 C_2)
\sup_{
G_\psi \in \D M_\psi(\tilde u + \tilde g(h))} 
 \frac{\left \| 
  M_\psi( \tilde u + \tilde g(h)) - M_\psi(\tilde u )  
  -
 G_\psi\tilde g(h)
	\right \|_{H^{-1}(\Omega)}}{\|\tilde g(h)\|_{L^p(\Omega)}}
	\to 0
\end{aligned}
\end{equation*}
holds for $(0, r] \ni \|h\|_{L^p(\Omega)} \to 0$, where, in the case $\tilde g(h) = 0$,
the second fraction is to be understood as zero. 
By proceeding analogously with the second component in \eqref{eq:randomeq3737}
and by plugging everything in, we now obtain that
\[
\sup_{G \in \D \Phi(u + h)} 
	\frac{\norm{ \Phi(u + h) - \Phi(u) - G h }_\HH}{\norm{h}_{L^p(\Omega)}}
	\to 0
	\quad\text{ for } 0 <  \|h\|_{L^p(\Omega)} \to 0
\]
holds as desired. This completes the proof. 
\end{proof}

As $\Phi$ is the function that maps $u \in L^p(\Omega)$
to the positive and the negative part of the multiplier 
\smash{$M_\psi^\phi(u) \in L^p(\Omega) \subset H^{-1}(\Omega)$}
and since the description of 
$\D \Phi$ in \eqref{eq:DPhiDef} is cumbersome, 
\cref{prop:semismoothPhi} is not well suited for applications. 
To refine it, we can use \eqref{eq:Pdef}. Doing so yields the following 
second main result of this paper which shows that
the solution mapping \smash{$S_\psi^\phi$}
of the bilateral obstacle problem is Newton differentiable 
with a nicely describable Newton derivative in the situation of \cref{ass:sec6}.

\begin{theorem}[$H_0^1$-Newton differentiability for the bilateral obstacle problem]%
	\label{th:semismoothH01bilateral}%
	The operator  $S_\psi^\phi$ is Newton differentiable 
	as a function $S_\psi^\phi\colon L^p(\Omega) \to H_0^1(\Omega) \cap L^q(\Omega)$
	for all $1 \leq q < \infty$ with the derivative  
	$\smash{\D S_\psi^\phi\colon L^p(\Omega) \rightrightarrows} \LL(L^p(\Omega), H_0^{1}(\Omega))$
	given by
\begin{align}
\label{eq:DSphipsiDef}
\D S_\psi^\phi(u) := 
\Big \{
\SSS(O)
\, \Big |~
O &\text{ open and } 
\set{\psi < S_\psi^\phi(u) < \phi } \subset O \subset \Omega \setminus \supp\big (M_\psi^\phi(u)\big ) 
\Big\}.
\end{align}
Here, the inactive set 
is
defined w.r.t.\ the 
$C(\Omega)$-representatives of $S_\psi^\phi(u)$, $\psi$, and $\phi$,
and the symbol $\supp(\cdot)$ denotes the support in the sense of Borel measures on $\Omega$.
\end{theorem}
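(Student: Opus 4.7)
The plan is to derive \cref{th:semismoothH01bilateral} from \cref{prop:semismoothPhi} by exploiting the relation $-\Delta S_\psi^\phi(u) = u - \Phi_\psi(u) - \Phi^\phi(u)$ implied by \cref{prop:impsys} and \eqref{eq:PhiDef}. Composing \cref{prop:semismoothPhi} with the continuous summation operator $(\Id,\Id)\colon \HH \to H^{-1}(\Omega)$ yields that $\Phi_\psi + \Phi^\phi$ is Newton differentiable from $L^p(\Omega)$ to $H^{-1}(\Omega)$ with derivative at $u$ composed of operators of the form $(\Id,\Id)(-\GG_1^{-1}\GG_2)$ for $\GG \in \D F(\Phi(u),u)$. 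Using \cref{lem:projidentification} to identify $G_\psi$ and $G^\phi$ as the $H^{-1}(\Omega)$-orthogonal projections onto the closed subspaces $W(G_\psi)$ and $W(G^\phi)$, and invoking the formula \eqref{eq:Pdef} from \cref{thm:some_invertibility}, one computes that $(\Id,\Id)(-\GG_1^{-1}\GG_2) = (\Id,\Id)\GG_1^{-1}(G_\psi,G^\phi)^\top = P$, where $P$ is the $H^{-1}(\Omega)$-orthogonal projection onto $W(G_\psi) + W(G^\phi)$. Since $-\Delta\colon H_0^1(\Omega) \to H^{-1}(\Omega)$ is an isometric isomorphism, this yields Newton differentiability of $S_\psi^\phi\colon L^p(\Omega) \to H_0^1(\Omega)$ with derivative $(-\Delta)^{-1}(\Id - P)$.

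Next, I would identify $(-\Delta)^{-1}(\Id - P)$ with $\SSS(O_\psi \cap O^\phi)$. By \eqref{eq:Wdefs} and the closedness of $-\Delta H_0^1(O_\psi)$ and $-\Delta H_0^1(O^\phi)$ in $H^{-1}(\Omega)$, one has $W(G_\psi)^\perp = -\Delta H_0^1(O_\psi)$ and $W(G^\phi)^\perp = -\Delta H_0^1(O^\phi)$, so $(W(G_\psi) + W(G^\phi))^\perp = -\Delta(H_0^1(O_\psi) \cap H_0^1(O^\phi)) = -\Delta H_0^1(O_\psi \cap O^\phi)$, where the last identity follows from the characterization of $H_0^1(O)$ via quasi-continuous representatives vanishing on the complement of an open $O$. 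The operator $(-\Delta)^{-1}(\Id - P)$ thus coincides with $\SSS(O_\psi \cap O^\phi)$. To realize an arbitrary $O$ appearing in \eqref{eq:DSphipsiDef} as such an intersection, I would invoke \cref{lem:sepcontact} to obtain disjoint compact sets $Z_\psi, Z^\phi \subset \Omega$ containing $\{S_\psi^\phi(u) = \psi\} \supset \supp(\Phi_\psi(u))$ and $\{S_\psi^\phi(u) = \phi\} \supset \supp(\Phi^\phi(u))$, and select disjoint open neighborhoods $V_\psi \supset Z_\psi$, $V^\phi \supset Z^\phi$ with $V^\phi \cap Z_\psi = \emptyset$ and $V_\psi \cap Z^\phi = \emptyset$. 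Setting $O_\psi := O \cup V^\phi$ and $O^\phi := O \cup V_\psi$ then produces $O_\psi \cap O^\phi = O \cup (V^\phi \cap V_\psi) = O$, and both $O_\psi, O^\phi$ can be checked to be admissible in the sense of \cref{def:GenDerUni} for $S_\psi$ and $S^\phi$ at the shifted arguments $u - \Phi^\phi(u)$ and $u - \Phi_\psi(u)$, respectively.

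For the additional $L^q(\Omega)$-Newton differentiability with $1 \le q < \infty$, I would argue by interpolation. Combining the Lipschitz continuity of $S_\psi^\phi\colon L^p(\Omega) \to C_0(\Omega)$ from \cref{lem:regularity_again} with a uniform Stampacchia $L^p$-to-$L^\infty$ estimate $\|\SSS(O)h\|_{L^\infty(\Omega)} \le C\|h\|_{L^p(\Omega)}$ (valid since $p > d/2$, with $C$ depending only on $d, p, |\Omega|$ and not on the open set $O \subset \Omega$) yields a uniform bound $\|r\|_{L^\infty(\Omega)} \le C\|h\|_{L^p(\Omega)}$ for the residual $r := S_\psi^\phi(u+h) - S_\psi^\phi(u) - \SSS(O)h$. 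Coupling this with $\|r\|_{H_0^1(\Omega)}/\|h\|_{L^p(\Omega)} \to 0$ from the first step and Poincar\'e's inequality, the H\"older interpolation $\|r\|_{L^q(\Omega)} \le \|r\|_{L^2(\Omega)}^{2/q}\|r\|_{L^\infty(\Omega)}^{1-2/q}$ for $q \ge 2$ together with the embedding $L^2(\Omega) \hookrightarrow L^q(\Omega)$ for $q \in [1,2)$ then deliver the conclusion. The main obstacle will be the realization argument of the second paragraph: showing that every $O$ in \eqref{eq:DSphipsiDef} can be written as an intersection of open sets that simultaneously satisfy both unilateral admissibility conditions requires a careful use of \cref{lem:sepcontact} together with a verification of all the relevant support and inclusion conditions.
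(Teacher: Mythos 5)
Your proposal is correct and follows essentially the same route as the paper's proof: derive Newton differentiability of $M_\psi^\phi$ from \cref{prop:semismoothPhi}, identify $G_\psi$ and $G^\phi$ as $H^{-1}(\Omega)$-orthogonal projections via \cref{lem:projidentification}, apply the projection-onto-the-sum formula of \cref{thm:some_invertibility}, and match the resulting orthogonal complement $[-\Delta H_0^1(O_\psi\cap O^\phi)]^\perp$ to $\SSS(O)$. The only (harmless) deviations are that the paper realizes $O$ more directly by choosing $O_\psi := O \cup \set{S_\psi^\phi(u) > \psi}$ and $O^\phi := O \cup \set{S_\psi^\phi(u) < \phi}$ (so no auxiliary disjoint neighborhoods of $Z_\psi, Z^\phi$ are needed), and it obtains the $L^q$-part from the dominated convergence theorem rather than H\"older interpolation -- both equivalent in substance given the uniform $L^p(\Omega)$-to-$L^\infty(\Omega)$ bound on $\set{\SSS(O)}$.
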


\begin{proof}
From \cref{prop:semismoothPhi} and \eqref{eq:PhiDef}, we obtain that 
\smash{$M_\psi^\phi\colon L^p(\Omega) \to H^{-1}(\Omega)$} is Newton differentiable 
with derivative
\begin{align}
\label{eq:DMpisphiDef}
\D M_\psi^\phi(u)
:=
\left \{ - \begin{pmatrix}
			\Id & \Id
\end{pmatrix}\GG_1^{-1}\GG_2 \mid \GG \in \D F(\Phi(u),u)
\right \}.
\end{align}
Suppose now that an open set $O$ satisfying the conditions in 
\eqref{eq:DSphipsiDef} is given and define
\begin{align}
\label{defOpsiOphiBi}
O_\psi := O \cup \set{ S_\psi^\phi(u) > \psi}
\qquad
\text{and}
\qquad 
 O^\phi := O \cup \set{ S_\psi^\phi(u) < \phi}.
\end{align}
Then $O_\psi$ and $O^\phi$  satisfy the conditions in \cref{def:GenDerUni}
and give rise to operators 
$G_\psi \in \D M_\psi(u - \Phi^\phi(u))$ 
and 
$G^\phi \in \D M^\phi (u - \Phi_\psi(u))$, respectively;
cf.\ \cref{prop:impsys}.

Due to \cref{lem:projidentification}, 
$G_\psi$ and $G^\phi$ are precisely
the orthogonal projections 
\smash{$P_{W(G_\psi)}$} and \smash{$P_{W(G^\phi)}$}
onto the spaces 
$W(G_\psi)$
and 
$W(G^\phi)$ in \eqref{eq:Wdefs}.
In combination with \cref{def:def.4.12}, \eqref{eq:DMpisphiDef},
\cref{lem:boundproj}, and \cref{thm:some_invertibility},
 this yields 
	\begin{align*}
		\D M_\psi^\phi(u)
		&\ni 
		\begin{pmatrix}
			\Id & \Id
\end{pmatrix}
		\begin{pmatrix}
			\Id & P_{W(G_\psi)}
			\\
			P_{W(G^\phi)} & \Id
		\end{pmatrix}^{-1}
		\begin{pmatrix}
			P_{W(G_\psi)}
			\\
			P_{W(G^\phi)}
		\end{pmatrix} = P_{W(G_\psi) + W(G^\phi)},
	\end{align*}
	where \smash{$P_{W(G_\psi) + W(G^\phi)}$}
	denotes the orthogonal projection onto $W(G_\psi) + W(G^\phi)$.
	From \eqref{eq:Wdefs}, \cite[Theorem~4.5]{HeinonenKilpelaeinenMartio1993},
	and \eqref{defOpsiOphiBi}, it follows further that
	\begin{align*}
		W(G_\psi) + W(G^\phi)
		&=
		\bracks*{- \Delta H_0^1(O_\psi) }^\perp
		+
		\bracks*{- \Delta H_0^1(O^\phi) }^\perp
		\\
		&=
		\bracks*{- \Delta \parens*{H_0^1(O_\psi) \cap H_0^1(O^\phi)} }^\perp
		\\
		&=
		\bracks*{- \Delta  H_0^1(O_\psi \cap O^\phi) }^\perp
		\\
		&= \bracks*{- \Delta H_0^1(O) }^\perp.
	\end{align*}
       This shows that \smash{$\Id - P_{W(G_\psi) + W(G^\phi)}$} is the orthogonal 
       projection onto \smash{$- \Delta  H_0^1(O)$} and, in view of \eqref{eq:H-1scalarproduct}
       and \cref{def:PoissonSolutionOps}, that
       \smash{$(-\Delta)^{-1}( \Id - P_{W(G_\psi) + W(G^\phi)})= \SSS(O)$}.
       If we combine all of the above, then we obtain that
       $\SSS(O) \in (-\Delta)^{-1}( \Id - \D M_\psi^\phi(u))$.
       Since \smash{$S_\psi^\phi = (-\Delta)^{-1}(\Id - M_\psi^\phi)$}
       holds by
       the definition of \smash{$M_\psi^\phi$}, the Newton differentiability 
       of the operator \smash{$S_\psi^\phi $} as a function from $L^p(\Omega)$ to $H_0^1(\Omega)$
       with the derivative in \eqref{eq:DSphipsiDef} now follows immediately 
       from the Newton differentiability properties of \smash{$M_\psi^\phi$}. It remains 
       to prove that the operator \smash{$S_\psi^\phi $} is also Newton differentiable as a function from 
       $L^p(\Omega)$ to $L^q(\Omega)$ for all $1 \leq q < \infty$. This, however, 
       follows straightforwardly from the definition of Newton differentiability,
       the fact that \smash{$S_\psi^\phi$} is globally Lipschitz from $L^p(\Omega)$
       to $L^\infty(\Omega)$ by \cref{lem:regularity_again}, 
       the fact that
       $\{\SSS(O) \mid O \subset \Omega \text{ nonempty, open}\}$ is a bounded subset of $\LL(L^p(\Omega), L^\infty(\Omega))$
       (which can easily be checked by means of \cref{def:PoissonSolutionOps} and the classical truncation 
       arguments in \cite[Theorem B.2]{KinderlehrerStampacchia1980}), 
       the Newton differentiability of \smash{$S_\psi^\phi$} as a map from $L^p(\Omega)$ to $H_0^1(\Omega)$,
	and 
       the 
       dominated convergence theorem; cf.\ the arguments in 
       the proof of \cref{th:semismoothH01unilateral}.~
\end{proof}

\begin{remark}~
\begin{itemize}
\item By reversing the calculations in the proof of \cref{th:semismoothH01bilateral}, one, of course, 
also obtains that the map \smash{$M_\psi^\phi$} is Newton differentiable as a 
function from $L^p(\Omega)$ to $H^{-1}(\Omega)$ with Newton derivative 
\smash{$\D M_\psi^\phi(u) := \Id - (-\Delta)\D S_\psi^\phi(u)$}.
\item The Newton derivative $\D S_\psi^\phi(u) $ in \eqref{eq:DSphipsiDef}
contains the elements of $\partial_B^{sw}S_\psi^\phi(u)$
identified in \cite[Theorem 6.2]{RaulsUlbrich2021} as special cases. 
Using the implicit function 
approach developed in this section, it might be possible to show that all 
elements of the set $\D S_\psi^\phi(u)$ in \eqref{eq:DSphipsiDef} are, in fact, generalized derivatives 
in the strong-weak Bouligand sense.  We leave this topic for future research.
\end{itemize}
\end{remark}

 \section{An application example from optimal control}
\label{sec:7}
We conclude this paper with an application example from optimal control which illustrates how \cref{th:semismoothH01bilateral}
can be used in practice. As the following analysis parallels that in \cite[sections 5, 6]{ChristofWachsmuth2023},
we do not aim for the highest level of generality and keep the discussion concise. 
The model problem that we consider reads as follows:
\begin{equation}
\label{eq:M}
\tag{\textup{M}}
	\left \{\,\,
	\begin{aligned}
		\text{Minimize} \quad & \frac12 \norm*{ y - y_D}_{L^2(\Omega)}^2 + 
		\frac{\nu}{2}\|u\|_{H_0^1(\Omega)}^2\\
		\text{w.r.t.}\quad &u, y \in H_0^1(\Omega), \\
		\text{s.t.}\quad & \mathopen{}-\Delta y  = u \text{ and } \psi \leq u \leq \phi \text{ in }\Omega.
	\end{aligned}
	\right.
\end{equation}
\begin{assumption}[standing assumptions for \cref{sec:7}]~
\begin{itemize}
\item $\Omega \subset \R^d$, $d \in \{1,2,3\}$, is a bounded Lipschitz domain;
\item $\nu > 0$ and $y_D \in L^2(\Omega)$ are given;
\item $\psi$ and $\phi$ satisfy
$\psi, \phi \in C(\bar\Omega) \cap H^1(\Omega)$ and $\Delta\psi, \Delta\phi \in L^2(\Omega)$, 
and there exists a number $c > 0$ such that \eqref{eq:constant_c} holds.
\end{itemize}
\end{assumption}

Note that the standing assumptions of \cref{sec:6} hold in the above situation with $p=2$; 
cf.\ \cref{rem:sec6:LipschitzDomain}.
From standard arguments, we obtain the following theorem.

\begin{theorem}
Problem \eqref{eq:M} possesses a unique optimal control $\bar u$ with associated 
state $\bar y := (-\Delta)^{-1}\bar u$. The optimal state $\bar y$ is uniquely characterized by the equation
\begin{equation}
\label{eq:MFONC}
\bar y - (-\Delta)^{-1} S_\psi^\phi \parens*{\nu^{-1} (-\Delta)^{-1}( y_D - \bar y)} = 0,
\end{equation}
where $S_\psi^\phi\colon H^{-1}(\Omega) \to H_0^1(\Omega)$ denotes the solution map of \eqref{eq:VI}.
\end{theorem}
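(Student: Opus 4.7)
The plan is a textbook direct-method-plus-first-order-conditions argument, followed by a rewriting of the optimality variational inequality in a form that identifies the minimizer with an obstacle-problem solution. Eliminating the state via the control-to-state map reduces \eqref{eq:M} to the minimization of
\[
J(u) := \tfrac{1}{2}\norm{(-\Delta)^{-1}u - y_D}_{L^2(\Omega)}^2 + \tfrac{\nu}{2}\norm{u}_{H_0^1(\Omega)}^2
\]
over the admissible set $K := \{u \in H_0^1(\Omega)\mid \psi \le u \le \phi\text{ a.e.\ in }\Omega\}$. The set $K$ is convex and strongly (hence weakly) closed in $H_0^1(\Omega)$, and nonempty by \cref{lem:min_in_hnulleins} (take $\min(\phi,\max(0,\psi)) \in K$). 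Since $\nu > 0$, the functional $J$ is continuous, strictly convex, and coercive on $H_0^1(\Omega)$, so the direct method yields a unique minimizer $\bar u \in K$, and one sets $\bar y := (-\Delta)^{-1}\bar u$.

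For the characterization, I would compute the Gâteaux derivative
\[
J'(\bar u)(h) = \int_\Omega (\bar y - y_D)\,(-\Delta)^{-1}h\,\dd x + \nu\innerp{\bar u}{h}_{H_0^1(\Omega)}
\]
and introduce the adjoint state $q := (-\Delta)^{-1}(y_D - \bar y) \in H_0^1(\Omega)$, which is well-defined since $y_D - \bar y \in L^2(\Omega) \hookrightarrow H^{-1}(\Omega)$. The self-adjointness of $(-\Delta)^{-1}$ on $L^2(\Omega)$ gives
\[
\int_\Omega (\bar y - y_D)\,(-\Delta)^{-1}h\,\dd x = -\int_\Omega q\, h \,\dd x,
\]
so that the necessary and sufficient optimality condition $J'(\bar u)(v - \bar u) \geq 0$ for all $v \in K$ becomes
\[
\nu\innerp{\bar u}{v-\bar u}_{H_0^1(\Omega)} - \int_\Omega q\,(v - \bar u)\,\dd x \geq 0 \qquad \forall v \in K.
\]
Setting $w := \nu^{-1}q \in H_0^1(\Omega)\hookrightarrow L^2(\Omega) \hookrightarrow H^{-1}(\Omega)$, the left-hand side above equals $\nu \dual{-\Delta\bar u - w}{v - \bar u}_{H_0^1(\Omega)}$. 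Hence $\bar u$ satisfies the variational inequality defining $S_\psi^\phi(w)$, so $\bar u = S_\psi^\phi(\nu^{-1}(-\Delta)^{-1}(y_D - \bar y))$; applying $(-\Delta)^{-1}$ yields \eqref{eq:MFONC}.

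For the unique-characterization claim, I would reverse the chain: any $\bar y \in H_0^1(\Omega)$ satisfying \eqref{eq:MFONC} produces a control $\bar u := -\Delta \bar y$ that lies in $K$ (because $S_\psi^\phi$ maps into $K$) and fulfills the above optimality VI, so strict convexity of $J$ forces $\bar u$ to coincide with the unique minimizer and $\bar y$ with the optimal state. No step presents a real obstacle; the only point requiring care is the bookkeeping of the $H^{-1}$--$L^2$--$H_0^1$ dualities when matching the optimality VI with the defining VI of $S_\psi^\phi$, which is exactly why the authors can describe the argument as proceeding ``from standard arguments.''
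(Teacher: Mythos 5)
Your proof is correct and fills in exactly the ``standard arguments'' that the paper delegates to \cite[Proposition 5.2, Equation (5.4)]{ChristofWachsmuth2023}: reduce to the control, apply the direct method with strict convexity and coercivity from $\nu > 0$, compute the Gâteaux derivative, introduce the adjoint state $q = (-\Delta)^{-1}(y_D - \bar y)$ via self-adjointness of $(-\Delta)^{-1}$ on $L^2(\Omega)$, and match the resulting optimality variational inequality with the defining VI of $S_\psi^\phi$. This is the same route as the paper's cited argument, so there is no genuine divergence to compare.
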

\begin{proof}
The unique solvability follows from the direct method of the calculus of variations and 
the strict convexity of \eqref{eq:M}. Equation \eqref{eq:MFONC} is obtained from the exact same arguments 
as in \cite[section 5]{ChristofWachsmuth2023}; see \cite[Proposition 5.2, Equation (5.4)]{ChristofWachsmuth2023}.
\end{proof}

The next lemma shows that \eqref{eq:MFONC} is amenable to a semismooth Newton method.

\begin{lemma}%
\label{lem:leMMa}%
 Let $\D S_\psi^\phi$ be defined as in \eqref{eq:DSphipsiDef}. The function 
\begin{align*}
Q\colon L^2(\Omega) \to L^2(\Omega),
\qquad
y \mapsto  y - (-\Delta)^{-1} S_\psi^\phi \parens*{\nu^{-1} (-\Delta)^{-1}( y_D - y)},
\end{align*}
is Newton differentiable with derivative 
\begin{align*}
\D Q(y) := \Id + \nu^{-1}(-\Delta)^{-1} \D S_\psi^\phi\left [\nu^{-1} (-\Delta)^{-1}(y_D - y) \right ](-\Delta)^{-1} \subset 
\LL(L^2(\Omega), L^2(\Omega)).
\end{align*}
Further, for all $y \in L^2(\Omega)$ and $G \in \D Q(y)$, 
$G^{-1}$ exists and $\|G^{-1}\|_{\LL(L^2(\Omega), L^2(\Omega))} \leq 1$.
\end{lemma}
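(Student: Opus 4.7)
The plan is to verify the lemma in two stages. First, I will establish Newton differentiability of $Q$ via a chain-rule-type argument built on \cref{th:semismoothH01bilateral}. Second, I will prove invertibility of every $G \in \D Q(y)$ together with the uniform bound $\|G^{-1}\|_{\LL(L^2(\Omega), L^2(\Omega))} \le 1$ through a positivity argument exploiting the fact that the solution operators $\SSS(O)$ arising in \eqref{eq:DSphipsiDef} are self-adjoint and positive semidefinite as operators on $L^2(\Omega)$.

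For the first stage, I would abbreviate $A(y) := \nu^{-1}(-\Delta)^{-1}(y_D - y)$ so that $Q(y) = y - (-\Delta)^{-1} S_\psi^\phi(A(y))$, and fix an exponent $p > \max(1, d/2)$ for which the embedding $H_0^1(\Omega) \hookrightarrow L^p(\Omega)$ is available; this is possible for every $d \in \{1,2,3\}$ (e.g., $p = 6$ for $d = 3$). Then $A$ is affine and continuous from $L^2(\Omega)$ into $L^p(\Omega)$, and \cref{th:semismoothH01bilateral} applies. For $y \in L^2(\Omega)$, $h \in L^2(\Omega) \setminus \{0\}$, and any $\tilde G \in \D S_\psi^\phi(A(y+h))$, the identity $A(y+h) - A(y) = -\nu^{-1}(-\Delta)^{-1} h$ gives, after a short rearrangement,
\begin{equation*}
Q(y+h) - Q(y) - G h
=
-(-\Delta)^{-1}\bigl[\,S_\psi^\phi(A(y+h)) - S_\psi^\phi(A(y)) - \tilde G\,(A(y+h) - A(y))\,\bigr],
\end{equation*}
where $G := \Id + \nu^{-1}(-\Delta)^{-1} \tilde G (-\Delta)^{-1} \in \D Q(y+h)$. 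Combining the continuity of $(-\Delta)^{-1}$ as a map from $H_0^1(\Omega) \subset H^{-1}(\Omega)$ into $L^2(\Omega)$, the estimate $\|A(y+h) - A(y)\|_{L^p(\Omega)} \le C \|h\|_{L^2(\Omega)}$, and the defining Newton-differentiability property of $S_\psi^\phi\colon L^p(\Omega) \to H_0^1(\Omega)$, one obtains the required estimate \eqref{eq:NewtonDifDef} for $Q$ at $y$.

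For the second stage, each $G \in \D Q(y)$ has the form $G = \Id + \nu^{-1}(-\Delta)^{-1} \SSS(O) (-\Delta)^{-1}$ for some admissible open $O$ with $\SSS(O) \in \D S_\psi^\phi(A(y))$. I would observe that $\SSS(O)$, viewed as an operator on $L^2(\Omega)$, is self-adjoint and positive semidefinite: for $u_1, u_2 \in L^2(\Omega)$ and $\delta_i := \SSS(O) u_i \in H_0^1(O)$, \cref{def:PoissonSolutionOps} yields $\innerp{\delta_i}{v}_{H_0^1(\Omega)} = \innerp{u_i}{v}_{L^2(\Omega)}$ for every $v \in H_0^1(O)$; testing with $v = \delta_{3-i}$ gives $\innerp{u_1}{\SSS(O) u_2}_{L^2(\Omega)} = \innerp{\delta_1}{\delta_2}_{H_0^1(\Omega)} = \innerp{\SSS(O) u_1}{u_2}_{L^2(\Omega)}$ and, in particular, $\innerp{u}{\SSS(O) u}_{L^2(\Omega)} = \|\SSS(O) u\|_{H_0^1(\Omega)}^2 \ge 0$. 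Since $(-\Delta)^{-1}$ is likewise self-adjoint and positive on $L^2(\Omega)$, the conjugated operator $\nu^{-1}(-\Delta)^{-1}\SSS(O)(-\Delta)^{-1}$ is self-adjoint and positive semidefinite, whence $\innerp{h}{Gh}_{L^2(\Omega)} \ge \|h\|_{L^2(\Omega)}^2$ for every $h \in L^2(\Omega)$. Invertibility of $G$ and the bound $\|G^{-1}\|_{\LL(L^2(\Omega), L^2(\Omega))} \le 1$ follow by Lax--Milgram (or the spectral theorem).

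The main obstacle is the chain-rule step: one must track carefully that the norm gap between the input space $L^2(\Omega)$ and the space $L^p(\Omega)$ on which $S_\psi^\phi$ is Newton differentiable is bridged by the continuity of $(-\Delta)^{-1}\colon L^2(\Omega) \to L^p(\Omega)$, while the gap between $H_0^1(\Omega)$ (the target of $S_\psi^\phi$) and $L^2(\Omega)$ is bridged by the post-composition with $(-\Delta)^{-1}$. Once this bookkeeping is in place, the remainder is a routine consequence of \cref{th:semismoothH01bilateral} and standard Hilbert-space arguments.
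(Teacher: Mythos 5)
Your proof follows the paper's approach exactly: a chain-rule argument through \cref{th:semismoothH01bilateral} for the Newton differentiability, then positivity of $\SSS(O)$ on $L^2(\Omega)$ combined with the Lax--Milgram lemma for the uniform invertibility, and the computations you carry out are precisely those the paper gives (or leaves implicit). The only quibble is your exemplar choice $p=6$ for $d=3$: the standing assumptions of \cref{sec:7} guarantee only $\Delta\psi,\Delta\phi\in L^2(\Omega)$, so \cref{th:semismoothH01bilateral} is available with $p=2$ (which already suffices since $2>\max(1,d/2)$ for $d\le 3$, and is what the paper uses) but not with $p=6$ unless one additionally assumes higher regularity of the obstacles.
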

\begin{proof}
The Newton differentiability of $Q$ as a function from $L^2(\Omega)$ to $L^2(\Omega)$
with derivative $\D Q$
follows from the estimate $2> \max(1,d/2)$,
the linearity and continuity of 
$(-\Delta)^{-1}\colon L^2(\Omega) \to H_0^1(\Omega) \hookrightarrow L^2(\Omega)$,
and \cref{th:semismoothH01bilateral}.
To see that the elements of $\D Q(y)$
are invertible for all $y \in L^2(\Omega)$, we note that \eqref{eq:DSphipsiDef} and \cref{def:PoissonSolutionOps}  imply  that,
for every $G \in \D Q(y)$, there exists an open nonempty set $O \subset \Omega$ such that 
\begin{align*}
\left (z, Gz  \right )_{L^2(\Omega)}
&=
\|z\|_{L^2(\Omega)}^2
+
\nu^{-1}
\left (z, (-\Delta)^{-1} \SSS(O)(-\Delta)^{-1} z \right )_{L^2(\Omega)}
\\
&=
\|z\|_{L^2(\Omega)}^2
+
\nu^{-1}
\left ((-\Delta)^{-1} z,  \SSS(O)(-\Delta)^{-1} z \right )_{L^2(\Omega)}
\\
&=
\|z\|_{L^2(\Omega)}^2
+
\nu^{-1}
\left (\SSS(O)(-\Delta)^{-1} z,  \SSS(O)(-\Delta)^{-1} z \right )_{H_0^1(\Omega)}
\\
&\geq 
\|z\|_{L^2(\Omega)}^2\qquad \forall z \in L^2(\Omega).
\end{align*}
In combination with the lemma of Lax--Milgram, this establishes the claim. 
\end{proof}

The fact that the lemma of Lax--Milgram can be used in the situation of \eqref{eq:MFONC} 
to establish the invertibility of the Newton derivatives is the main reason why we have formulated the 
first-order optimality condition of \eqref{eq:M} in $y$ and not in $u$.  
\Cref{lem:leMMa} shows that it makes sense to consider a semismooth Newton method for 
the solution of \eqref{eq:MFONC}. The resulting algorithm can be seen in \cref{alg:semiNewton}. 

\begin{Algorithm}[semismooth Newton method for the solution of \eqref{eq:M}]\label{alg:semiNewton}
~\hspace{-10cm}
\begin{algorithmic}[1]
  \STATE Choose an initial guess $y_0 \in L^2(\Omega)$ and a tolerance $\texttt{tol} \geq 0$.
    \FOR{$i = 0,1,2,3,\ldots$}
    \STATE Calculate $z_i :=(-\Delta)^{-1}( y_D - y_i)/\nu$, $u_i := S_\psi^\phi (z_i)$, and $\tilde y_i := (-\Delta)^{-1}u_i$.
        \IF{$\|y_i - \tilde y_i\|_{L^2(\Omega)} \leq \texttt{tol}$}
            \STATE STOP the iteration (convergence reached). 
        \ELSE
        \STATE 
           \label{algo1:step:7}
            Choose an open set $O$ 
            satisfying 
            $\set{\psi < S_\psi^\phi(z_i) < \phi } \subset O \subset \Omega \setminus \supp\big (M_\psi^\phi(z_i)\big )$ 
            with \smash{$S_\psi^\phi$} and \smash{$M_\psi^\phi$} as in \cref{th:semismoothH01bilateral}
            and determine $y_{i+1} \in L^2(\Omega)$ by solving 
        \begin{align*}
        	y_{i + 1} + \nu^{-1} (-\Delta)^{-1}\SSS(O)(-\Delta)^{-1}y_{i+1} 
        	= \tilde y_i + \nu^{-1}  (-\Delta)^{-1}\SSS(O)(-\Delta)^{-1}y_i.
        \end{align*}
        \ENDIF
    \ENDFOR
\end{algorithmic}
\end{Algorithm}

From the standard convergence analysis for the semismooth Newton method, 
we now obtain the following result.

\begin{theorem}[local $q$-superlinear convergence]%
\label{th:NewtonConvergence}%
Let $\bar u$ be the optimal control of \eqref{eq:M} and $\bar y := (-\Delta)^{-1}\bar u$ the associated optimal state.
There exists $r > 0$ such that, for every $y_0 \in L^2(\Omega)$ with 
$\|y_0 - \bar y\|_{L^2(\Omega)} < r$,
\cref{alg:semiNewton} with $\texttt{tol} = 0$ either terminates after finitely many steps with $\bar y$ or 
produces a sequence $\{y_i\} \subset L^2(\Omega)$ that converges 
$q$-superlinearly to $\bar y$ in $L^2(\Omega)$. 
\end{theorem}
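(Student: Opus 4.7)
The plan is to reinterpret \cref{alg:semiNewton} as the classical semismooth Newton iteration applied to the nonsmooth equation $Q(y) = 0$ from \cref{lem:leMMa}, and then to invoke the standard Newton-convergence argument based on \eqref{eq:NewtonDifDef} together with the uniform invertibility of the Newton derivative $\D Q$.

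First, I would unfold the algorithm. Setting $G_i := \Id + \nu^{-1}(-\Delta)^{-1}\SSS(O)(-\Delta)^{-1}$ for the open set $O$ selected in \cref{algo1:step:7}, \cref{lem:leMMa} and the definition of $\D Q$ yield $G_i \in \D Q(y_i)$. Noting that $Q(y_i) = y_i - \tilde y_i$ by the very definitions of $z_i$ and $\tilde y_i$, the linear equation defining $y_{i+1}$ rearranges to
\begin{equation*}
G_i y_{i+1} = \tilde y_i + (G_i - \Id) y_i = G_i y_i - Q(y_i),
\end{equation*}
so, using that $G_i$ is continuously invertible by \cref{lem:leMMa}, we get the standard update $y_{i+1} = y_i - G_i^{-1} Q(y_i)$. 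In particular, the linear system in \cref{algo1:step:7} is always uniquely solvable, a valid choice of $O$ always exists (the inactive set $\{\psi < S_\psi^\phi(z_i) < \phi\}$ itself qualifies by \cref{th:semismoothH01bilateral}), and the termination criterion $y_i = \tilde y_i$ is equivalent to $Q(y_i) = 0$, which by \eqref{eq:MFONC} forces $y_i = \bar y$.

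Next, I would derive the error recursion. Since $Q(\bar y) = 0$ by \eqref{eq:MFONC}, subtracting $\bar y$ from the Newton update and using $Q(\bar y) = 0$ gives
\begin{equation*}
y_{i+1} - \bar y = -G_i^{-1}\bigl(Q(y_i) - Q(\bar y) - G_i(y_i - \bar y)\bigr).
\end{equation*}
With $h_i := y_i - \bar y$ and the uniform bound $\|G_i^{-1}\|_{\LL(L^2(\Omega), L^2(\Omega))} \leq 1$ from \cref{lem:leMMa}, this yields
\begin{equation*}
\|h_{i+1}\|_{L^2(\Omega)} \leq \|Q(\bar y + h_i) - Q(\bar y) - G_i h_i\|_{L^2(\Omega)}.
\end{equation*}

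Finally, I would close the loop via Newton differentiability. By \cref{lem:leMMa} and \eqref{eq:NewtonDifDef}, for every $\varepsilon > 0$ there exists $r_\varepsilon > 0$ such that
\begin{equation*}
\sup_{G \in \D Q(\bar y + h)} \frac{\|Q(\bar y + h) - Q(\bar y) - G h\|_{L^2(\Omega)}}{\|h\|_{L^2(\Omega)}} \leq \varepsilon \quad \text{whenever } 0 < \|h\|_{L^2(\Omega)} < r_\varepsilon.
\end{equation*}
Choosing $\varepsilon = 1/2$ and $r := r_{1/2}$, the recursion above gives $\|h_{i+1}\|_{L^2(\Omega)} \leq \tfrac{1}{2}\|h_i\|_{L^2(\Omega)}$ whenever $0 < \|h_i\|_{L^2(\Omega)} < r$, so by induction the iterates either hit $\bar y$ exactly in finitely many steps or remain in the open ball of radius $r$ around $\bar y$ and converge to $\bar y$ in $L^2(\Omega)$. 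The $q$-superlinear rate is then immediate: applying the supremum estimate above with $\varepsilon \to 0$ along $h = h_i \to 0$ shows $\|h_{i+1}\|_{L^2(\Omega)} / \|h_i\|_{L^2(\Omega)} \to 0$. There is no real obstacle beyond verifying the algorithmic identification $y_{i+1} = y_i - G_i^{-1} Q(y_i)$; everything else is the textbook semismooth Newton argument, which is applicable here precisely because of the energy-space Newton differentiability result \cref{th:semismoothH01bilateral} and the favorable structure $\D Q(y) = \Id + (\text{positive semidefinite})$ exploited in \cref{lem:leMMa}.
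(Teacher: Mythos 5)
Your proposal is correct and takes essentially the same route as the paper: identify the update in \cref{algo1:step:7} with $G_i(y_{i+1}-y_i)=-Q(y_i)$ for $G_i\in\D Q(y_i)$, then invoke the uniform bound $\|G_i^{-1}\|\le 1$ and the Newton differentiability of $Q$ from \cref{lem:leMMa}. The only difference is that the paper delegates the final convergence argument to a citation of the standard semismooth Newton theorem, whereas you unfold that argument explicitly (error recursion, ball invariance via $\varepsilon=1/2$, superlinear rate via $\varepsilon\to 0$), which is fine and adds no new ideas beyond what the cited result supplies.
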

\begin{proof}
	The update step  in \cref{alg:semiNewton} is equivalent to
	$G_i (y_{i+1} - y_i) = - Q(y_i)$ with $G_i \in \D Q(y_i)$ and with $Q$ and $\D Q$ as in \cref{lem:leMMa}.
	From \cref{lem:leMMa}, we further know that $Q$ is Newton differentiable from 
	$L^2(\Omega)$ to $L^2(\Omega)$ with derivative $\D Q$ and that all  $G$ in the image of $\D Q$ are
	invertible with $\|G^{-1}\|_{\LL(L^2(\Omega), L^2(\Omega))} \leq 1$. 
	With this information at hand,
	the local $q$-superlinear convergence of the iterates $\{y_i\}$ in $L^2(\Omega)$ to $\bar y$ follows 
	completely analogously to, e.g., the proof of
	\cite[Theorem 3.4]{ChenNashedQi2000}.~
\end{proof}

The results that are obtained when piecewise linear finite elements
are used to discretize  
\eqref{eq:M} 
and a semismooth Newton method along the lines of \cref{alg:semiNewton} is applied to 
the resulting finite-dimensional problem (analogously to \cite[section~6, Algorithm~6.1]{ChristofWachsmuth2023})
can be seen in \cref{tab:1} and \cref{fig:1}. Here, we considered the situation
$\Omega = (0,1)^2$, 
$\nu = 10^{-5}$, $y_D(x_1, x_2) = 10(-x_1 - x_2 + 1)$,  $\psi(x_1, x_2) = -5$,  $\phi(x_1, x_2) = 5$,  
$y_0 \equiv 0$, and $\texttt{tol} = 10^{-12}$;
the underlying meshes were Friedrichs--Keller triangulations of width $h$; 
and the inactive set was used as the set $O$
in step \ref{algo1:step:7} of the algorithm.
As \cref{tab:1} shows, 
the number of iterations that our Newton method requires to drive the residue below
 $\texttt{tol} = 10^{-12}$ does not change in our experiment as $h$ tends to zero. 
This so-called mesh-independence is characteristic for algorithms
whose convergence can be proved not only in the discrete but also in the infinite-dimensional setting.
\begin{table}[H]
\caption{Number of Newton iterations needed to drive the $L^2(\Omega)$-residue below $\texttt{tol} = 10^{-12}$
for various $h$.}
\label{tab:1}
\centering
\begin{tabular}{c |c | c | c | c | c | c | c}
\noalign{\smallskip}
mesh width $h$ &  $\frac{1}{32}$ & $\frac{1}{64}$  & $\frac{1}{128}$   & $\frac{1}{256}$   & $\frac{1}{512}$  & $\frac{1}{1024}$  \\
\noalign{\smallskip}\hline\noalign{\smallskip}
number of iterations  & $2$ & $2$ & $2$ &  $2$ & $2$ & $2$
\end{tabular}\pagebreak

\end{table}
\begin{figure}[ht]
\centering 
\subfigure{\includegraphics[width=5.9cm,height=7cm,keepaspectratio]{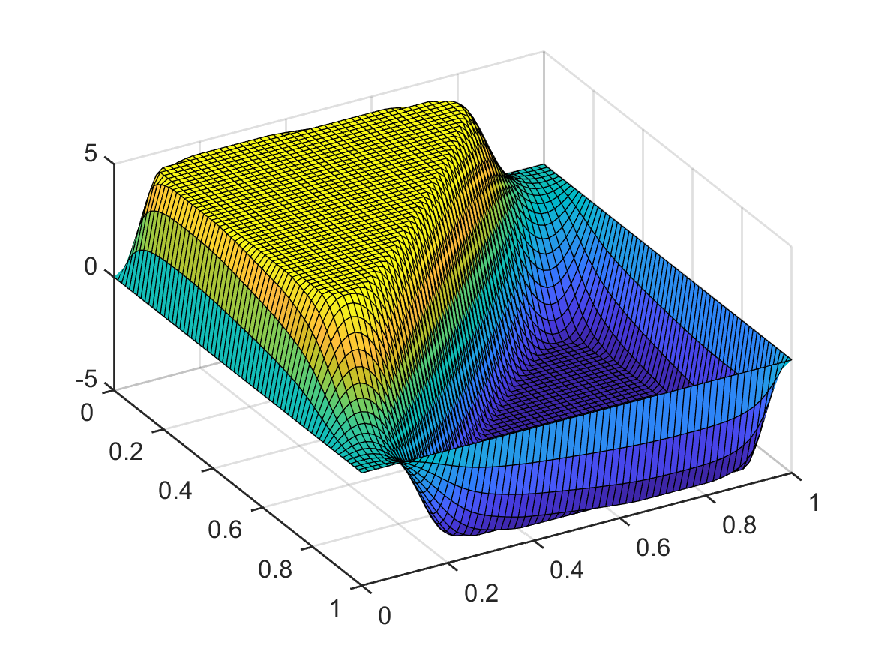}} %
\hspace{0.5cm}%
\subfigure{\includegraphics[width=5.9cm,height=7cm,keepaspectratio]{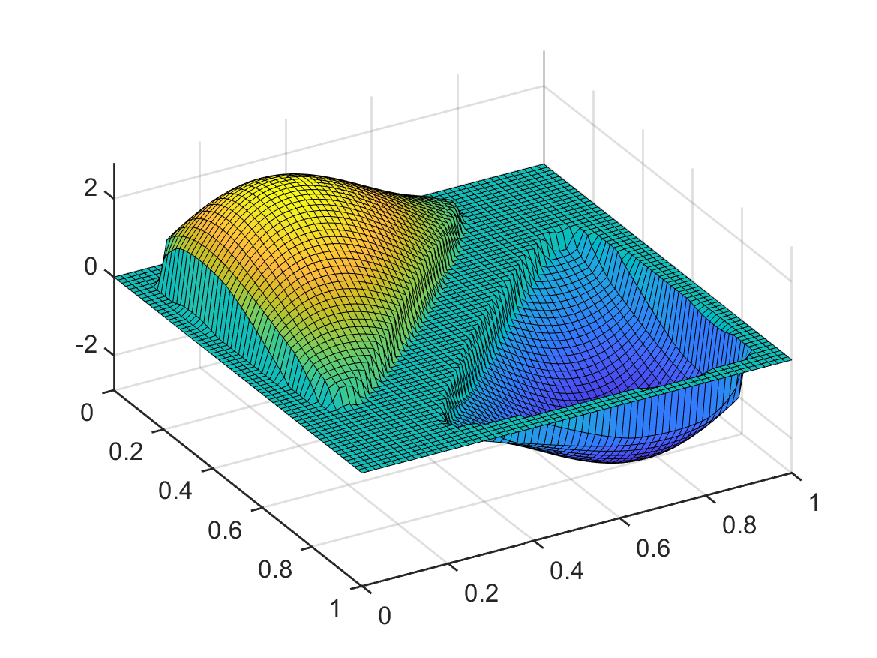}} %
\caption{Optimal control (left) and associated
Lagrange multiplier of the bi-obstacle problem (right) obtained from a finite element discretization of 
\cref{alg:semiNewton} with mesh width $h = 1/64$. }
\label{fig:1}
\end{figure}
% font 12, box line width 1.0

%%%%%%%%%%%%%%%%%%%%%%%%%%%%%%%%%%%%%%%%%%%%%%

\bibliographystyle{siamplain}
\bibliography{references}

%%%%%%%%%%%%%%%%%%%%%%%%%%%%%%%%%%%%%%%%%%%%%%

\end{document}